\pgfplotsset{compat=1.18} 
\DeclareMathOperator{\var}{Var}
\DeclareMathOperator{\cov}{Cov}
\numberwithin{equation}{section}
\newtheorem{lemma}{Lemma}[section]
\newtheorem{theorem}[lemma]{Theorem}
\newtheorem{rem}[lemma]{Remark}
\newtheorem{coro}[lemma]{Corollary}
\newtheorem{prop}[lemma]{Proposition}
\newcommand{\Nz}{\mathcal{N}^{\mathrm{z}}_\rho}
\newcommand{\Nc}{\mathcal{N}^{\mathrm{c}}_\rho}
\newcommand{\Ncp}{\mathcal{N}^{\mathrm{c},+}_\rho}
\newcommand{\Ncm}{\mathcal{N}^{\mathrm{c},-}_\rho}
\DeclareMathOperator{\jac}{Jac}
\newcommand{\Ordo}{\mathrm{O}}
\newcommand{\OrdoP}{\mathrm{O}_{\mathbb{P}}}
\newcommand{\E}{\mathbb{E}}
\title[Local repulsion between zeros and critical points of the GEF]{Local repulsion between zeros and critical points of the Gaussian Entire Function}
\keywords{Gaussian entire function, zeros, critical points, local correlations, repulsion, repulsion factor}
\subjclass[2020]{60G60, 60G15, 30D20, 60G55, 60G70}
\author[A. Haimi]{Antti Haimi}
\address[A. Haimi]{Faculty of Science and Engineering, Åbo Akademi University,
Tuomiokirkontori 3
20500 Turku, Finland}
\email{antti.haimi@abo.fi}
\author[L. Odelius]{Lukas Odelius}
\address[L. Odelius]{Faculty of Mathematics, University of Vienna, Oskar-Morgenstern-Platz 1, A-1090 Vienna, Austria}
\email{lukas.odelius@univie.ac.at}
\author[J. L. Romero]{Jos\'{e} Luis Romero}
\address[J. L. Romero]{Faculty of Mathematics, University of Vienna, Oskar-Morgenstern-Platz 1, A-1090 Vienna, Austria, and Acoustics Research Institute, Austrian Academy of Sciences, Dr. Ignaz Seipel-Platz 2,	AT-1010 Vienna, Austria}
\email{jose.luis.romero@univie.ac.at}
\thanks{This research was funded in whole or in part by the Austrian Science Fund (FWF): 10.55776/Y1199. For open access purposes, the authors have applied a CC BY public copyright license to any author-accepted manuscript version arising from this submission.}
\begin{document}
\begin{abstract}
We study the zeros and critical points of different indices of the standard Gaussian entire function on the complex plane (whose zero set is stationary). We provide asymptotics for the second order correlations of all the corresponding number statistics on small observation disks, showing various rates of local repulsion. The results have consequences for signal processing, as they show extremely strong repulsion between the local maxima and zeros of spectrograms of noise computed with respect to Gaussian windows.
\end{abstract}
\maketitle
\section{Introduction and results}
\subsection{Results}
We study the zeros of the random \emph{Gaussian entire function} (GEF)
\begin{align}\label{eq_gef}
G(z) = \sum_{n=0}^\infty \frac{\xi_n}{\sqrt{n!}} z^n, \qquad z \in \mathbb{C},
\end{align}
where $\xi_n$ are independent standard complex random variables \cite{MR1783614, gafbook, NSwhat}, and its \emph{covariant derivative}
\begin{align}\label{eq_F}
F(z) = \bar{\partial}^* G(z) = \bar{z} G(z) - \partial G(z).
\end{align}
The zeros of $F(z)$ are called \emph{critical points} of $G$,
and are instrumental in the analysis of heuristic or approximate models in string theory \cite{MR2104882}. From the point of view of complex geometry, $G$ is a random holomorphic section to a standard line bundle on the plane and the non-analyticity of $F$ is an effect of the Gaussian metric.

While the conformality of $G$ implies that its zeros have non-negative winding numbers, the critical points of $G$ can be further classified according to their \emph{index}:
\begin{align}\label{eq_index}
\mathrm{Index}(F,z) := \mathrm{sgn} \jac F(z),
\end{align}
where $\jac F(z)$ is the Jacobian determinant of $F$ (considered as a function of two real variables).

Though not obvious, the zeros and critical points of $G$ of a certain index define jointly stationary point processes (see Sections \ref{sec_rel} and \ref{sec_sim}). We shall be interested in the following statistics:
\begin{align}\label{eq_stats}
\begin{aligned}
\Nz &= \#\{z \in \mathbb{C}: G(z)=0, |z|<\rho\},
\\
\Nc &= \#\{z \in \mathbb{C}: F(z)=0, |z|<\rho\},
\\
\Ncp &= \#\{z \in \mathbb{C}: F(z)=0, \,\mathrm{Index}(F,z)=1 ,|z|<\rho\},
\\
\Ncm &= \#\{z \in \mathbb{C}: F(z)=0, \,\mathrm{Index}(F,z)=-1 ,|z|<\rho\}.
\end{aligned}
\end{align}
(Critical points of index zero almost surely do not occur.) The first order moments of \eqref{eq_stats} are
\begin{align}
\mathbb{E} \Nz = \rho^2; \qquad
\mathbb{E} \Nc = \frac{5}{3} \rho^2; \qquad
\mathbb{E} \Ncp = \frac{4}{3} \rho^2; \qquad
\mathbb{E} \Ncm = \frac{1}{3} \rho^2,
\end{align}
see, e. g, \cite{gafbook}, \cite[Corollary 5]{MR2104882} and \cite[Section 6.8]{hkr22}. Our main result concerns local correlations between the quantities \eqref{eq_stats} and reads as follows.
\begin{theorem}\label{th_main}
Let $G$ be the Gaussian entire function \eqref{eq_gef} and $F=\bar{\partial}^*G$ its covariant derivative, cf. \eqref{eq_F}. Then we have the following asymptotics, valid for $0<\rho<1$:
\begin{align}\label{eq_repz}
    \mathbb{E}[\Nz \cdot (\Nz - 1)] \asymp \rho^6,
\end{align}    
\begin{align}\label{eq_repc}
    \mathbb{E}[\Nc \cdot (\Nc -1)] \asymp \rho^4
    \quad\mbox{and}\quad
    \mathbb{E}[\Nc \cdot (\Nc -1)] \sim \frac{6}{25}(\mathbb{E}[\Nc])^2
    \mbox{ as } \rho \to 0^+,
\end{align}
\begin{align}\label{eq_repp}
    \mathbb{E}[\Ncp \cdot (\Ncp - 1)] \asymp \rho^7,
\end{align}
\begin{align} \label{eq_repm}
    \mathbb{E}[\Ncm \cdot (\Ncm - 1)] \asymp \rho^7,
\end{align}
\begin{align} \label{eq_cmcp}
    \mathbb{E}[\Ncm \cdot \Ncp] \asymp \rho^4
    \quad\mbox{and}\quad
    \mathbb{E}[\Ncm \cdot \Ncp] \sim \frac{3}{4} \cdot \mathbb{E}[\Ncm] \cdot \mathbb{E}[\Ncp]
    \mbox{ as } \rho \to 0^+,
\end{align}
\begin{align} \label{eq_zc}
    \mathbb{E}[\Nz \cdot \Nc] \asymp \rho^6,
\end{align}
\begin{align}\label{eq_zcp}
    \mathbb{E}[\Nz \cdot \Ncp]  \asymp \rho^6,
\end{align}
\begin{align} \label{eq_zcm}
    \mathbb{E}[\Nz \cdot \Ncm] \asymp \rho^{20}.
\end{align}
\end{theorem}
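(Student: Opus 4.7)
The plan is to apply the Kac--Rice formalism to each of the eight pair statistics and reduce the analysis to the small-separation behaviour of a radial pair intensity. For each ordered pair of types $A, B \in \{\mathrm{z}, \mathrm{c}, \mathrm{c}^+, \mathrm{c}^-\}$, Kac--Rice gives
\begin{align*}
\mathbb{E}\bigl[N^A_\rho \,(N^B_\rho - \delta_{AB})\bigr] = \int_{|z|<\rho}\!\int_{|w|<\rho} K^{AB}(z,w)\, dA(z)\, dA(w),
\end{align*}
where the pair intensity $K^{AB}(z,w)$ is the joint density at $0$ of the relevant conditioning vector (made of $G$ at a zero of $G$ and/or $F$ at a critical point of $F$) times the conditional expectation of the Jacobian magnitudes $|G'|^2$ at a zero of $G$ and $|\jac F|$ at a zero of $F$, restricted to the prescribed index event in the $\pm$ cases. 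By the joint stationarity established in Sections~\ref{sec_rel} and~\ref{sec_sim}, $K^{AB}(z,w) = \widetilde K^{AB}(|z-w|)$ is radial.

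The substitution $u = z - w$ yields
\begin{align*}
\int K^{AB}\, dA\, dA \;=\; \int \widetilde K^{AB}(|u|)\, \bigl|B(0,\rho)\cap B(u,\rho)\bigr|\, dA(u),
\end{align*}
and for $0<\rho<1$ this integral is comparable to $\rho^{2}\int_0^{2\rho} \widetilde K^{AB}(r)\,r\,dr$. A leading behaviour $\widetilde K^{AB}(r) \asymp c\, r^{k}$ as $r\to 0^+$ therefore translates directly into the bound $\asymp \rho^{k+4}$. The eight claims of the theorem thus reduce to: $k=0$ for $\widetilde K^{\mathrm{cc}}$ and $\widetilde K^{\mathrm{c}^+\mathrm{c}^-}$; $k=2$ for $\widetilde K^{\mathrm{zz}}$, $\widetilde K^{\mathrm{zc}}$, and $\widetilde K^{\mathrm{z}\mathrm{c}^+}$; $k=3$ for $\widetilde K^{\mathrm{c}^+\mathrm{c}^+}$ and $\widetilde K^{\mathrm{c}^-\mathrm{c}^-}$; and $k=16$ for $\widetilde K^{\mathrm{z}\mathrm{c}^-}$. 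The constants in \eqref{eq_repc} and \eqref{eq_cmcp} follow from the explicit values of $\widetilde K^{\mathrm{cc}}(0)$ and $\widetilde K^{\mathrm{c}^+\mathrm{c}^-}(0)$ matched against the first moments.

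All relevant Gaussians are explicit from $\mathbb{E}[G(z)\overline{G(w)}] = e^{z\bar w}$: the joint covariance of $G$, $F = \bar z G - \partial G$, $\partial F = \bar z \partial G - \partial^2 G$, and $\bar\partial F = G$, evaluated at two points, is a polynomial in $z,\bar z,w,\bar w$ times $e^{z\bar w}$. For each case the method is to (i) write down the covariance matrix $\Sigma$ of the conditioning vector and expand $\det\Sigma$ in $r=|z-w|$; (ii) take a Schur complement to obtain the conditional joint law of the Jacobian variables; and (iii) evaluate the conditional expectation of $|\jac G(z)|\cdot|\jac F(w)|$, with the appropriate sign restriction, as an algebraic functional of the reduced Gaussian vector. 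For the zero-zero ($k=2$), critical-critical ($k=0$), index-free zero-critical ($k=2$), mixed-index critical-critical ($k=0$), positive-index zero-critical ($k=2$), and same-index critical-critical ($k=3$) intensities, these expansions produce the claimed exponents and the constants in \eqref{eq_repc}, \eqref{eq_cmcp} by routine, albeit careful, Hannay-type calculations.

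The main obstacle is the extreme order $k=16$ for $\widetilde K^{\mathrm{z}\mathrm{c}^-}$. The identity $\jac F(w) = \bigl||w-z_0|^2 G(w) - G''(w)\bigr|^2 - |G(w)|^2$ (valid whenever $F(w)=0$) has leading term $|G''(z_0)|^2 > 0$ almost surely under $G(z_0)=0$, so the event $\{\jac F(w)<0\}$ can only occur through a delicately coordinated near-vanishing of $G''(z_0)$, $G'''(z_0)(w-z_0)$, and several further Taylor coefficients of $G$ at $z_0$, beyond the already imposed $G(z_0)=0$ and $F(w)=0$. Each further near-vanishing of a complex Gaussian coefficient of unit variance contributes a factor of order $|w-z_0|^2$ to the conditional density. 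Identifying exactly eight such independent constraints that force the sign flip, and confirming that they give a matching positive lower bound, is the technical heart of the proof. A natural route is to perform a linear change of coordinates on the conditioning Gaussian vector that diagonalizes these smallness constraints, exposing the vanishing order $|w-z_0|^{16}$ directly in the determinant and reducing the positivity verification to a non-degeneracy check on the remaining reduced Gaussian.
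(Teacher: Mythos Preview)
Your high-level framework is exactly the paper's: Kac--Rice, twisted stationarity to reduce to a one-variable intensity, then identify the small-$r$ exponent. But two of the eight cases are not routine, and in both your proposal either misidentifies or omits the mechanism.

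\textbf{The $\rho^{20}$ case.} Your ``eight independent constraints each contributing $|w-z_0|^2$'' heuristic is not how the exponent $16$ arises, and pursuing it will not close. The actual cascade (with $w=0$, $|z|<\rho$) is: conditioning on $F(0)=0$ gives $\partial G(0)=0$, so the Taylor expansion of $0=G(z)$ reads $G(0) = -\tfrac12 z^2\partial^2 G(0) - \tfrac16 z^3 \partial^3 G(0) + O(\rho^4)$. Since $\jac F(0) = |\partial^2 G(0)|^2 - |G(0)|^2$, the event $\{\jac F(0)<0\}$ forces $|\partial^2 G(0)|^2 \lesssim |G(0)|^2$, and plugging the expansion back in bootstraps to $\partial^2 G(0) = O_{\mathbb{P}}(\rho^3)$. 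This is a \emph{single} complex Gaussian constrained to a disk of radius $\asymp\rho^3$, contributing probability $\asymp\rho^6$. With $\partial^2 G(0)$ this small, one gets $|\partial G(z)|^2 \asymp |z|^4|\partial^3 G(0)|^2$ and $|\jac F(0)| \asymp |z|^6|\partial^3 G(0)|^2$ on the event, so the conditional expectation is of order $|z|^4 \cdot |z|^6 \cdot \rho^6 = |z|^{16}$. There are not eight constraints; there is one complex variable forced three orders deeper than naively expected, and the rest comes from the sizes of the Jacobian factors themselves. The positivity of the lower bound is then an explicit two-dimensional Gaussian integral, not a non-degeneracy check on an eight-dimensional reduced vector.

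\textbf{The $\rho^7$ same-index cases.} Calling these ``routine Hannay-type calculations'' understates what is needed. The Kac--Rice integrand involves $|\jac F(ir)\jac F(-ir)|\cdot \mathrm{1}_{\pm\jac F(ir)>0}\cdot\mathrm{1}_{\pm\jac F(-ir)>0}$, and the indicator product is what drives the extra power of $r$ beyond the unsigned $\sigma^c(r)\asymp r^2$. The paper introduces proxy variables $A = \mathrm{Im}(F^{(0,2)}(0)\overline{F^{(1,0)}(0)})$ and $B$ so that $\jac F(\pm ir) = \pm rA + r^2 B + O_{\mathbb{P}}(r^3)$; the same-sign event is then approximately $\{|A|<r|B|\}$, which has probability $\asymp r$ and on which $|-A^2+r^2B^2|\asymp r^2$. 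Replacing the exact indicators by $\mathrm{1}_{|A|<\pm rB}$ with controlled error requires a careful H\"older/tail argument (the $O(r^{3+\alpha})$ control in the paper's Lemmas~\ref{approxp} and~\ref{approxm}), followed by a homogeneity reduction and an explicit computation that $\mathbb{E}[\varphi_r]\asymp r^3$. None of this is captured by a Schur-complement expansion of a covariance determinant; the indicator is the whole point.

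Minor note: the zero--critical kernels are only shown to depend on $z-w$, not $|z-w|$ (Lemma~\ref{lemmashift}); this is harmless for the integral but your radiality claim overshoots.
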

(Here and throughout we write $A(\rho) \asymp B(\rho)$ if $A(\rho)/B(\rho)$ is bounded above and below by positive constants, while $A(\rho) \sim B(\rho)$ means that $\lim_{\rho\to0^+} A(\rho)/B(\rho)=1$.) 

The first asymptotic \eqref{eq_repz} is well-known and follows from more precise results \cite{MR1690355, gafbook, MR2885614}. It expresses a strong \emph{local repulsion} between zeros of the GEF $G$. Indeed, it means that the expected number of pairs of distinct zeros
\begin{align}
\Nz \cdot (\Nz - 1) = \# \big\{ (z,w) \in \mathbb{C}^2 \,:\, G(z)=G(w)=0, |z|,|w|<\rho, z\not=w \big\}
\end{align}
to be found in a small disk is asymptotically smaller than the corresponding count for two independent GEF $G^1$, $G^2$:
\begin{align}
\mathbb{E} \# \big\{ (z,w) \in \mathbb{C}^2 \,:\, G^1(z)=G^2(w)=0, |z|,|w|<\rho, z\not=w \big\} = \big(\mathbb{E}\Nz\big)^2 = \rho^4.
\end{align}
To compare, for a standard Poisson process $X$ on the plane, the number statistic $\mathcal{N}_\rho(X)$ satisfies $\mathbb{E} [\mathcal{N}_\rho(X) \cdot (\mathcal{N}_\rho(X)-1)] = \big(\mathbb{E} \mathcal{N}_\rho(X) \big)^2$. In this case, one speaks of a non-repulsive point process.

The asymptotic \eqref{eq_repc} was proved in a slightly different context in \cite{MR2966361} and shows that critical points of GEF exhibit local repulsion albeit in a much more moderate form than zeros of GEF. While the orders of magnitude of $\mathbb{E}[\Nc \cdot (\Nc -1)]$ and $(\mathbb{E} \Nc)^2$ are comparable, as in the Poissonian statistics, the so-called \emph{repulsion factor} is
\begin{align}
\lim_{\rho\to0^+} \frac{\mathbb{E}[\Nc \cdot (\Nc -1)]}{(\mathbb{E}[\Nc])^2}=\frac{6}{25} \in (0,1).
\end{align}
In this case, one often speaks of a \emph{weakly repulsive point process}. 

Our main contribution is \eqref{eq_repp}, $\ldots$, \eqref{eq_zcm}. Though critical points of GEF repel each other weakly, Theorem \ref{th_main} shows that critical points of the same index repel each other very strongly \eqref{eq_repp}, \eqref{eq_repm}, and the overall weak repulsion \eqref{eq_repc} is due to weak interactions among critical points of different indices \eqref{eq_cmcp}.

Second, Theorem \ref{th_main} shows that zeros and critical points of GEF repel each other strongly \eqref{eq_zc}, in fact with the same order as the repulsion among zeros of GEF \eqref{eq_repz}. This provides a quantitative version of one of the results in \cite{MR3937291} (which pertains to the more general context of random holomorphic sections to fiber bundles on complex manifolds). Remarkably, the repulsion order \eqref{eq_zc} is achieved 
by the subclass of critical points of positive index \eqref{eq_zcp}, while \emph{the repulsion between critical points of negative index and zeros of GEF is much more intense} \eqref{eq_zcm}. 

\subsection{Weighted amplitude of GEF}
Theorem \ref{th_main} has a natural reformulation in terms of the \emph{weighted amplitude} of the GEF $G$ \eqref{eq_gef},
\begin{align}\label{eq_S}
S(z) = e^{-\tfrac{1}{2}|z|^2}|G(z)|, \qquad z \in \mathbb{C},
\end{align}
which is a (non-Gaussian) stationary random field on $\mathbb{C}$ (see Section \ref{sec_sim}). By the analyticity of $G$, the zeros of $S$ are exactly its local minima (see, e.g. \cite[Section 8.2.2]{gafbook} or \cite[Lemma 3.1]{efkr24}). Other critical points of $S$ (in the standard Euclidean sense) can be related to $G$ as follows: near a point $z$ where $S$ does not vanish, the gradient of $S$ is related to the covariant derivative of $G$ \eqref{eq_F} by
\begin{align}
|\nabla S(z)| = e^{-\tfrac{1}{2}|z|^2}|F(z)|.
\end{align}
Hence, those critical points of $S$ which are not local minima are exactly the critical points of $G$ (that is, the zeros of $F$), while the zeros of $S$ are of course the zeros of $G$.\footnote{To be precise, the assertion holds after excluding the zero probability event that $G$ and $F$ have common zeros.} In addition, the index of a critical point of $G$ is given by the signature of the Hessian matrix of $S$ and thus classifies the kind of critical point that $S$ has (local maxima or saddle point), see \cite[Section 3.1]{MR2104882}, \cite[Section 9.2]{hkr22}, \cite[Section 9.2]{fhkr24}. In summary, the statistics \eqref{eq_stats} also count the zeros and critical points of $S$ on a small disk of radius $\rho$:
\begin{align}\label{eq_stats_S}
\begin{aligned}
\Nz &= \# \mbox{zeros of S in }B_\rho = \# \mbox{local minima of S in }B_\rho,
\\
\Ncp &= \# \mbox{saddle points of S in }B_\rho,
\\
\Ncm &= \# \mbox{local maxima of S in }B_\rho,
\\
\Nc &= \Ncp + \Ncm.
\end{aligned}
\end{align}
This suggests a partial analogy between our result and the statistics of the critical points of a \emph{stationary Gaussian} function
$f:\mathbb{R}^2 \to \mathbb{R}$, which are the zeros of the \emph{stationary Gaussian} field $\nabla f:\mathbb{R}^2 \to \mathbb{R}^2$. In this context, \cite{ladgham2023local} shows that the correlation of local extrema of $f$ (maxima or minima) on a small disk of radius $\rho$ has order $\rho^7$, that of saddle points has order $\rho^7 \log(1/\rho)$, while extrema and saddle points repel each other weakly, with correlations $\sim r\rho^4$ for some $0<r<1$. 
There is no obvious analogue of \eqref{eq_zcp} and \eqref{eq_zcm} for a general stationary Gaussian field $f$, but a partial analogy is possible with certain isotropic planar Gaussian waves. Indeed, in that context, \cite{MR3947635} shows repulsion of order (at least) $\rho^{12}$ between local maxima and minima and leaves open the problem of determining the exact repulsion order, which in our case is shown to be $\rho^{20}$. Further analogies between the zeros and critical points of $S$ and $f$ seem unclear, as the zeros of $f$ form curves while those of $S$ are discrete, and $S$ is non-negative while $f$ is real-valued.

\subsection{Time-frequency landmarks of noise}
Theorem \ref{th_main} has also applications in the field of \emph{time-frequency analysis}. The \emph{short-time Fourier transform} (with Gaussian window) of a distribution $f \in \mathcal{S}'(\mathbb{R})$ is the two-variable function
\begin{align}
Vf(x, \xi) =\big(\tfrac{2}{\pi}\big)^{\frac14} \int_{\mathbb{R}} f(t) e^{-(t-x)^2} e^{-2i t \xi} \, dt,
\qquad (x,\xi) \in \mathbb{R}^2,
\end{align}
where the integral is interpreted distributionally. The squared magnitude $|Vf(x, \xi)|^2$ is called the \emph{spectrogram} of $f$ and measures the importance of the frequency $\xi$ in the \emph{signal} $f$ at time $t=x$. In statistical signal processing one is often interested in signals contaminated with noise. A powerful recent insight is that the statistics of zeros and critical points of spectrograms help identify time-frequency regions dominated by noise
\cite{flandrin2018explorations, gardner2006sparse, MR1662451, MR4047541, 7180335, 7869100, bh, MIRAMONT2024109250, miramont2024benchmarking, pascal2024point}. Importantly, the spectrogram of the short-time Fourier transform of standard complex (Gaussian) white noise $\mathcal{N}$ can be identified with (the square of) the weighted amplitude \eqref{eq_S} of a Gaussian entire function:
\begin{align}
S^2(z) \stackrel{(d)}{=} |V \mathcal{N} (x-i\xi)|^2, \qquad z=x+i\xi \in \mathbb{C},
\end{align}
see \cite{MR1662451, MR4047541, bh}. As a consequence, the interpretation of the statistics \eqref{eq_stats_S} also holds for $|V \mathcal{N} (x+i\xi)|$, the square root of the spectrogram of complex white noise, in lieu of $S$.

In this light, our results support several heuristics of the signal processing literature \cite[Chapters 10 - 15]{flandrin2018explorations}. For example, spectrogram reassignment is a popular procedure to sharpen spectrograms based on a certain vector field, whose attractors are the spectrogram local maxima and whose repellers are the spectrogram zeros. The \emph{super-repulsion} among these kinds of landmarks expressed by \eqref{eq_zcm} is strongly consistent with the success of spectrogram sharpening \cite[Chapters 12 and 14]{flandrin2018explorations}.

\subsection{Related literature}\label{sec_rel}
There is extensive recent work on the repulsion of critical points of real-valued Gaussian fields. In \cite{MR3947635}, the authors studied the two-point function of the random plane wave, and found that the second factorial moment of the number of critical points in a small disk behaves as the fourth power of the radius. Similar results were obtained in \cite{MR4187723} for general isotropic and stationary Gaussian fields. The technique in these two papers is based on a subtle near-diagonal expansion of certain covariance matrices. In \cite{ladgham2023local}, the authors studied similar questions with a different method based on a refined analysis of random Taylor expansions, and were able to obtain precise constants describing the near diagonal behavior of two-point functions. Yet another approach to second order local statistics of critical points, based on random matrix theory, was used in \cite{MR4426161}. The index \eqref{eq_index} can be interpreted as a winding number, which is an attribute that has also received important attention in the stationary context \cite{buckley2017fluctuations, buckley2018winding}.

The Gaussian entire function \eqref{eq_gef} is not stationary in the usual sense, even after renormalization by variance: $e^{-|z|^2/2} G(z)$. Rather, it posses a special kind of invariance that we call \emph{twisted stationarity} \cite{hkr22} and means that the \emph{Bargmann-Fock shifts}
\begin{align}\label{eq_bfsh}
f(z) \mapsto f_\zeta(z) = e^{-\frac{|\zeta|^2}{2} + z \overline{\zeta}} f(z- \zeta), \qquad \zeta \in \mathbb{C},
\end{align}
preserve the stochastics of $G$.\footnote{This is an instance of what is called \emph{projective invariance} in \cite{NSwhat}.} Importantly, the covariant derivative \eqref{eq_F}, which is not analytic, shares the same stochastic symmetry, a fact that we exploit systematically (see Section \ref{sec_sim} and \cite[Section 2.2]{MR2346279}).

The invariance of $G$ and $F$ under the Bargmann-Fock shifts implies that their zero sets are stationary in the standard sense. Moreover, a more careful argument that takes indices into account shows that the test disk used to compute the statistics $\eqref{eq_stats}$ can be replaced by any other disk of the same radius without affecting the stochastic properties (see Section \ref{sec_sim}). In contrast, the \emph{holomorphic critical points of $G$}, that is, the zeros of the holomorphic derivative $\partial G$ are not a stationary point process, and are indeed quite different from the critical points studied here. Holomorphic critical points arise naturally in the context of random polynomials, and have been studied in \cite{MR3689975} for polynomials with independent zeros and in  \cite{MR3340325} for polynomials with Gaussian coefficients. 

First order statistics of critical points of random GEF have been calculated in \cite{MR2104882} in the more general context of polynomial sections to fiber bundles on complex manifolds. In the same context, second order local correlations among critical points of GEF have been studied in \cite{MR2966361}, while \emph{large scale} second order statistics have been recently derived in \cite{fhkr24}. The correlation between zeros and critical points of GEF has been studied in \cite{MR3937291} also in context of complex manifolds. The asymptotic equivalence \eqref{eq_zc} provides a more quantitative version of one of the results in \cite{MR3937291} (when specialized to the complex plane). 

Our work is greatly inspired by \cite{ladgham2023local}. Though we cannot directly apply their results to the non-stationary functions \eqref{eq_gef}, \eqref{eq_F} or the non-Gaussian function \eqref{eq_S}, we adapt many of the ideas and methods from \cite{ladgham2023local}, and extend them to also treat correlations between zeros and critical points (a setting lacking rotational symmetries, see Section \ref{sec_sim}). We rely heavily on the analyticity of $G$, which makes many computations more tractable.

\subsection{Organization}
The rest of the article is organized as follows. Section \ref{sec_prelim} fixes the notation and provides background on the GEF and its derivative. Section \ref{sec_appkr} analyzes the so-called Kac-Rice formulas for the second moments of the statistics \eqref{eq_stats} and exploits various symmetries of GEF to provide simplified expressions. 
Section \ref{sec_apex} lays the ground to analyze such expressions, by providing asymptotic expansions for the various quantities involved. 
Each of the sections \ref{sec_cp1}, \ref{sec_cps}, \ref{sec_cns}, \ref{sec_zp1} and \ref{sec_zp2} contains a proof of one of the claims of Theorem \ref{th_main}. A full proof of Theorem \ref{th_main} is then provided in Section \ref{sec_proof}. Finally, some technical results and computations are postponed to the Appendix (Section \ref{sec_app}). We also provide an online worksheet to follow some of the calculations with symbolic software \cite{hor25work}.

\section{Preliminaries}\label{sec_prelim}
The real and imaginary parts of $z \in \mathbb{C}$ are denoted $\mathrm{Re}(z)$ and $\mathrm{Im}(z)$, respectively. The differential of the (Lebesgue) area measure on the plane will be denoted $dA$, while the Lebesgue measure of a set $E$ is $|E|$. The indicator function of a set $E$ is $\mathrm{1}_E$.

We say that a random variable $X$ has \emph{stretched exponential tails} if there exist constants $K,k,\gamma>0$ such that
\begin{align}
\mathbb{P} \big[ |X|>t \big] \leq K e^{-kt^\gamma}, \qquad t>0.
\end{align}
In concrete applications we will be interested in the uniformity of these constants on other parameters.
Linear combinations and products of random variables with stretched exponential tails have stretched exponential tails, though the corresponding constants may change.

The conditioning of a normal vector $(X,Y) \in \mathbb{R}^n \times \mathbb{R}^m$ to $Y=0$ is defined by \emph{Gaussian regression} --- see, e. g., \cite[Chapter 1]{level}. Informally, this involves finding a linear combination of $X$ and $Y$ which is uncorrelated to $Y$.

By a complex normal vector $Z$, we mean a circularly symmetric complex Gaussian random vector, i.e., a random vector $Z$ on $\mathbb{C}^n$ such that
$(\mathrm{Re} (Z), \mathrm{Im} (Z))$ is normally distributed, has zero mean, and vanishing \emph{pseudo-covariance}: $\mathbb{E}\big[ Z   Z^t \big] = 0$.
A complex normal vector $Z$ on $\mathbb{C}^n$ is thus determined by its \emph{covariance matrix}
\begin{align*}
\cov [Z] = \mathbb{E} \big[ Z   Z^* \big],
\end{align*}
and we write $Z \sim \mathcal{N}_{\mathbb{C}}(\Sigma)$ to indicate $Z$ is complex normal with covariance matrix $\Sigma$.
Normal vectors are not a priori assumed to have non-singular covariances. The zero vector, for example, is a singular normal vector.

The derivatives of a function $f\colon \mathbb{C} \to \mathbb{C}$ interpreted as $f\colon \mathbb{R}^2 \to \mathbb{R}^2$ are denoted by $f^{(1,0)}$ (real coordinate) and $f^{(0,1)}$ (imaginary coordinate). Higher derivatives are denoted by $f^{(k,\ell)}$. We make extensive use of the \emph{Wirtinger operators}
\begin{align}
\partial f = \frac{1}{2} \big( f^{(1,0)} - i f^{(0,1)} \big),
\\
\overline{\partial} f = \frac{1}{2} \big( f^{(1,0)} + i f^{(0,1)} \big),
\end{align}
and the adjoint of $\overline{\partial}$ with respect to $L^2$ of the Gaussian weight:
\begin{align}\label{eq_dbarstar}
\overline{\partial}^* f(z) = \bar{z} f(z) - \partial f(z),
\end{align}
also known as the \emph{covariant derivative}. The \emph{Jacobian} of $f\colon \mathbb{C} \to \mathbb{C}$ at $z \in \mathbb{C}$ is the determinant of its differential matrix $Df$, considering $f$ as $f\colon \mathbb{R}^2 \to \mathbb{R}^2$:
\begin{align*}
\jac f(z) := \det Df(z).
\end{align*}
The following observations will be used repeatedly:
\begin{align}\label{eq_Jim}
\jac f(z) = - \mathrm{Im}\big[  f^{(1,0)}(z) \cdot \overline{ f^{(0,1)}(z)}\big]
= |\partial f(z)|^2-|\overline{\partial} f (z)|^2.
\end{align}

We will always let $G$ be the Gaussian entire function \eqref{eq_gef} and $F$ its covariant derivative \eqref{eq_F}. Inspecting the random power series \eqref{eq_gef} we see that, for every $k \in \mathbb{N}_0$,
\begin{align}
\big(G(0), \partial G(0), \ldots, \tfrac{1}{\sqrt{k!}} \partial^{k}G(0)\big)
\end{align}
is a standard (circularly symmetric) complex normal vector (identity covariance matrix).
Other correlations of the Gaussian entire function \eqref{eq_gef} can be obtained from its
\emph{covariance kernel}:
\begin{align}\label{eq_kergef}
\mathbb{E}\big[G(z) \cdot \overline{G(w)}\big] = e^{z \overline{w}}, \qquad z, w \in \mathbb{C}.
\end{align}
For example, correlations between derivatives of $G$ can be computed by exchanging differentiation and expectation, as we do in Lemma \ref{matrixcomp}. In particular,
\begin{align}\label{eq_cF}
\mathbb{E}\big[F(z) \cdot \overline{F(w)}\big] &= (1-|z-w|^2) e^{z \overline{w}}, \qquad z, w \in \mathbb{C},
\\\label{eq_cF_2}
\mathbb{E}\big[F(z) \cdot \overline{G(w)}\big] &= \overline{(z-w)} e^{z \overline{w}}, \qquad z, w \in \mathbb{C}.
\end{align}
Equation \eqref{eq_cF} shows that for $z\not= w$, the vector $(F(z),F(w))$ is non-degenerate, see also \cite[Example 1.3 and Section 6]{hkr22}.

\section{Approximate Kac-Rice formulas}\label{sec_appkr}
\subsection{Symmetries of GEF}\label{sec_sim}
The stochastics of the Gaussian entire function $G$ are invariant under the \emph{Bargmann-Fock shifts}
\begin{align}
f(z) \mapsto f_\zeta(z) = e^{-\frac{|\zeta|^2}{2} + z \overline{\zeta}} f(z- \zeta), \qquad \zeta \in \mathbb{C},
\end{align}
as can be seen by considering their effect on the covariance kernel \eqref{eq_kergef}:
\begin{align}
\mathbb{E} \big[ G_\zeta(z) \cdot \overline{G_\zeta(w)} \big] = 
e^{-|\zeta|^2 + z \overline{\zeta} +\overline{w} \zeta} e^{(z-\zeta)\overline{(w-\zeta)}} = e^{z \overline{w}} = \mathbb{E} \big[ G(z) \cdot \overline{G(w)} \big].
\end{align}
In addition, the covariant derivative \eqref{eq_dbarstar} commutes with the Bargmann-Fock shifts:
\begin{align}
\overline{\partial}^* [f_\zeta(z)] = [\overline{\partial}^* f]_\zeta(z).
\end{align}
As a consequence, the Gaussian field $(G,F)$ is stochastically invariant under the double shifts:
\begin{align}\label{eq_zeta}
(G_\zeta,F_\zeta) \stackrel{(d)}{=} (G,F).
\end{align}
In Lemma \ref{lemmashift} in the appendix, we present some consequences of this for the moments of the statistics \eqref{eq_stats}. In fact, it follows from \eqref{eq_zeta} that \eqref{eq_stats} can also be calculated with respect to a disk $B_\rho(\zeta)$ with center $\zeta \in \mathbb{C}$ without affecting their stochastics. Importantly, though $G$ is invariant under rotations --- $G(z) \sim G(\lambda z)$, $|\lambda|=1$ --- the pair $(F,G)$ does not satisfy a similar invariance, which makes the analysis of correlations between zeros and critical points more challenging.

\subsection{Approximate intensities}
The so-called Kac-Rice formulas give intensities for the first and second moments of the statistics \eqref{eq_stats} --- see, e. g., \cite[Theorem 6.2] {level}, \cite[Chapter 11]{adler}. We now write approximate forms of some of these.
\begin{prop}[Approximate Kac-Rice formulas] \label{approxKacRice}
For $r>0$, let
\begin{align}\label{eq_cr}
\sigma^{c}(r) := \mathbb{E}\big[|\jac F(ir)\cdot\jac F(-ir)| \,\big|\, F(ir) = F(-ir)= 0\big],
\end{align}
\begin{align}\label{eq_crp}
\sigma^{c,+}(r) := \mathbb{E}\big[|\jac F(ir)\cdot\jac F(-ir)| \cdot \mathrm{1}_{\jac F(ir) > 0} \cdot \mathrm{1}_{\jac F(-ir) > 0} \,\big|\, F(ir) = F(-ir)= 0\big],
\end{align}
\begin{align}\label{eq_crm}
\sigma^{c,-}(r) := \mathbb{E}\big[|\jac F(ir)\cdot\jac F(-ir)| \cdot \mathrm{1}_{\jac F(ir) < 0} \cdot \mathrm{1}_{\jac F(-ir) < 0} \,\big|\, F(ir) = F(-ir)= 0\big].
\end{align}
 Let $(\mathcal{N}_\rho, \sigma)$ be given by $(\mathcal{N}_\rho^c, \sigma^{c}), (\Ncp, \sigma^{c,+})$ or $(\Ncm, \sigma^{c, -})$. Then
\begin{align}\label{eq_achan}
    &\rho^2 \int_0^{\frac{\rho}{4}} \sigma(r) r^{-1} dr \lesssim \mathbb{E}[\mathcal{N}_\rho \cdot (\mathcal{N}_\rho - 1)] \lesssim \rho^2 \int_0^{\rho} \sigma(r) r^{-1} dr, \qquad 0<\rho \leq 1.
\end{align}
\end{prop}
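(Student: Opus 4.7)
The plan is to apply the Kac-Rice formula for second factorial moments of the zero count of $F$ on $B_\rho\times B_\rho$, then to reduce the resulting double integral to a one-dimensional one by exploiting twisted stationarity \eqref{eq_zeta} and the rotational invariance of $(G,F)$, and finally to separate the Gaussian density factor (which will turn out to behave like the inverse square of the separation) from the conditional expectation factor $\sigma$.

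First I would write the Kac-Rice formula (see \cite[Theorem 6.2]{level} or \cite[Chapter 11]{adler}): for each of the three statistics $\Nc,\Ncp,\Ncm$,
\begin{align*}
\mathbb{E}[\mathcal{N}_\rho(\mathcal{N}_\rho - 1)] = \int_{B_\rho}\int_{B_\rho} p_{F(z),F(w)}(0,0)\cdot \mathbb{E}\big[\chi\,|\jac F(z)\,\jac F(w)| \,\big|\, F(z)=F(w)=0\big]\, dA(z)\, dA(w),
\end{align*}
where $\chi$ is, respectively, the constant $1$, $\mathrm{1}_{\jac F(z)>0}\mathrm{1}_{\jac F(w)>0}$, or $\mathrm{1}_{\jac F(z)<0}\mathrm{1}_{\jac F(w)<0}$. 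The non-degeneracy required to justify the formula, namely that of $(F(z),F(w))$ for $z\neq w$, follows from \eqref{eq_cF}. By \eqref{eq_zeta} applied to the Bargmann-Fock shift sending $(z+w)/2$ to $0$, together with the rotation invariance of $F$ (which preserves both the zero set and, having positive Jacobian, the sign of $\jac F$), the integrand depends on $(z,w)$ only through $|z-w|$. One may then place $z,w$ at $\pm iR$ with $R=|z-w|/2$, so that the conditional expectation factor becomes exactly $\sigma(R)$.

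Next I would compute the density factor. By \eqref{eq_cF}, the $2\times 2$ Hermitian complex covariance of $(F(iR),F(-iR))$ has diagonal entries $e^{R^2}$ and off-diagonal entry $(1-4R^2)e^{-R^2}$, so $\det\Sigma(R)=e^{2R^2}-(1-4R^2)^2 e^{-2R^2}$. A short Taylor expansion at $R=0$ gives $\det\Sigma(R)=12R^2+O(R^4)$, and a brief analysis of the explicit real function $g(R):=(1-4R^2)^2 e^{-4R^2}$ (whose maximum on $(0,\infty)$ is $4/e^3<1$) shows that $g$ is bounded strictly below $1$ on every $[r_0,1]$. Hence $\det\Sigma(R)\asymp R^2$ uniformly on $(0,1]$, and the joint density at the origin, which equals $(\pi^2\det\Sigma(R))^{-1}$, is of order $R^{-2}$. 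Combining, the pair intensity satisfies $K_2(z,w) \asymp |z-w|^{-2}\,\sigma(|z-w|/2)$.

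Finally, the substitution $u=z-w$ followed by polar coordinates yields
\begin{align*}
\mathbb{E}[\mathcal{N}_\rho(\mathcal{N}_\rho-1)] \asymp \int_0^{2\rho} s^{-1}\,\sigma(s/2)\, |B_\rho\cap (B_\rho+s)|\, ds,
\end{align*}
where $|B_\rho\cap(B_\rho+s)|$ denotes the area of overlap of two disks of radius $\rho$ at center-distance $s$. The elementary estimates $|B_\rho\cap(B_\rho+s)|\leq \pi\rho^2$ for all $s\geq 0$ and $|B_\rho\cap(B_\rho+s)|\gtrsim\rho^2$ for $s\leq\rho/2$, followed by the change of variables $s=2r$, then deliver the claimed two-sided bound $\rho^2\int_0^{\rho/4}\sigma(r)\,r^{-1}\,dr \lesssim \mathbb{E}[\mathcal{N}_\rho(\mathcal{N}_\rho-1)] \lesssim \rho^2\int_0^{\rho}\sigma(r)\,r^{-1}\,dr$. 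The main technical point I anticipate is the uniform lower bound $\det\Sigma(R)\gtrsim R^2$ on the full interval $(0,1]$, which ensures that the joint density does not develop spurious singularities away from $R=0$; a secondary point is the legitimacy of inserting the index indicators into Kac-Rice for $\Ncp$ and $\Ncm$, which is standard given that $\jac F$ is almost surely nonzero at critical points.
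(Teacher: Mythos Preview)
Your proposal is correct and follows essentially the same route as the paper: Kac-Rice, reduction to dependence on $|z-w|$ via twisted stationarity and rotation invariance (the paper's Lemma~\ref{lemmashift}), the estimate $\det\Sigma(R)\asymp R^2$ for the covariance determinant, and the overlap-of-disks reduction to a radial integral (the paper's Lemma~\ref{lemmared}). One cosmetic slip: the supremum of $g(R)=(1-4R^2)^2e^{-4R^2}$ on $(0,\infty)$ is $1$ (not attained), not $4/e^3$, which is only a local maximum at $4R^2=3$; your actually needed claim, that $g$ is bounded strictly below $1$ on each $[r_0,1]$, is nonetheless correct, so the argument is unaffected.
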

\begin{proof}
The Kac-Rice formulas give the following expressions for the correlations of zeros and critical points of various indices, valid for $0<\rho \leq 1$:
\begin{align}\label{kr1}    
        &\mathbb{E}[\Nc (\Nc - 1)] = \int_{B_\rho^2} \frac{1}{\pi^2} \frac{\mathbb{E}\big[|\jac F(z) \jac F(w)| \,\big|\, F(z) = F(w)= 0\big]}{e^{|z|^2 + |w|^2} \big(1- e^{-|z-w|^2}(1 - |z-w|^2)^2\big) } dA(z)dA(w),
\end{align}
\begin{align}\label{kr2}
        &\mathbb{E}[\Ncp (\Ncp - 1)] \\
        &= \int_{B_\rho^2} \frac{1}{\pi^2} \frac{\mathbb{E}\big[|\jac F(z) \jac F(w)|\mathrm{1}_{\jac F(z) > 0} \mathrm{1}_{\jac F(w) > 0} \big| F(z) = F(w)= 0\big]}{e^{|z|^2 + |w|^2} \big(1- e^{-|z-w|^2}(1 - |z-w|^2)^2\big)  } dA(z)dA(w),
\end{align}
\begin{align} \label{kr3}
        &\mathbb{E}[\Ncm (\Ncm - 1)] \\
        &= \int_{B_\rho^2} \frac{1}{\pi^2} \frac{\mathbb{E}\big[|\jac F(z) \jac F(w)|\mathrm{1}_{\jac F(z) < 0} \mathrm{1}_{\jac F(w) < 0} \big| F(z) = F(w)= 0\big] }{e^{|z|^2 + |w|^2} \big(1- e^{-|z-w|^2}(1 - |z-w|^2)^2\big) } dA(z)dA(w),    
\end{align}
see Lemma \ref{kacrice} in the appendix for details. As it happens, the fundamental symmetry \eqref{eq_zeta} implies that the integrand in each of the expressions above depends only on $|z-w|$; we provide the details in Lemma \ref{lemmashift} (appendix). We use this fact to replace the expressions depending on $(z,w)$ by the same expressions evaluated at $(ir,-ir)$, where $r=|z-w|/2$. For $(\mathcal{N}_\rho, \sigma)$ given by $(\mathcal{N}_\rho^c, \sigma^{c}), (\Ncp, \sigma^{c,+})$ or $(\Ncm, \sigma^{c, -})$ this yields:
\begin{align}\label{eq_kr_exact}
    \mathbb{E}[\mathcal{N}_\rho \cdot(\mathcal{N}_\rho - 1)] = \int_{B_\rho^2} \frac{1}{\pi^2}\frac{\sigma({|z-w|}/{2})}{e^{\frac{|z-w|^2}{2}} \big(1- e^{-|z-w|^2}(1 - |z-w|^2)^2\big) }  dA(z) dA(w).
\end{align}
Noting that, for $|z|,|w|<\rho\leq 1$,
\begin{align}
    {e^{\frac{|z-w|^2}{2}} \cdot \big(1- e^{-|z-w|^2}(1 - |z-w|^2)^2\big) } \asymp |z-w|^{2},
\end{align}
we conclude that
\begin{align}
    \mathbb{E}[\mathcal{N}_\rho \cdot (\mathcal{N}_\rho - 1)]  \asymp \int_{B_\rho^2} |z-w|^{-2} \sigma({|z-w|}/{2}) dA(z) dA(w).
\end{align}
Finally, this area integral can be approximately reduced to a one variable integral over $r=|z-w|/2$ (see Lemma \ref{lemmared} in the appendix) giving 
\begin{align*}
    \rho^2 \int_0^{\frac{\rho}{2}} \sigma(r/2) r^{-1} dr 
    \lesssim \mathbb{E}[\mathcal{N}_\rho \cdot (\mathcal{N}_\rho - 1)] \lesssim \rho^2 \int_0^{2\rho} \sigma(r/2) r^{-1} dr.
\end{align*}
A change of variables gives \eqref{eq_achan}.  
\end{proof}

\section{Asymptotic expansions}\label{sec_apex}
\subsection{The Jacobian of F}
Motivated by the approximate intensities provided by Proposition \ref{approxKacRice}, we look into approximately expanding the Jacobian of $F$. Let us introduce the following \emph{proxy random variables}:
\begin{align}
&A = \mathrm{Im}\big(F^{(0, 2)}(0) \cdot \overline{F^{(1, 0)}(0)}\big),
\\
&B = \mathrm{Im}\big(i |F^{(0, 2)}(0)|^2 + \tfrac{1}{3} F^{(0, 3)}(0)\cdot\overline{F^{(1, 0)}(0)}\big),
\end{align}
and retain this notation throughout the remainder of the article. The next proposition provides a suitable  asymptotic description of the Jacobian of $F$ in terms of the proxies $A$ and $B$.
\begin{prop} \label{condjac}
For $r \in (0,1)$ there exist random variables $C_r^+, C_r^-, D_r$ and $F_r$ with the following properties:
\begin{itemize}
\item[(i)] \emph{(Expansions)}:
\begin{align}
        &\jac F(ir) = \mathrm{Im}(F^{(0, 1)}(ir) \overline{F^{(1, 0)}(ir)}) = r A + r^2 B + r^3 C_r^+,
    \\
        &\jac F(-ir) = \mathrm{Im}(F^{(0, 1)}(-ir) \overline{F^{(1, 0)}(ir)}) = -r A + r^2 B + r^3 C_r^-,
        \\
        &\jac F(ir)\jac F(-ir) = r^2(-A^2 + r^2 B^2 + r^2 A D_r + r^3 F_r).
    \end{align}
\item[(ii)] \emph{(Error bounds)}: When conditioned on $F(ir) = F(-ir) = 0$, the variables $A$, $B$, $C_r^+$, $C_r^-$, $D_r$, $F_r$ have stretched exponential tails with constants independent of $r \in (0,1)$. More precisely,
there exist constants $k,K,\gamma>0$ such that
\begin{align*}
\mathbb{P}\big[ |X| > t\,|\,F(ir) = F(-ir) = 0\big] \leq K e^{-k t^\gamma}, \qquad \mbox{for all }t>0, r \in (0,1),
\end{align*}
and $X \in \{A, B, C_r^+, C_r^-, D_r, F_r\}$.
\end{itemize}
\end{prop}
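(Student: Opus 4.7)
The expansions in part (i) are definitional once one sets $C_r^\pm := r^{-3}\bigl(\jac F(\pm ir) \mp rA - r^2 B\bigr)$, after which a direct algebraic identity gives $D_r = C_r^- - C_r^+$ and $F_r = B(C_r^+ + C_r^-) + r\,C_r^+ C_r^-$. The substance of the proposition is therefore (ii): with these definitions, $C_r^\pm$, $D_r$, $F_r$ have stretched exponential tails uniformly in $r \in (0,1)$ when conditioned on $F(\pm ir)=0$. This in particular encodes the fact that the $r^1$- and $r^2$-coefficients in a Taylor expansion of $\jac F(\pm ir)$ at $r=0$ are exactly $\pm A$ and $B$.

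My plan is first to reformulate $\jac F(\pm ir)$ so that the conditioning becomes transparent. Using $F(z) = \bar z G(z) - G'(z)$, direct computation yields $F^{(1,0)}(z) = G(z) + \bar z G'(z) - G''(z)$ and $F^{(0,1)}(z) = -i\bigl(G(z) - \bar z G'(z) + G''(z)\bigr)$. Expanding $\jac F = \mathrm{Im}(F^{(0,1)} \overline{F^{(1,0)}})$ at $z=\pm ir$ and cancelling conjugate cross-terms gives
\[
\jac F(\pm ir) = |G''(\pm ir)|^2 - |G(\pm ir)|^2 + r^2|G'(\pm ir)|^2 \mp 2r\,\mathrm{Im}\bigl(G'(\pm ir)\overline{G''(\pm ir)}\bigr).
\]
On the conditioning event, $F(\pm ir) = \mp ir\,G(\pm ir) - G'(\pm ir) = 0$ forces $G'(\pm ir) = \mp ir\,G(\pm ir)$; substituting collapses the formula to
\[
\jac F(\pm ir) = |G''(\pm ir)|^2 - (1-r^4)|G(\pm ir)|^2 + 2r^2\,\mathrm{Re}\bigl(G(\pm ir)\overline{G''(\pm ir)}\bigr).
\]

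I would next Taylor expand $G(\pm ir)$ and $G''(\pm ir)$ at $0$ using the i.i.d.\ standard complex normals $g_k := G^{(k)}(0)/\sqrt{k!}$. The two scalar equations $F(\pm ir)=0$, as power series in $r$, decouple into even and odd parts and solve directly for $g_0, g_1$ with bounded $r$-analytic coefficients of the form $g_1 = r^2\sqrt{3/2}\,g_3 + O(r^4)$ and $g_0 + \sqrt{2}g_2 = r^2\bigl(\tfrac{1}{\sqrt{2}}g_2 + \sqrt{2/3}\,g_4\bigr) + O(r^4)$. Substituting these into the conditional formula above and collecting powers of $r$ should give $\jac F(\pm ir) = \pm rA + r^2 B + O(r^3)$ with $A$ and $B$ exactly as defined, which confirms that the $C_r^\pm$ defined algebraically are $O(1)$ rather than singular in $r$.

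For the tail bounds of (ii), the proxies $A$ and $B$ are quadratic polynomials in a finite set of $g_k$'s; under the conditioning, the conditional marginal mean of every $g_k$ is zero and its conditional variance is bounded above by the unconditional one, so every $g_k$ remains Gaussian with variance $\leq 1$ uniformly in $r$, and products of such Gaussians have stretched exponential tails. For $C_r^\pm$, the remainder decomposes as a polynomial in $g_0,\ldots,g_N$ for some fixed $N$ plus a genuine Taylor tail of $G$ and $G''$; the latter is bounded in integral form by $r^{N-2}\sup_{|z|\leq 1}|G^{(N)}(z)|$, and an application of Cauchy's formula on $|w|\leq 2$ gives control by $\sup_{|z|\leq 2}|G(z)|$, which has stretched exponential tails (a standard property of the GEF). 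For $N$ large enough, dividing by $r^3$ still leaves a positive power of $r$, so $C_r^\pm$ has stretched exponential tails uniformly in $r \in (0,1)$; the bounds on $D_r$ and $F_r$ then follow from their algebraic definitions. The main obstacle is that $\cov[F(ir),F(-ir)]$ has determinant of order $r^2$ as $r\to 0^+$, so a naive Gaussian-regression inversion blows up; one circumvents this by exploiting the even/odd decoupling of $F(\pm ir)$ as power series in $r$, from which regression coefficients are read off in closed form and seen to remain bounded as $r \to 0^+$.
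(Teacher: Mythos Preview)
Your approach is correct and takes a genuinely different route from the paper's. The paper expands $F^{(0,1)}(\pm ir)$ and $F^{(1,0)}(\pm ir)$ around $0$, uses the constraint $F(ir)-F(-ir)=0$ to eliminate $F^{(0,1)}(0)$ and the identity $F^{(1,1)}(0)=-iF^{(0,2)}(0)+2i\partial G(0)$ (together with $F(ir)=0$) to eliminate $F^{(1,1)}(0)$; since $A$ and $B$ are defined in terms of $F$-derivatives at $0$, they then fall out immediately from $\jac F=\mathrm{Im}(F^{(0,1)}\overline{F^{(1,0)}})$. You instead pass at once to $G$ via the closed form $\jac F(\pm ir)=|G''|^2-(1-r^4)|G|^2+2r^2\mathrm{Re}(G\overline{G''})$ on the event, and expand in the i.i.d.\ coefficients $g_k$. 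This is more concrete but forces you to back-translate $A,B$ (which are defined via $F$-derivatives) into the $g_k$'s to verify the $r^1$ and $r^2$ coefficients; your ``should give'' hides a computation comparable in length to the paper's. Either way works.

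Two minor points. First, your ``main obstacle'' --- that $\det\cov[F(ir),F(-ir)]$ vanishes like $r^2$ --- is not actually an obstacle for the tail bounds: conditional variance never exceeds unconditional variance regardless of the conditioning determinant, and that is all one needs to feed into Borell--TIS for the supremum terms (this is exactly what the paper does). Your even/odd decoupling is needed only for the \emph{algebraic} step of solving the constraints for $g_0,g_1$ with bounded coefficients, not for the probabilistic tail estimates. Second, your appeal to stretched-exponential tails of $\sup_{|z|\le 2}|G(z)|$ is stated as an unconditional fact, but you need it \emph{conditionally} on $F(\pm ir)=0$; this follows by the same variance-reduction argument (pointwise conditional variance is at most the unconditional one, so Dudley/Borell--TIS give uniform Gaussian tails for the conditioned field), and you should say so explicitly.
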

\begin{proof}
We let $r>0$ and expand $F(ir), F(-ir)$ around $0$ as
    \begin{equation*}
        F(ir) = F(0) + r F^{(0, 1)}(0) + \frac{r^2}{2} F^{(0, 2)}(0) + \frac{r^3}{6} F^{(0, 3)}(0) + r^4 E_1^{r},
    \end{equation*}
    \begin{equation*}
        F(-ir) = F(0) - r F^{(0, 1)}(0) + \frac{r^2}{2} F^{(0, 2)}(0) - \frac{r^3}{6} F^{(0, 3)}(0) + r^4 E_2^{r}.
    \end{equation*}
    Conditionally on 
    \begin{align}\label{eq_con}
    F(ir) = F(-ir) = 0
    \end{align}
    we get
    \begin{equation*}
        0 = F(ir) - F(-ir) = 2r F^{(0, 1)}(0) + 2 \frac{r^3}{6} F^{(0, 3)}(0) + r^4 E_1^{r} - r^4 E_2^{r},
    \end{equation*}
    and thus
    \begin{equation*}
        F^{(0, 1)}(0) = -\frac{r^2}{6} F^{(0, 3)}(0) + \frac{r^3}{2}(E_2^{r} - E_1^{r}).
    \end{equation*}
    Next, we expand first order derivatives and conclude that, conditionally on \eqref{eq_con},
    \begin{align}
            F^{(0, 1)}(ir) &= F^{(0, 1)}(0) + rF^{(0, 2)}(0) + \frac{r^2}{2} F^{(0, 3)}(0) + r^3 E_3^{r} \\\label{eq_x1}
            &= r F^{(0, 2)}(0) + \frac{r^2}{3} F^{(0, 3)}(0) + \frac{r^3}{2}(E_2^{r} - E_1^{r}) + r^3 E_3^{r},
  \\
            F^{(0, 1)}(-ir) &= F^{(0, 1)}(0) - rF^{(0, 2)}(0) + \frac{r^2}{2} F^{(0, 3)}(0) + r^3 E_4^{r} \\
            &= -r F^{(0, 2)}(0) + \frac{r^2}{3} F^{(0, 3)}(0) + \frac{r^3}{2}(E_2^{r} -E_1^{r}) + r^3 E_4^{r},
\\
        F^{(1, 0)}(ir) &= F^{(1, 0)}(0) + r F^{(1, 1)}(0) +  r^2 E_5^{r},
 \\
        F^{(1, 0)}(-ir) &= F^{(1, 0)}(0) - r F^{(1, 1)}(0) + r^2 E_6^{r}.
    \end{align}
    Recalling that $F(z) = \overline{z} G(z) - \partial G(z)$ and that $G$ is analytic, the Cauchy-Riemann equations give
    \begin{equation*}
        F^{(0, 2)}(0) = -2i G^{(0,1)} (0) - (\partial G)^{(0,2)}(0)\\
        = 2\partial G(0) + \partial^3 G(0),
    \end{equation*}
    and
    \begin{equation*}
        F^{(1, 1)}(0)  
        =G^{(0,1)}(0)-iG^{(1,0)}(0)-(\partial G)^{(1,1)}(0)
        =- i \partial^3 G(0),
    \end{equation*}
    which implies
    \begin{align}\label{eq_a1}
    F^{(1, 1)}(0) = -i F^{(0, 2)}(0) + 2 i\partial G(0).
    \end{align}
    We also expand
    \begin{equation*}
        \partial G(ir) = \partial G(0) + r E_7^{r}
    \end{equation*}
    and use \eqref{eq_a1} to conclude that, conditionally on $F(ir) = -ir G(ir) - \partial G(ir) = 0$, 
    \begin{align}
        F^{(1, 1)}(0) &= -i F^{(0, 2)}(0) +  2i\partial G(0) = -i F^{(0, 2)}(0) + i (2\partial G(ir) - 2rE_7^{r}) \\
        &= -i F^{(0, 2)}(0) + i(-2i r G(ir)- 2r E_7^{r}) = -i F^{(0, 2)}(0) + 2 r G(ir) - 2 i r E_7^{r}.
    \end{align}
    Thus, conditionally on \eqref{eq_con}, we have
    \begin{align*}
        F^{(1, 0)}(ir) &= F^{(1, 0)}(0) - ir  F^{(0, 2)}(0) +  r^2 (E_5^{r} + 2 G(ir) - 2 i E_7^{r}),\\
        F^{(1, 0)}(-ir) &= F^{(1, 0)}(0) + ir F^{(0, 2)}(0) + r^2 (E_6^{r} - 2G(ir) +2 i E_7^{r}).
    \end{align*}
    Combining this with \eqref{eq_x1}, we conclude that, conditionally on \eqref{eq_con},
    \begin{align}
        &\jac F(ir) = \mathrm{Im}(F^{(0, 1)}(ir) \overline{F^{(1, 0)}(ir)}) = r A + r^2 B + r^3 C_r^+,\\
        &\jac F(-ir) = \mathrm{Im}(F^{(0, 1)}(-ir) \overline{F^{(1, 0)}(-ir)}) = -r A + r^2 B + r^3 C_r^-,\\
        &\jac F(ir)\cdot\jac F(-ir) = r^2(-A^2 + r^2 B^2 + r^2 A D_r + r^3 F_r),
    \end{align}
    where $C_r^+, C_r^-, D_r$ and $F_r$ are given by a finite sum of products of $r, E_i^{r}, G(ir), F^{(n, m)}(0)$ for $i = 1, ..., 7$ and $(n, m) \in \{(1, 0), (0, 2), (0, 3)\}$.

    Finally, we note that all the error factors have stretched exponential tails. To this end, consider the jointly Gaussian, circularly symmetric, zero mean random variables
    $G(z)$, $\partial^2 G(z)$, $F^{(n, m)}(z), 0 \leq n,m \leq 4$, and enumerate them as $X_1(z), \ldots, X_N(z)$. Let
    \begin{align}
        &E = 1 + \max_{1 \leq j \leq N}
        \sup_{z \in \overline{\mathbb{D}}} |(X_j(z)\, |\, F(ir) = F(-ir) = 0)|.
    \end{align}
    Then there exist $n \in \mathbb{N}$ and $C>0$ independent of $r \in (0,1)$ such that
    \begin{equation*}
        |(Y | F(ir) = F(-ir) = 0)|  \leq C E^n,
        \qquad 
        Y \in \{A,B, C_r^\pm, D_r, F_r\}.
    \end{equation*}
     Hence, it is enough to show $E$ has Gaussian tails, with parameters independent of $r \in (0,1)$. This is the case by general facts concerning Gaussian processes with smooth covariances. Specifically, for $j=1,\ldots,N$, since conditioning zero-mean jointly Gaussian variables reduces their variance, we have
    \begin{align}
    \sup_{z \in \overline{\mathbb{D}}} \var[X_j(z)\,|\,F(ir) = F(-ir) = 0] \leq
    \sup_{z \in \overline{\mathbb{D}}} \var[X_j(z)] \leq C,
    \end{align}
    for a constant independent of $r$. Second, the Gaussian field $Z_j(\cdot):=X_j(\cdot)\,|\,\big(F(ir) = F(-ir) = 0\big)$ --- defined by Gaussian regression, since $(F(ir), F(-ir))$ has an invertible covariance matrix, cf. Section \ref{sec_prelim} --- has zero mean and smooth covariance, so Dudley's and the Borell-TIS inequalities \cite[Theorem 2.9 and 2.10]{level} \cite[Theorems 1.3.3 and 2.1.1]{adler}, imply that $\sup_{z\in\overline{\mathbb{D}}} |Z_j(z)|$ has Gaussian tails with parameters independent of $r$. Of course, the same conclusion extends to $E$. (A similar argument was used in \cite[Proposition 1]{ladgham2023local}.)
\end{proof}

\begin{coro}\label{coro_rho4}
With the notation of Proposition \ref{approxKacRice}, let $(\mathcal{N}_\rho, \sigma)$ be given by $(\mathcal{N}_\rho^c, \sigma^{c}), (\Ncp, \sigma^{c,+})$ or $(\Ncm, \sigma^{c, -})$. Then
\begin{align}\label{eq_achan2}
    &\mathbb{E}[\mathcal{N}_\rho \cdot (\mathcal{N}_\rho - 1)] \lesssim \rho^4, \qquad 0<\rho < 1.
\end{align}
\end{coro}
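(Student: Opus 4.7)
The plan is to combine the upper bound in Proposition~\ref{approxKacRice} with the expansion of $\jac F(ir)\jac F(-ir)$ from Proposition~\ref{condjac}. The key intermediate estimate I aim for is the pointwise bound $\sigma(r)\lesssim r^{2}$, uniformly for $r\in(0,1)$, for each of the three choices of $\sigma$. Once this is in hand, substituting into the right-hand inequality in \eqref{eq_achan} immediately gives
\[
\mathbb{E}[\mathcal{N}_\rho\cdot(\mathcal{N}_\rho-1)]\lesssim \rho^{2}\int_{0}^{\rho}\sigma(r)\,r^{-1}\,dr\lesssim \rho^{2}\int_{0}^{\rho} r\,dr\lesssim \rho^{4},
\]
which is the desired conclusion.

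First I would observe that it is enough to bound $\sigma^{c}(r)$: the integrands defining $\sigma^{c,\pm}(r)$ in \eqref{eq_crp}, \eqref{eq_crm} differ from the one in \eqref{eq_cr} only by an indicator, so trivially $\sigma^{c,\pm}(r)\leq \sigma^{c}(r)$. Next I would use the factored expansion from Proposition~\ref{condjac}(i),
\[
\jac F(ir)\cdot\jac F(-ir)=r^{2}\bigl(-A^{2}+r^{2}B^{2}+r^{2}A D_{r}+r^{3}F_{r}\bigr),
\]
valid conditionally on $F(ir)=F(-ir)=0$. The triangle inequality and the conditional expectation yield
\[
\sigma^{c}(r)\leq r^{2}\bigl(\mathbb{E}_{0}[A^{2}]+r^{2}\mathbb{E}_{0}[B^{2}]+r^{2}\mathbb{E}_{0}[|AD_{r}|]+r^{3}\mathbb{E}_{0}[|F_{r}|]\bigr),
\]
where $\mathbb{E}_{0}$ denotes expectation conditioned on $F(ir)=F(-ir)=0$.

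The last step is to show that the bracketed quantity is bounded by a constant independent of $r\in(0,1)$. This follows from Proposition~\ref{condjac}(ii): stretched exponential tails with constants uniform in $r$ imply that every moment of $A$, $B$, $D_{r}$, $F_{r}$ under $\mathbb{E}_{0}$ is bounded uniformly in $r$. In particular Cauchy--Schwarz handles the cross term $\mathbb{E}_{0}[|AD_{r}|]\leq \mathbb{E}_{0}[A^{2}]^{1/2}\mathbb{E}_{0}[D_{r}^{2}]^{1/2}$, and since $r\in(0,1)$ the extra factors $r^{2}$, $r^{3}$ are harmless. This delivers $\sigma^{c}(r)\lesssim r^{2}$.

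There is no real obstacle here: the heavy lifting (Kac--Rice integration and Taylor expansion with controlled Gaussian remainders) has been packaged into Propositions~\ref{approxKacRice} and \ref{condjac}, and the corollary is a mechanical combination of the upper bound in \eqref{eq_achan} with the size estimate on $\sigma^{c}(r)$.
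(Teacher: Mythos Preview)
Your proof is correct and follows essentially the same approach as the paper: bound $\sigma(r)\lesssim r^{2}$ via the expansion and uniform tail estimates of Proposition~\ref{condjac}, then plug into the upper bound of \eqref{eq_achan}. Your explicit observation that $\sigma^{c,\pm}\leq\sigma^{c}$ makes the reduction to a single case slightly cleaner than the paper's one-line treatment, but the argument is otherwise identical.
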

\begin{proof}
We use the expansion and tail estimates in Proposition \ref{condjac} to bound, for $0<r < 1$,
\begin{align}
\sigma(r) 
&\lesssim \mathbb{E}\big[|\jac F(ir)\cdot\jac F(-ir)| \,\big|\, F(ir) = F(-ir)= 0\big]
\\
&\leq r^2 \cdot \mathbb{E}\big[|(-A^2 + r^2 B^2 + r^2 A D_r + r^3 F_r)| \,\big|\, F(ir) = F(-ir)= 0\big]
\lesssim r^2.
\end{align}
Hence, \eqref{eq_achan} gives 
$\mathbb{E}[\mathcal{N}_\rho \cdot (\mathcal{N}_\rho - 1)] \lesssim \rho^2 \int_0^{\rho} r dr \lesssim \rho^4$.
\end{proof}

\subsection{Asymptotic correlations among conditioned derivatives}
We continue by exploring the proxy variables $A$ and $B$ and look into the asymptotic correlations between the vectors that define them.
\begin{lemma}\label{conddens}
The covariance matrix of $(F^{(1, 0)}(0), F^{(0, 2)}(0), F^{(0, 3)}(0))$ conditioned on $F(ir) = F(-ir) = 0$ converges, as $r \to 0^+$, to  
\begin{align}\label{eq_M0}
M^0 := \begin{pmatrix}
\frac{8}{3} & 0 & 4 i \\ 
0 & 6 & 0 \\
-4 i & 0 & 30
\end{pmatrix}.
\end{align}
As a consequence:
\begin{itemize}
\item[(a)] If $h: \mathbb{C}^3 \to \mathbb{C}$ is measurable and has at most polynomial growth, then
\begin{align}\label{eq_opopa}
\lim_{r\to0^+} \mathbb{E} \big[h(F^{(1, 0)}(0), F^{(0, 2)}(0), F^{(0, 3)}(0))\,\big|\, F(ir) = F(-ir) = 0 \big] =
\mathbb{E} \big[h(Z_1, Z_2, Z_3)\big],
\end{align}
where $(Z_1,Z_2,Z_3) \sim \mathcal{N}_{\mathbb{C}}(M^0)$.

\item[(b)] There exist $\alpha,\beta,\delta,c,C>0$ such that, if $h:\mathbb{C}^3 \to [0,+\infty)$ is measurable, then, for $0<r<\delta$:
\begin{align}\label{eq_opop}
c\cdot\mathbb{E} \big[h(\alpha Z_1, \alpha Z_2, \alpha Z_3)\big] \leq
\mathbb{E} \big[h(F^{(1, 0)}(0), F^{(0, 2)}(0), F^{(0, 3)}(0))\,\big|\, F(ir) = F(-ir) = 0\big]
\\
\leq C\cdot \mathbb{E} \big[h(\beta Z_1, \beta Z_2, \beta Z_3)\big],
\end{align}
where $(Z_1,Z_2,Z_3) \sim \mathcal{N}_{\mathbb{C}}(I)$.

\item[(c)] In particular, in the situation of $(b)$, if $h$ is $k$-homogenoues, $k \in \mathbb{N}$, then
\begin{align}\label{eq_opopb}
\mathbb{E} \big[h(F^{(1, 0)}(0), F^{(0, 2)}(0), F^{(0, 3)}(0))\,\big|\, F(ir) = F(-ir) = 0\big]
\asymp \mathbb{E} \big[h(Z_1, Z_2, Z_3)\big],
\end{align}
where $(Z_1,Z_2,Z_3) \sim \mathcal{N}_{\mathbb{C}}(I)$ and the implied constants depend on $k$.
\end{itemize}
\end{lemma}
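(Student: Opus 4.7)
The plan is to reduce the conditioning $\{F(ir)=F(-ir)=0\}$ to the limiting conditioning $\{F(0)=F^{(0,1)}(0)=0\}$ via a linear change of variables in the span of the conditioning vector, and then invoke the complex Gaussian regression formula both to identify $M^0$ explicitly and to extract parts (a)--(c). Let $X = (F^{(1,0)}(0), F^{(0,2)}(0), F^{(0,3)}(0))$ and $Y_r = (F(ir), F(-ir))$, and set
\begin{align}
\tilde Y_r := \Big( \tfrac{F(ir)+F(-ir)}{2},\; \tfrac{F(ir)-F(-ir)}{2r} \Big).
\end{align}
Since $\tilde Y_r$ spans the same two-dimensional linear space as $Y_r$, conditioning on $Y_r=0$ is equivalent to conditioning on $\tilde Y_r=0$. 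The Taylor expansions carried out in the proof of Proposition \ref{condjac} show that $\tilde Y_r \to Z := (F(0), F^{(0,1)}(0))$ in $L^2$ as $r \to 0^+$. Using the identities $F(0)=-\partial G(0)$ and $F^{(0,1)}(0)=-i(G(0)+\partial^2 G(0))$ together with the orthonormality $\mathbb{E}[\partial^j G(0)\,\overline{\partial^k G(0)}]=k!\,\delta_{jk}$, one finds $\cov(Z)=\mathrm{diag}(1,3)$, which is non-singular.

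Since $(X,\tilde Y_r)$ is a circularly symmetric jointly complex Gaussian vector (as $G\mapsto e^{i\theta}G$ preserves the law of $G$ and hence multiplies $F$ and all its derivatives by $e^{i\theta}$), the regression formula
\begin{align}
\cov(X\mid \tilde Y_r=0) = \cov(X) - \cov(X,\tilde Y_r)\,\cov(\tilde Y_r)^{-1}\,\cov(\tilde Y_r,X)
\end{align}
applies for every $r$ small enough that $\cov(\tilde Y_r)$ is non-singular. The $L^2$-convergence $\tilde Y_r\to Z$ gives entrywise convergence of every covariance above, and since $\cov(Z)$ is non-singular, $\cov(\tilde Y_r)^{-1}\to \cov(Z)^{-1}$. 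Hence $M^r:=\cov(X\mid F(ir)=F(-ir)=0)$ converges to $M^0:=\cov(X)-\cov(X,Z)\cov(Z)^{-1}\cov(Z,X)$. To identify $M^0$ with the matrix in \eqref{eq_M0}, one uses $F^{(1,0)}(0)=G(0)-\partial^2 G(0)$, $F^{(0,2)}(0)=2\partial G(0)+\partial^3 G(0)$, and $F^{(0,3)}(0)=i(3\partial^2 G(0)+\partial^4 G(0))$ (derived via the Cauchy--Riemann equations in the same spirit as in the proof of Proposition \ref{condjac}). All required covariances then collapse to $\mathbb{E}[\partial^j G(0)\,\overline{\partial^k G(0)}]=k!\,\delta_{jk}$, and entry-by-entry evaluation of the $3\times 3$ regression formula reproduces \eqref{eq_M0}; for example, $M^0_{11}=3-(0,-i)\,\mathrm{diag}(1,\tfrac13)\,(0,i)^\top=3-\tfrac13=\tfrac83$.

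To obtain (a)--(c), note first that $M^0$ has positive leading principal minors $\tfrac83$, $16$, and $384$, hence is Hermitian positive definite. By continuity of $r\mapsto M^r$, there exist $\delta>0$ and $0<\alpha<\beta$ such that $\alpha^2 I \preceq M^r \preceq \beta^2 I$ for all $r\in(0,\delta)$. For (a), writing $X^r\sim \mathcal{N}_\mathbb{C}(M^r)$, convergence in distribution $X^r\to X^0\sim\mathcal{N}_\mathbb{C}(M^0)$ combined with uniform bounds on the moments $\mathbb{E}|X^r|^p$ (consequences of $M^r\preceq \beta^2 I$, with constants depending on $p$) yields uniform integrability of $h(X^r)$ whenever $h$ has polynomial growth, and hence \eqref{eq_opopa}. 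For (b), the operator inequalities translate into the pointwise estimates $\beta^{-2}|y|^2 \leq y^*(M^r)^{-1}y \leq \alpha^{-2}|y|^2$ and $\alpha^6 \leq \det M^r \leq \beta^6$ for the density of $\mathcal{N}_\mathbb{C}(M^r)$; comparing with the densities of $\mathcal{N}_\mathbb{C}(\alpha^2 I)$ and $\mathcal{N}_\mathbb{C}(\beta^2 I)$ and integrating against $h\geq 0$ yields \eqref{eq_opop}. Part (c) is immediate from (b) and the scaling identity $\mathbb{E}[h(\lambda Z)]=\lambda^k \mathbb{E}[h(Z)]$ for $k$-homogeneous $h$.

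The main obstacle is purely computational: the Wirtinger rewrites of $F^{(0,n)}(0)$ in terms of $\partial^k G(0)$, the sign flips introduced by $y$-derivatives and by complex conjugation, and the $3\times 3$ regression bookkeeping collectively offer many opportunities for arithmetic slips in the identification of the entries of $M^0$, though no conceptual difficulty arises once the reduction above is in place.
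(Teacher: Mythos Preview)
Your approach is correct and takes a somewhat different route to the limit $M^0$ than the paper. The paper computes the full $5\times 5$ covariance of $(F(ir),F(-ir),X)$ explicitly as a function of $r$ (the matrices $M_1^r,M_2^r,M_3^r$), applies the regression formula to obtain a closed-form expression for $M^r$, and only then lets $r\to 0^+$. You instead perform an invertible linear change in the conditioning variables, replacing $(F(ir),F(-ir))$ by $\tilde Y_r$, which converges in $L^2$ to $Z=(F(0),F^{(0,1)}(0))$; this lets you compute the regression only once, at the limit, and avoids manipulating $r$-dependent matrices and their inverses. Both arguments yield the same $M^0$, and your method is cleaner for identifying the limit; on the other hand, the paper's explicit formula for $M^r$ could in principle give rates of convergence that your soft argument does not. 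For (b) and (c) your density-comparison argument is essentially the same as the paper's.

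One small gap in your treatment of (a): convergence in distribution together with uniform integrability of $h(X^r)$ guarantees $\mathbb{E}[h(X^r)]\to\mathbb{E}[h(X^0)]$ only for $h$ continuous (or a.s.\ continuous under the limit law), not for arbitrary measurable $h$ as stated. To cover the measurable case you should argue directly with densities: since $M^r\to M^0$ and $M^0$ is non-singular, the densities $f^r$ of $X^r$ converge pointwise to $f^0$, and the bounds $\alpha^2 I\preceq M^r\preceq \beta^2 I$ that you already derive for (b) dominate $f^r$ pointwise by a fixed Gaussian density. Dominated convergence applied to $\int h(y)f^r(y)\,dy$ then yields \eqref{eq_opopa} for any measurable $h$ of polynomial growth; this is exactly how the paper handles (a).
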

\begin{proof}
    A direct computation shows that the covariance matrix of the vector \[(F(ir), F(-ir), F^{(1, 0)}(0), F^{(0, 2)}(0), F^{(0, 3)}(0))\] is
    \begin{equation}\label{eq_mtotr}
        \begin{pmatrix}
            M_1^r & M_2^r \\
            {(M_2^r)^*} & M_3^r
        \end{pmatrix},
    \end{equation}
    where
    \begin{equation}\label{eq_m1r}
        M_1^r =  \begin{pmatrix}
            e^{r^2} & e^{-r^2}(1 - 4r^2) \\
            e^{-r^2}(1 - 4r^2) & e^{r^2}
        \end{pmatrix},
    \end{equation}
    \begin{equation}\label{eq_m2r}
        M_2^r = \begin{pmatrix}
            ir(1-r^2) & -2+5r^2 - r^4 & -6r + 7r^3 - r^5 \\
            - i r( 1-r^2) & -2 + 5r^2 -r^4 & 6 r - 7r^3 + r^5 
        \end{pmatrix},
    \end{equation}
    \begin{equation}\label{eq_m3r}
        M_3^r = \begin{pmatrix}
            3 & 0 & 6i \\
            0 & 10 & 0 \\
            -6i & 0 & 42& 
        \end{pmatrix},
    \end{equation}
    see Lemma \ref{matrixcomp} in the appendix or \cite{hor25work}. Let $X^r$ denote the conditioned vector
 \[X^r := \big(\,(F^{(1, 0)}(0), F^{(0, 2)}(0), F^{(0, 3)}(0))\,\big|\,F(ir) = F(-ir) = 0\,\big),\] defined by Gaussian regression. Its covariance is
    \begin{align}
        M^r &= M_3^r - {(M_2^r)^*} (M_1^r)^{-1} M_2^r \\
        &= \begin{pmatrix} 3 - \frac{2 e^{r^2} r^2 (1-r^2)^2}{-1 + e^{2r^2} + 4r^2} & 0 & 6i + 2i \frac{e^{r^2}r^2 (-6+ r^2)(r^2 - 1)^2}{-1+e^{2r^2}+4r^2}\\
        0 & 10 - \frac{2e^{r^2}(2-5r^2+r^4)^2}{1+e^{2r^2}-4r^2} & 0 \\
        -6i -2i\frac{e^{r^2}r^2 (-6+ r^2)(r^2 - 1)^2}{-1+e^{2r^2}+4r^2} & 0 & 42 - \frac{2e^{r^2}r^2(6-7r^2+r^4)^2}{-1+e^{2r^2}+4r^2} \end{pmatrix},
    \end{align}
    which indeed converges to $M^0$ as $r\to0^+$ (this can be verified directly or with \cite{hor25work}). Since $M^0$ is positive definite, there exist $\delta>0$ and constants $\tau_1, \tau_2>0$ such that (in the Loewner order),
    \begin{align}
    \tau_1 I \leq M^r \leq \tau_2 I, \qquad 0<r<\delta.
    \end{align}
    It follows that $X^r$ is an absolutely continuous random variable for $0<r<\delta$; let $f^r$ denote its probability density. Moreover, if $f$ denotes the probability density of a standard complex vector on $\mathbb{C}^3$, we also have
    \begin{align}\label{eq_kljlkl}
    c f(z/\alpha) \leq f^r(z) \leq C f(z/\beta), \qquad z=(z_1,z_2,z_3) \in \mathbb{C}^3,
    \end{align}
      where $\alpha,\beta,c,C>0$ are absolute constants.

Part (b) now follows easily: if $Z=(Z_1,Z_2,Z_3) \sim \mathcal{N}_{\mathbb{C}}(I)$, then
    \begin{align}\label{eq_kljlkl2}
    &\mathbb{E} \big[h(F^{(1, 0)}(0), F^{(0, 2)}(0), F^{(0, 3)}(0))\,\big|\, F(ir) = F(-ir) = 0\big]
    \\
    &\quad = \int_{\mathbb{C}^3} h(z_1,z_2,z_3) f^r(z_1,z_2,z_3) \,dA(z_1)\,dA(z_2)\,dA(z_3)
    \end{align}
    and we use \eqref{eq_kljlkl} to estimate
    \begin{align}
    &\mathbb{E} \big[h(F^{(1, 0)}(0), F^{(0, 2)}(0), F^{(0, 3)}(0))\,\big|\, F(ir) = F(-ir) = 0]
    \\
    &\quad \leq C
    \int_{\mathbb{C}^3} h(z_1,z_2,z_3) f(z_1/\beta,z_2/\beta,z_3/\beta) \,dA(z_1)\,dA(z_2)\,dA(z_3)
    \\
    &\quad = C \beta^6 \int_{\mathbb{C}^3} h(\beta z_1, \beta z_2, \beta z_3) f(z_1,z_2,z_3) \,dA(z_1)\,dA(z_2)\,dA(z_3)
    \\
    &\quad = C \beta^6 \,\mathbb{E} \big[h(\beta Z_1, \beta Z_2, \beta Z_3)\big].
    \end{align}
    The lower bound in \eqref{eq_opop} follows similarly. Part (c) follows immediately from part (b).
    As for part (a), if $Z=(Z_1,Z_2,Z_3) \sim \mathcal{N}_{\mathbb{C}}(M^0)$, then $f^r \to f_{Z}$ almost everywhere as $r \to 0^+$. We use \eqref{eq_kljlkl}, together with the fact that $h$ grows at most polynomially, to exchange the limit with integration in 
    \eqref{eq_kljlkl2}, which yields \eqref{eq_opopa}.
    \end{proof}

\section{Critical points}\label{sec_cp1}
As a warm-up, we derive the local correlations among the critical points of GEF. The following proposition is essentially the main of result \cite{MR2966361} (though the setting there is slightly different).
\begin{prop} \label{repc}
    There exist constants $c, C > 0$ such that
    \begin{align}\label{eq_repca}
        c \rho^4 \leq \mathbb{E}[\Nc \cdot (\Nc - 1)] \leq C \rho^4, \qquad 0 < \rho < 1.
    \end{align}
In addition, 
\begin{align}\label{eq_repcb}
\lim_{\rho \to 0^+} \frac{\mathbb{E}[\Nc \cdot (\Nc - 1)]}{(\mathbb{E}[\Nc])^2} = \frac{6}{25}.
\end{align}
 \end{prop}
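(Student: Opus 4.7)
The strategy is to extract the precise leading-order behavior of $\sigma^c(r)$ as $r \to 0^+$, then substitute into the exact Kac--Rice identity \eqref{eq_kr_exact} to simultaneously obtain the $\asymp \rho^4$ bounds and the precise ratio $6/25$. First I would apply Proposition \ref{condjac} to write
\begin{align*}
\sigma^c(r)/r^2 = \mathbb{E}\bigl[\,\bigl|A^2 - r^2(B^2 + A D_r + r F_r)\bigr| \,\big|\, F(ir)=F(-ir)=0\bigr],
\end{align*}
and combine the stretched exponential tails in Proposition \ref{condjac}(ii) with the weak convergence of the conditional law of $(F^{(1,0)}(0),F^{(0,2)}(0),F^{(0,3)}(0))$ to $\mathcal{N}_{\mathbb{C}}(M^0)$ from Lemma \ref{conddens}(a), passing to the limit by dominated convergence. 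Since the $(1,2)$-entry of $M^0$ vanishes, the two relevant components are independent complex Gaussians $Z_1 \sim \mathcal{N}_{\mathbb{C}}(8/3)$ and $Z_2 \sim \mathcal{N}_{\mathbb{C}}(6)$, so $\tilde A := \mathrm{Im}(Z_2 \overline{Z_1})$ satisfies $\mathbb{E}[\tilde A^2] = \tfrac{1}{2}\mathbb{E}|Z_1|^2 \, \mathbb{E}|Z_2|^2 = \tfrac{1}{2}\cdot\tfrac{8}{3}\cdot 6 = 8$ by a standard polar computation. Hence $\sigma^c(r) \sim 8 r^2$ as $r\to 0^+$.

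Next, a Taylor expansion at $s=|z-w|=0$ gives $e^{s^2/2}\bigl(1 - e^{-s^2}(1-s^2)^2\bigr) = 3 s^2 + O(s^4)$, so the integrand in \eqref{eq_kr_exact} converges uniformly on $B_\rho^2$ (as $\rho\to 0^+$) to the constant $\tfrac{8(s/2)^2}{3\pi^2 s^2} = \tfrac{2}{3\pi^2}$. Integrating over $B_\rho^2$, of total area $\pi^2\rho^4$, yields
\begin{align*}
\mathbb{E}[\Nc(\Nc-1)] \sim \tfrac{2}{3}\rho^4, \qquad \rho\to 0^+.
\end{align*}
Combined with $(\mathbb{E}\Nc)^2 = \tfrac{25}{9}\rho^4$, the ratio tends to $(2/3)/(25/9) = 6/25$, proving \eqref{eq_repcb}. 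For the two-sided bound \eqref{eq_repca}, the upper bound $\mathbb{E}[\Nc(\Nc-1)] \lesssim \rho^4$ on the full range $\rho\in(0,1)$ is already Corollary \ref{coro_rho4}; the lower bound $\gtrsim \rho^4$ for small $\rho$ follows from the asymptotic itself, and for $\rho\in[\rho_0, 1)$ it follows from monotonicity of $\rho\mapsto \mathbb{E}[\Nc(\rho)(\Nc(\rho)-1)]$ (since $\Nc(\rho)$ is integer-valued, nondecreasing in $\rho$, and $x\mapsto x(x-1)$ is nondecreasing on integers $\geq 1$), together with $\rho^4 \leq 1$.

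The main technical obstacle is justifying the uniform-in-$r$ dominated convergence required to identify $\lim_{r\to 0^+}\sigma^c(r)/r^2$, which is precisely why Proposition \ref{condjac}(ii) was phrased with explicit stretched-exponential tail bounds; the analogous exchange of the pointwise diagonal limit with the double area integral in \eqref{eq_kr_exact} is routine once both numerator and denominator have been shown to admit matching positive quadratic behavior near $|z-w| = 0$.
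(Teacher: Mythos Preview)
Your proposal is correct and follows essentially the same route as the paper: the upper bound via Corollary \ref{coro_rho4}, the expansion of $\sigma^c(r)$ via Proposition \ref{condjac}, the identification of the limiting constant $\mathbb{E}[\tilde A^2]=8$ via Lemma \ref{conddens}(a), substitution into the exact Kac--Rice identity \eqref{eq_kr_exact} together with the denominator asymptotic $\sim 3|z-w|^2$, and finally the monotonicity argument to propagate the lower bound from small $\rho$ to the full range $(0,1)$. The paper's computation is organized slightly differently (it writes the limit of $\rho^{-4}\mathbb{E}[\Nc(\Nc-1)]$ directly as the diagonal limit of the integrand, the factors of $\pi^2$ cancelling), but the ingredients and logic are the same.
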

\begin{proof}
The upper bound in \eqref{eq_repca} follows from Corollary \ref{coro_rho4}. As $\Nc\cdot(\Nc - 1)$ is an increasing function of $\rho$, in order to prove the lower bound in \eqref{eq_repca} it is enough to consider sufficiently small but positive $\rho$. The first order moment is
$\mathbb{E}[\Nc] = \tfrac{5}{3}  \rho^2$,
as follows from a direct calculation with Kac-Rice's formula, see \cite[Corollary 1.7]{hkr22}. Hence, to complete the proof of \eqref{eq_repca} and \eqref{eq_repcb}, we must show that
\begin{align}
\lim_{\rho \to 0^+} \frac{\mathbb{E}[\Nc \cdot (\Nc - 1)]}{\rho^4} = \frac{2}{3}.
\end{align}
We use the exact version of the Kac-Rice formula \eqref{eq_kr_exact}, with $\sigma=\sigma^{c}$ given by \eqref{eq_cr},
    which by Proposition \ref{condjac} satisfies
    \begin{align}
        \sigma^{c}(r) = r^2 \cdot \mathbb{E}\big[A^2 \big|F(ir) = F(-ir) = 0\big] + O(r^4), \qquad 0 < r< 1,
    \end{align}
    as
    $
    \mathbb{E}\big[|B^2 + A D_r + r F_r| \,|\, F(ir) = F(-ir) = 0\big] = \Ordo(1)$, for $0 < r \leq 1$.
    Recalling the definition of $A$, by Lemma \ref{conddens}, if $Z_1, Z_2$ are independent complex variables
    with variances $8/3$ and $6$ respectively, we have
    \begin{align}
    \mathbb{E}\big[A^2 \big|F(ir) = F(-ir) = 0\big] \sim 
    \mathbb{E}\big[\big(\mathrm{Im} (Z_2 \overline{Z_1}) \big)^2\big] = 
    \frac{1}{2} \frac{8}{3} 6 = 8, \qquad \mbox{as } r\to0^+.
    \end{align}
    Hence, $\sigma^c(r) \sim 8r^2$ and
  \begin{align}
  &\lim_{\rho \to 0^+} \frac{1}{\rho^4} \mathbb{E}[\Nc\cdot(\Nc - 1)]
  = \lim_{z,w \to 0}
  \frac{\sigma^c({|z-w|}/{2})}{e^{\frac{|z-w|^2}{2}} \big(1- e^{-|z-w|^2}(1 - |z-w|^2)^2\big) }
  \\
  &\qquad=\lim_{r \to 0^+}
  \frac{\sigma^c(r)}{e^{{2r^2}} \big(1- e^{-4r^2}(1 -4r^2)^2\big) }
  =\lim_{r \to 0^+}
  \frac{8 r^2}{e^{{2r^2}} \big(1- e^{-4r^2}(1 -4r^2)^2\big) } = \frac{2}{3}.\qedhere
  \end{align} 
\end{proof}
Note that Proposition \ref{repc} only uses the first order of the asymptotic expansions in Proposition \ref{condjac} and only a subset of the correlations provided by Lemma \ref{conddens}. The more refined information provided by Proposition \ref{condjac} and Lemma 
\ref{conddens} will be required to study critical points with a given index.

\section{Critical points with positive index}\label{sec_cps}
We delve into the correlation among critical points of GEFs and start with those with positive index.
As a first step, we study the approximate intensity given by Proposition \ref{approxKacRice}.
\begin{lemma} \label{approxp}
    Let $\sigma^{c, +}(r)$ be the approximate intensity \eqref{eq_crp} and $0 <\alpha < 1$. Then
    \begin{equation*}
        \frac{\sigma^{c, +}(r)}{r^2} = \mathbb{E}\big[(-A^2 + r^2 B^2)\mathrm{1}_{|A| < r B} \,\big |\, F(ir) = F(-ir) = 0\big] + O(r^{3+\alpha}),
        \qquad 0<r<1,
    \end{equation*}
\end{lemma}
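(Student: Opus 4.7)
The plan is to use the expansions from Proposition \ref{condjac} to rewrite $\sigma^{c,+}(r)$ and then control the error from replacing the resulting expression with the target. Since $\jac F(ir), \jac F(-ir) > 0$ implies $\jac F(ir) \cdot \jac F(-ir) > 0$, the absolute value in \eqref{eq_crp} can be dropped; substituting the expansions from Proposition \ref{condjac}(i) yields
\begin{align}
\frac{\sigma^{c,+}(r)}{r^2} = \mathbb{E}\big[(-A^2 + r^2 B^2 + r^2 A D_r + r^3 F_r)\,\mathrm{1}_{\mathcal{E}_r} \,\big|\, F(ir) = F(-ir) = 0\big],
\end{align}
where $\mathcal{E}_r = \{-rB - r^2 C_r^+ < A < rB + r^2 C_r^-\}$. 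It remains to show that the error from replacing the integrand by $-A^2 + r^2 B^2$ and the indicator $\mathrm{1}_{\mathcal{E}_r}$ by $\mathrm{1}_{|A| < rB}$ is of order $O(r^{3+\alpha})$.

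The key observation is that on $\mathcal{E}_r$ one has $|A| \leq r|B| + r^2 \max(|C_r^+|, |C_r^-|)$, so $A$ is effectively of order $r$ on this event, and $\mathbb{P}[\mathcal{E}_r] = O(r)$: the event is a strip of width $O(r|B| + r^2)$ in $A$, which has a bounded density uniformly in $r$ (via Lemma \ref{conddens}, using that the underlying Gaussian vector is non-degenerate and $A$ is a non-trivial polynomial functional of it). Combining this with the uniform stretched exponential tails from Proposition \ref{condjac}(ii) and H\"older's inequality with exponent $s > 1$ chosen so that $1/s = \alpha$, one obtains $|\mathbb{E}[r^3 F_r \mathrm{1}_{\mathcal{E}_r}]| \lesssim r^3 \|F_r\|_q \mathbb{P}[\mathcal{E}_r]^{1/s} \lesssim r^{3+\alpha}$, and similarly, using $|A| \mathrm{1}_{\mathcal{E}_r} \lesssim r(|B| + r \max(|C_r^\pm|)) \mathrm{1}_{\mathcal{E}_r}$, $|\mathbb{E}[r^2 A D_r \mathrm{1}_{\mathcal{E}_r}]| \lesssim r^3 \mathbb{E}[(|B|+1)|D_r| \mathrm{1}_{\mathcal{E}_r}] \lesssim r^{3+\alpha}$.

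For the indicator replacement, the symmetric difference $\mathcal{E}_r \triangle \{|A| < rB\}$ consists of two strips in $A$ of width $O(r^2 \max(|C_r^+|, |C_r^-|))$ near $A = \pm rB$; hence its probability is $O(r^2)$. On this difference, the factorization $-A^2 + r^2 B^2 = (rB - |A|)(rB + |A|)$ gives $|-A^2 + r^2 B^2| \lesssim r^2 \max(|C_r^\pm|) \cdot (rB + r^2 \max(|C_r^\pm|))$. Combining via Cauchy-Schwarz with the probability bound and the stretched exponential tails yields a contribution of order $O(r^4)$, absorbed into the required error.

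The main obstacle is the $r^2 A D_r$ term in the integrand replacement: a naive bound $|A| = O(1)$ would give only $O(r^3)$, missing the target. One must exploit the constraint $|A| = O(r)$ enforced on $\mathcal{E}_r$ to push the bound to $O(r^{3+\alpha})$. The restriction $\alpha < 1$ arises precisely from the H\"older loss when converting $\mathbb{P}[\mathcal{E}_r] \lesssim r$ into $\mathbb{P}[\mathcal{E}_r]^{1/s} \lesssim r^{1/s}$ with $s > 1$, so the full exponent $r^{3+1}$ cannot be reached by this argument.
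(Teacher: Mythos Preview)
Your approach is essentially the same as the paper's: decompose the error into (i) the extra integrand terms $r^2 A D_r + r^3 F_r$ and (ii) the indicator replacement $\mathrm{1}_{\mathcal{E}_r} \to \mathrm{1}_{|A|<rB}$, then exploit that on $\mathcal{E}_r$ one has $|A| \lesssim r(|B|+r|C_r^\pm|)$ and that the symmetric difference is contained in the two thin strips $\{|A+rB|\le r^2|C_r^+|\}\cup\{|A-rB|\le r^2|C_r^-|\}$, and finish with H\"older.

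There is, however, a technical imprecision you should fix. You assert that $A$ ``has a bounded density'' because it is a non-trivial polynomial functional of a non-degenerate Gaussian vector. This is false: $A=\mathrm{Im}(Z_2\overline{Z_1})$ for independent complex Gaussians $Z_1,Z_2$ has density $\asymp \log(1/|a|)$ near $a=0$, so $\mathbb{P}[|A|<\varepsilon]\asymp \varepsilon(1+|\log\varepsilon|)$, not $O(\varepsilon)$. Likewise your claim $\mathbb{P}[\mathcal{E}_r\triangle\{|A|<rB\}]=O(r^2)$ is off by a log (and the random width $r^2|C_r^\pm|$ requires a truncation $|C_r^\pm|\le r^{-\eta}$ before you can apply a small-ball bound in $A$). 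The paper handles this via Lemma~\ref{lemappxdirect}, which gives precisely the $\varepsilon(1+|\log\varepsilon|)$ bound for $\mathbb{P}[|\mathrm{Im}(Z_2\overline{Z_1})+\cdots|<\varepsilon]$, together with the splitting $\{|C_r^\pm|\le r^{-\eta}\}\cup\{|C_r^\pm|>r^{-\eta}\}$. Since you ultimately only need $\mathbb{P}[\mathcal{E}_r]^{1/s}\lesssim r^\alpha$ with $\alpha<1$, the log factor is harmless and your argument survives once you substitute the correct small-ball estimate; but the bounded-density claim as stated is wrong and should be replaced.
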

where the implied constant depends on $\alpha$.
\begin{proof}
With the notation of Proposition, \ref{condjac}, we have
    \begin{align*}
        \frac{\sigma^{c, +}(r)}{r^2} = \mathbb{E}\big[(-A^2 + r^2 B^2) \mathrm{1}_{\jac F(ir) > 0} \mathrm{1}_{\jac F(-ir) > 0} \,\big |\, F(ir) = F(-ir) = 0 \big] + b_r,
    \end{align*}
    where
    \begin{align*}
        b_r = \mathbb{E}\big[(r^2 A D_r + r^3 F_r) \mathrm{1}_{\jac F(ir) > 0} \mathrm{1}_{\jac F(-ir) > 0} \,\big |\, F(ir) = F(-ir) = 0 \big].
    \end{align*}
    Proposition \ref{condjac} implies that $|b_r| \lesssim 1$ for $0<r<1$, so we can focus on the small $r$ range. We fix
    $0<\alpha<1$ and let constants depend on it.

    \medskip \noindent {\bf Step 1}. We first show that 
    \begin{align}\label{eq_z6}
    |b_r| \lesssim r^{3+\alpha}.
    \end{align}
Inspecting the expansion of $\jac F(\pm ir)$ given by Proposition \ref{condjac}, we see that $\jac F(ir) > 0$ and $\jac F(-ir) > 0$ 
    occur simultaneously if and only if
    \begin{align*}
    A> -(rB+r^2C_r^+) \mbox{ and }
    -A> -(rB+r^2C_r^-).
    \end{align*}
    In this case
    $-A < rB + r^2 C_r^+$ and $
    A < rB+r^2C_r^-$, which gives
    \begin{equation*}
        |A| < r |B| + r^2 |C_r^+| + r^2 |C_r^-|.
    \end{equation*}
    Thus,
    \begin{align*}
        |b_r| \leq r^3 \mathbb{E}\big[(|D_r|(|B| + r |C_r^+| + r |C_r^-|) + |F_r| ) \mathrm{1}_{|A| < r(|B| + r |C_r^+| + r |C_r^-|)} \big| F(ir) = F(-ir) = 0\big].
    \end{align*}   
    By Hölder's inequality, with $p \in (1,\infty)$ to be determined, we further estimate
    \begin{align*}
    r^{-3}|b_r| &\leq \mathbb{E}\big[(|D_r|(|B| + r |C_r^+| + r |C_r^-|) + |F_r|)^{p'} \big| F(ir) = F(-ir) = 0\big]^{\frac{1}{p'}} \\
        &\qquad\cdot \mathbb{E}\big[\mathrm{1}_{|A| < r(|B| + r|C_r^+| + r C_r^-|)} \big| F(ir) =F(-ir) = 0\big]^{\frac{1}{p}}.
    \end{align*}
    We note that, for any $0 < \eta < 1$,
    \begin{align*}
        &\mathbb{E}\big[\mathrm{1}_{|A| < r(|B| + r|C_r^+| + r |C_r^-|)} \big| F(ir) =F(-ir) = 0\big] 
        \\
        &\qquad\leq \mathbb{E}\big[\mathrm{1}_{|A| < r^{1-\eta}} \big| F(ir) = F(-ir) = 0\big]
        + \mathbb{E}\big[\mathrm{1}_{|B| + r |C_r^+| + r |C_r^-| > r^{-\eta}} \big| F(ir) = F(-ir) = 0\big].
    \end{align*}
    By Proposition \ref{condjac}, conditioned on the event in question,    
    the random variable $|B| + r |C_r^+| + r |C_r^-|$ has stretched exponential tails with constants independent of $r \in (0,1)$. Hence, for constants $k,K, \gamma >0$,
    \begin{align*}
      &\mathbb{E}\big[\mathrm{1}_{|B| + r |C_r^+| + r |C_r^-| > r^{-\eta}} \,\big|\, F(ir) = F(-ir) = 0\big] \\
        &\qquad= \mathbb{P}(|B| + r |C_r^+| + r |C_r^-| > r^{-\eta} \big| F(ir) = F(-ir) = 0) \leq
        K e^{-k r^{-\eta \gamma}}
        \lesssim r^2, 
    \end{align*}
   where the implied constant depends on $\eta$ and
    \begin{equation*}
        \mathbb{E}\big[(|D_r|(|B| + r |C_r^+| + r |C_r^-|) + |F_r|)^{p'} \,\big|\, F(ir) = F(-ir) = 0\big]^{\frac{1}{p'}} \lesssim 1,
    \end{equation*}
     where the implied constant depends on $p$.

    In addition, by Lemma \ref{conddens}, for sufficiently small $r>0$,
     \begin{align*}
        &\mathbb{E}\big[\mathrm{1}_{|A| < r^{1-\eta}} \big| F(ir) = F(-ir) = 0\big] 
        \\
        &\qquad= \mathbb{E}[\mathrm{1}_{|\mathrm{Im}(F^{(0, 2)}(0) \overline{F^{(1, 0)}(0)})| < r^{1-\eta}} \big| F(ir) =F(-ir) = 0] \lesssim
        \mathbb{E}[\mathrm{1}_{|\mathrm{Im}(Z_2 \overline{Z_1})|< r^{1-\eta}}], 
    \end{align*}
    where $(Z_1,Z_2)$ is a multiple of a standard complex random vector (and the multiplying constant is absolute). A direct computation shows that
    \[
    \mathbb{P}(|\mathrm{Im}(Z_2 \overline{Z_1})|< r^{1-\eta}) \lesssim
    r^{1-\eta}(1+ |\log(r^{1-\eta})|),
    \]
    see Lemma \ref{lemappxdirect} below.

Hence, we conclude that, for sufficiently small $r > 0$
    \begin{align*}
        &|b_r| \lesssim r^3 (r^{1-\eta}(1+ |\log(r^{1-\eta})|) + r^2)^{\frac{1}{p}},
    \end{align*}
   where the implied constant depends on $p$ and $\eta$. We now fix $0<\alpha<1$, pick $\eta>0$ and $ p \in (1,\infty)$ such that $\alpha < (1-\eta)/p$ and $\alpha<2/p$, and let all subsequent constants depend on $\alpha$. With this understanding, for sufficiently small $r > 0$,
    \begin{equation*}
        |b_r| \lesssim r^3(r^\alpha + r^{2/p}) \lesssim r^{3+\alpha},
    \end{equation*}
    as desired.

\medskip\noindent {\bf Step 2}. We show that
\begin{align}\label{eq_z1}
e_r^+:=\mathbb{E}\big[\big((|B| + r |C_r^+| + r |C_r^-|)^2 +  B^2 \big) \mathrm{1}_{|A+ r B| < r^2 |C_r^+|} \,\big|\, F(ir) = F(-ir) = 0\big] \lesssim r^{1+\alpha},
\\\label{eq_z2}
e_r^-:=\mathbb{E}\big[\big((|B| + r |C_r^+| + r |C_r^-|)^2 +  B^2 \big) \mathrm{1}_{|-A+ r B| < r^2 |C_r^-|} \,\big|\, F(ir) = F(-ir) = 0\big] \lesssim r^{1+\alpha}.
\end{align}
As before, by Proposition \ref{condjac}, $|e^\pm_r| \lesssim 1$ so we can focus on small $r$. We start with \eqref{eq_z1}. Let $0<\eta<1$ and $p \in (1,\infty)$ be numbers to be specified as functions of $\alpha$, and estimate
    \begin{align}\label{eq_y1}
    \begin{aligned}
        &e^+_r\leq \mathbb{E}\big[\big((|B| + r |C_r^+| + r |C_r^-|)^2 +  B^2 \big)^{p'}  \big | F(ir) =F(-ir) = 0\big]^{\frac{1}{p'}}\\
        &\qquad\qquad\cdot \mathbb{E}\big[\mathrm{1}_{|A + rB| < r^2 |C_r^+|}\big |F(ir) = F(-ir) = 0\big]^{\frac{1}{p}},
    \end{aligned}
    \end{align}
and
    \begin{align}\label{eq_y2}
    \begin{aligned}
    &\mathbb{E}\big[\mathrm{1}_{|A + rB| < r^2 |C_r^+|}\big |F(ir) = F(-ir) = 0\big] 
    \\
    &\qquad\leq \mathbb{E}\big[\mathrm{1}_{|A + rB| < r^{2-\eta}}\big| F(ir) = F(-ir) = 0\big] + \mathbb{E}\big[\mathrm{1}_{|C_r^+| > r^{-\eta}} \big| F(ir) = F(-ir ) = 0\big].
    \end{aligned}
    \end{align}
    By the (conditional) tail estimates in Proposition \ref{condjac}, 
    \begin{equation}\label{eq_y3}
        \mathbb{E}\big[\mathrm{1}_{|C_r^+| > r^{-\eta}} \big| F(ir) = F(-ir ) = 0\big] = \mathbb{P}(|C_r^+| > r^{-\eta} \big| F(ir) = F(-ir) = 0) \lesssim r^2
    \end{equation}
    and
    \begin{equation}\label{eq_y4}
        \mathbb{E}\big[\big((|B| + r |C_r^+| + r |C_r^-|)^2 +  B^2 \big)^{p'}  \big | F(ir) =F(-ir) = 0\big]^{\frac{1}{p'}} \lesssim 1,
    \end{equation}
    where the implied constants depend on $\eta$ and $p$ but not on $r \in (0,1)$.
In addition, by Lemma \ref{conddens}, for sufficiently small $r$
\begin{align}\label{eq_y5}
\begin{aligned}
        &\mathbb{E}\big[\mathrm{1}_{|A+ r B| < r^{2-\eta}} \big| F(ir) = F(-ir) = 0\big] 
        \\
        &\qquad= \mathbb{E}\big[\mathrm{1}_{|\mathrm{Im}\big(F^{(0, 2)}(0) \cdot \overline{F^{(1, 0)}(0)}\big) + r\mathrm{Im}\big(i |F^{(0, 2)}(0)|^2 + \tfrac{1}{3} F^{(0, 3)}(0)\cdot\overline{F^{(1, 0)}(0)}\big)| < r^{2-\eta}}\big] \\
        &\qquad\lesssim \mathbb{P}\big(|\mathrm{Im}\big(Z_2 \overline{ Z_1}\big) +r \mathrm{Im}\big(i |Z_2|^2 + \tfrac{1}{3} Z_3 \overline{ Z_1}\big)| < r^{2-\eta}\big)
        \\
        &\qquad\lesssim r^{2-\eta} (1+|\log(r^{2-\eta})|),
    \end{aligned}
    \end{align}
    where $(Z_1,Z_2,Z_3)$ is a multiple of a standard complex random vector (the multiplying constant being absolute) and the last bound holds for sufficiently small $r$, as seen by a direct computation (see Lemma \ref{lemappxdirect} below).

    Combining \eqref{eq_y1}, \eqref{eq_y2}, \eqref{eq_y3}, \eqref{eq_y4} and \eqref{eq_y5} we obtain that, for sufficiently small $r$,
    \begin{align*}
        &e^+_r \lesssim (r^{2-\eta} (1+|\log(r^{2-\eta})|) + r^2)^{\frac{1}{p}}.
    \end{align*}
We now pick $\eta \in (0,1)$ and $p \in (1,\infty)$ such that $1+\alpha < \frac{2-\eta}{p}$ and $1+\alpha<2/p$, and obtain \eqref{eq_z1}. The argument for \eqref{eq_z2} is completely analogous, this time requiring the estimate
\[
\mathbb{P}\big(|\mathrm{Im}\big(Z_2 \overline{ Z_1}\big) -r \mathrm{Im}\big(i |Z_2|^2 - \tfrac{1}{3} Z_3 \overline{ Z_1}\big)| < r^{2-\eta}\big) \lesssim r^{2-\eta} (1+|\log(r^{2-\eta})|),
\]
 which we provide in Lemma \ref{lemappxdirect}.

\medskip
\noindent {\bf Step 3}. We consider the quantity
\begin{equation}\label{eq_x4}
d_r:=\mathrm{1}_{\jac F(ir) > 0} \mathrm{1}_{\jac F(-ir) > 0} - \mathrm{1}_{|A| < r B}
\end{equation}
and show that if $d_r \not= 0$, then
\begin{align}\label{eq_x2}
|A| \leq r (|B| + r |C_r^+| + r |C_r^-|)
\end{align}
and
\begin{align}\label{eq_x3}
\mbox{either } |A+ r B| \leq r^2|C_r^+|
\quad\mbox{or}\quad |-A + r B| \leq r^2 |C_r^-| 
\qquad\mbox{(possibly both).}
\end{align}
Assume that $d_r \not =0$. To see that \eqref{eq_x2} must hold, assume first that it does not. Then, $|A|>r|B|$, and, since, $d_r \not= 0$, we must have that $\mathrm{1}_{\jac F(ir) > 0} \mathrm{1}_{\jac F(-ir) > 0} = 1$. But, as seen in Step 1, this implies that \eqref{eq_x2} holds. Hence, \eqref{eq_x2}, must hold.

Second, note that, by Proposition \ref{condjac},
    \begin{align*}
        d_r &= \mathrm{1}_{r A + r^2 B + r^3 C_r^+ > 0} \mathrm{1}_{-r A + r^2 B + r^3 C_r^- > 0} - \mathrm{1}_{|A| < r B}
        \\
        &=\mathrm{1}_{A + r B + r^2 C_r^+ > 0} \mathrm{1}_{- A + r B + r^2 C_r^- > 0} -  \mathrm{1}_{A + r B > 0}\mathrm{1}_{-A + r B > 0}\neq 0.
    \end{align*}
This implies that
\begin{align*}
\mathrm{1}_{A + r B + r^2 C_r^+ > 0} \not= \mathrm{1}_{A + r B > 0}
\quad
\mbox{or}\quad \mathrm{1}_{-A + r B + r^2 C_r^- > 0} \not= \mathrm{1}_{-A + r B > 0}.
\end{align*}
In the first case, 
$r^2|C_r^+| = |(A+ r B)-(A+ r B+r^2C_r^+)|
\geq |A+ r B|$, while, in the second,
$r^2|C_r^-| = |(-A+ r B)-(-A+ r B+r^2C_r^-)|
\geq |-A+rB|$. Thus, \eqref{eq_x3} also holds.
    
\medskip\noindent {\bf Step 4}.
    We consider now
    \begin{align*}
        c_r&:=\mathbb{E}\big[(-A^2 + r^2 B^2)\mathrm{1}_{\jac F(ir) > 0} \mathrm{1}_{\jac F(-ir) > 0} \,\big |\, F(ir) = F(-ir) = 0 \big] \\
        &\qquad -\mathbb{E}\big[(-A^2 + r^2 B^2)\mathrm{1}_{|A| < r B}\,\big |\, F(ir) = F(-ir) = 0 \big] \\
        &= \mathbb{E}\big[(-A^2 + r^2 B^2)\cdot d_r \,\big|\, F(ir) = F(-ir) = 0\big],
    \end{align*}
    where $d_r$ is defined by \eqref{eq_x4}.
As a consequence of Step 3, 
\begin{align*}
|(-A^2 + r^2 B^2)\cdot d_r| \leq
r^2 \big((|B| + r |C_r^+| + r |C_r^-|)^2 + B^2 \big) \cdot \big( \mathrm{1}_{|A+ r B| < r^2 |C_r^+|} + \mathrm{1}_{|-A+ r B| < r^2 |C_r^-|} \big).
\end{align*}
Hence, we can use the estimates \eqref{eq_z1}, \eqref{eq_z2} to bound
    \begin{align*}
        |c_r| \leq r^2 (e_r^+ + e_r^-)
        \lesssim r^{3+\alpha}.
    \end{align*}
Combining this with \eqref{eq_z6} we conclude that
\begin{align}
\frac{\sigma^{c, +}(r)}{r^2} = \mathbb{E}\big[(-A^2 + r^2 B^2)\mathrm{1}_{|A| < r B} \,\big |\, F(ir) = F(-ir) = 0\big] +b_r + c_r,
\end{align}
with $|b_r + c_r| \lesssim r^{3+\alpha}$, as desired.
\end{proof}
The following lemma will help us analyze the approximate expression for $\sigma^{c,+}$ derived in Lemma \ref{approxp}.\footnote{Below and throughout $\mathrm{Im}(z)^2$ should be read as $\big(\mathrm{Im}(z)\big)^2$.}
\begin{lemma} \label{boundsp}
    For $z_1, z_2, z_3 \in \mathbb{C}$ and $r > 0$ let 
    \begin{equation*}
        \varphi_r(z_1, z_2, z_3) = \big(-\mathrm{Im}(z_2 \overline{z_1})^2 + r^2 \mathrm{Im}(i|z_2|^2 + \tfrac{1}{3} z_3\overline{z_1})^2\big)\mathrm{1}_{|\mathrm{Im}(z_2\overline{z_1})| < r\mathrm{Im}(i|z_2|^2 + \frac{1}{3} z_3\overline{z_1})}.
    \end{equation*}
    Then there exist constants $d,D>0$ such that if $(Z_1, Z_2, Z_3) \sim \mathcal{N}_{\mathbb{C}}(I)$ then
    \begin{equation*}
        d r^3 < \mathbb{E}[\varphi_r(Z_1, Z_2, Z_3)] < D r^3, \qquad 0<r<1. 
    \end{equation*}
\end{lemma}
\begin{proof}
    \noindent {\bf Step 1}. We first prove the lower bound. 
    For $z_1, z_2 \in \mathbb{C}$ we write $z_1 \overline{z_2} = |z_1| |z_2| e^{i \theta_{12}}$, where
    $\theta_{12} = \theta_{12}(z_1,z_2) \in (-\pi,\pi]$. If $z_1=0$ or $z_2=0$ we set  $\theta_{12} = 0$. 
    
    We consider the region
    \begin{align*}
        \Omega_r = \{(z_1,z_2,z_3) \in \mathbb{C}^3\,:\, |\sin(\theta_{12})| < r\} \cap (K_1 \times \mathbb{D} \times K_3),
    \end{align*}
    where $K_1, K_3$ are the rectangles 
    \begin{align*}
        &K_1 = \{z_1\in\mathbb{C}:\tfrac{1}{2} < \text{Re}(z_1) < \tfrac{1}{\sqrt{2}}, -\tfrac{1}{\sqrt{2}} < \mathrm{Im}(z_1) < -\tfrac{1}{2}\}, \\
        &K_3 =\{z_3\in\mathbb{C}:6 < \text{Re}(z_3) < 7, 6 < \mathrm{Im}(z_3) < 7\}.
    \end{align*}
    Then for $(z_1, z_2, z_3) \in \Omega_r$ we have
    \begin{equation*}
        |\mathrm{Im}(z_2\overline{z_1})| = |z_1| |z_2| |\sin(\theta_{12})| < r 
    \end{equation*}
    and
    \begin{equation*}
        \mathrm{Im}(i|z_2|^2 + \tfrac{1}{3} z_3\overline{z_1}) > \tfrac{1}{3} (6 \tfrac{1}{2} + 6 \tfrac{1}{2}) = 2.
    \end{equation*}
    Hence, for $(z_1, z_2, z_3) \in \Omega_r$,
    \begin{equation*}
        \varphi_r(z_1, z_2, z_3) > 3r^2.
    \end{equation*}
    Since $\Omega_r \subset K_1 \times \mathbb{D} \times K_3$ for all $r>0$, the probability density of $(Z_1, Z_2, Z_3)$ is bounded from below on $\Omega_r$ by a constant $c > 0$ independent of $r$. Thus, for $0 < r <1$,
    \begin{align*}
        &\mathbb{E}[\varphi_r(Z_1, Z_2, Z_3)] \geq \mathbb{E}[\varphi_r(Z_1, Z_2, Z_3)\mathrm{1}_{(Z_1, Z_2, Z_3) \in \Omega_r}] > 3r^2 \mathbb{E}[\mathrm{1}_{(Z_1, Z_2, Z_3) \in \Omega_r}] \\
        &\qquad> 3cr^2 \int_{\Omega_r} dA(z_1) dA(z_2) dA(z_3) = 3cr^2 \int_{K_1 \times \mathbb{D} \times K_3} \mathrm{1}_{|\sin(\theta_{12})| < r} dA(z_1) dA(z_2) dA(z_3) \\
        &\qquad\geq 3 c r^2 |K_3| \int_{K_1 \times \mathbb{D}} \mathrm{1}_{|\theta_{12}|< r} dA(z_1) dA(z_2) = 3 c r^2 |K_3| |K_1| \frac{2r}{2\pi} \pi  = 3c |K_1| |K_3| r^3,
    \end{align*}
    as, for fixed $z_1$, $\theta_{12}$ and the argument of $z_2$ differ only by a constant (modulo $2\pi$), which proves the lower bound.

\smallskip

    \noindent {\bf Step 2}. We consider now the  upper bound and note that
    \begin{align*}
        \varphi_r(z_1, z_2, z_3) &\leq 2r^2 \mathrm{Im}(i|z_2|^2 + \frac{1}{3} z_3\overline{z_1})^2 \mathrm{1}_{|\mathrm{Im}(z_2\overline{z_1})| < r\mathrm{Im}(i|z_2|^2 + \frac{1}{3} z_3\overline{z_1})} \\
        &\leq 4 r^2 (|z_1|^4 + |z_2|^4 +  |z_3|^4)\mathrm{1}_{|\mathrm{Im}(z_2\overline{z_1})| < r\mathrm{Im}(i|z_2|^2 + \frac{1}{3} z_3\overline{z_1})} \\
        &\leq 4r^2 (|z_1|^4 + |z_2|^4 +  |z_3|^4) \mathrm{1}_{|\mathrm{Im}(z_2\overline{z_1})| < 2r |z_2|^2} \\
        &\qquad+  4r^2(|z_1|^4 + |z_2|^4 +  |z_3|^4)\mathrm{1}_{|\mathrm{Im}(z_2\overline{z_1})| < 2r |z_1| |z_3|}.
    \end{align*}
    We also note that, for $c \geq 0$,
    \begin{equation*}
        \int_0^{2\pi} \mathrm{1}_{|\sin(\theta)| < c}  d\theta = \begin{cases} 4 \arcsin(c) \text{ if } c \leq 1 \\
        2\pi \text { if } c > 1\end{cases} \leq 8c  
    \end{equation*}
    and that for fixed $z_2$, $\theta_{12}$ and the argument of $z_1$ differ only by a constant (modulo $2\pi$). We estimate
    \begin{align*}
        &\mathbb{E}\big[4r^2 (|Z_1|^4 + |Z_2|^4 +  |Z_3|^4) \mathrm{1}_{|\mathrm{Im}(Z_2 \overline{Z_1})| < 2r |z_2|^2}\big] \\
        &\quad= \int_{\mathbb{C}^3} 4r^2 (|z_1|^4 + |z_2|^4 +  |z_3|^4)\mathrm{1}_{|\sin(\theta_{12})| |z_1| |z_2| < 2r |z_2|^2} \frac{1}{\pi^3} e^{-|z_1|^2 -|z_2|^2 - |z_3|^2} dA(z_1) dA(z_2) dA(z_3) \\
        &\quad= \int_{\mathbb{C}^2} \int_0^{\infty} \int_0^{2\pi} 4r^2(t_1^4 + |z_2|^4 + |z_3|^4) \mathrm{1}_{|\sin(\theta_{1})| < 2r \frac{|z_2|}{t_1}} \frac{1}{\pi^3}e^{-t_1^2 - |z_2|^2 - |z_3|^2} t_1 d\theta_1 dt_1 dA(z_2) dA(z_3) \\
        &\quad\leq r^3 \int_{\mathbb{C}^2} \int_0^{\infty} \frac{64}{\pi^3} (t_1^4 + |z_2|^4 + |z_3|^4) |z_2| e^{-t_1^2 - |z_2|^2 - |z_3|^2} dt_1 dA(z_2) dA(z_3).
    \end{align*}
    Similarly,
    \begin{align*}
        &\mathbb{E}\big[4r^2(|Z_1|^4 + |Z_2|^4 +  |Z_3|^4)\mathrm{1}_{|\mathrm{Im}(Z_2\overline{Z_1})| < 2r |Z_1| |Z_3|}\big] \\
        &\quad\leq r^3 \int_{\mathbb{C}^2} \int_0^{\infty} \frac{64}{\pi^3} (|z_1|^4 + t_2^4 + |z_3|^4) |z_3| e^{-|z_1|^2 - t_2^2 - |z_3|^2} dt_2 dA(z_1) dA(z_3).
    \end{align*}
    The upper bound now follows as
    \begin{align*}
        \mathbb{E}[\varphi_r(Z_1, Z_2, Z_3)] 
        &\leq \mathbb{E}\big[4r^2 (|Z_1|^4 + |Z_2|^4 +  |Z_3|^4) \mathrm{1}_{|\mathrm{Im}(Z_2 \overline{Z_1})| < 2r |Z_2|^2}\big]\\
        &\qquad+ \mathbb{E}\big[4r^2(|Z_1|^4 + |Z_2|^4 +  |Z_3|^4)\mathrm{1}_{|\mathrm{Im}(Z_2\overline{Z_1})| < 2r |Z_1| |Z_3|}\big] \lesssim r^3.
    \end{align*}
\end{proof}
Next, we combine all the previous asymptotics and describe the local correlations of critical points of GEF with positive indices.
\begin{prop} \label{repp}
    There exist constants $c, C > 0$ such that
    \begin{equation}\label{eq_ax}
        c \rho^7 \leq \mathbb{E}[\Ncp \cdot (\Ncp - 1)] \leq C \rho^7, \qquad 0<\rho<1.
    \end{equation}
\end{prop}
\begin{proof}
We prove \eqref{eq_ax} for sufficiently small $\rho$; the full claim then follows from the fact that $\mathbb{E}[\Ncp \cdot (\Ncp - 1)]$ is a increasing function of $\rho$, together with the bound $\mathbb{E}[\Ncp \cdot (\Ncp - 1)] \lesssim \rho^4$, given by Corollary \ref{coro_rho4}. Proposition \ref{approxKacRice} shows that
    \begin{align}\label{eq_ax3}
        \rho^2 \int_0^{\frac{\rho}{4}} \sigma^{c, +}(r) r^{-1} dr \lesssim \mathbb{E}[\Ncp \cdot (\Ncp - 1)] \lesssim \rho^2 \int_0^{\rho} \sigma^{c, +}(r) r^{-1} dr,
    \end{align} 
     where 
    \begin{align}
        \sigma^{c,+}(r) := \mathbb{E}\big[|\jac F(ir)\cdot\jac F(-ir)| \mathrm{1}_{\jac F(z) > 0} \mathrm{1}_{\jac F(w) > 0} \,\big|\, F(ir) = F(-ir)= 0\big],
    \end{align}
    which, by Lemma \ref{approxp}, satisfies the following expansion:
    \begin{align*}
        \sigma^{c, +}(r)  = r^2 \cdot \mathbb{E}\big[(-A^2 + r^2 B^2)\mathrm{1}_{|A| < r B} \,\big |\, F(ir) = F(-ir) = 0\big] + O(r^{5+1/2}), \qquad 0<r<1.
    \end{align*}
    We recall the definition of $A$ and $B$ and write
    \[
    (-A^2 + r^2 B^2) \mathrm{1}_{|A| < r B}=\varphi_r(F^{(1, 0)}(0), F^{(0, 2)}(0), F^{(0, 3)}(0)) 
    \]
    where 
      \begin{equation*}
        \varphi_r(z_1, z_2, z_3) = \big(-\mathrm{Im}(z_2 \overline{z_1})^2 + r^2 \mathrm{Im}(i|z_2|^2 + \tfrac{1}{3} z_3\overline{z_1})^2\big)\mathrm{1}_{|\mathrm{Im}(z_2\overline{z_1})| < r\mathrm{Im}(i|z_2|^2 + \tfrac{1}{3} z_3\overline{z_1})}.
    \end{equation*}
    We note that $\varphi_r : \mathbb{C}^3 \rightarrow [0, +\infty)$ is homogeneous of degree $4$ and use the approximate description of the conditional vector  
   $(F^{(1, 0)}(0), F^{(0, 2)}(0), F^{(0, 3)}(0)) \,|\, (F(ir) = F(-ir) = 0)$ given in Lemma \ref{conddens} to obtain
   \begin{align}\label{eq_ax2}
   \sigma^{c, +}(r) \asymp r^2 \cdot \mathbb{E}[\varphi_r(Z_1,  Z_2,  Z_3)] + O(r^{5+1/2}), \qquad 0<r<\delta,
   \end{align}
   where $\delta>0$ is an adequate constant and
   $(Z_1, Z_2, Z_3)$ is a standard complex normal vector.
    
By Lemma \ref{boundsp}, $\mathbb{E}[\varphi_r(Z_1,  Z_2,  Z_3)] \asymp r^3$, which, combined with \eqref{eq_ax2}, gives
    \begin{equation*}
        \sigma^{c, +}(r) \asymp r^5, \qquad 0<r<\delta.
    \end{equation*} 
    Inserting this into \eqref{eq_ax3}, shows that both sides of the estimate are $\rho^2 \int_0^{\frac{\rho}{4}} r^4 dr  \asymp \rho^2 \int_0^{\rho}r^4 dr  \asymp\rho^7$, for sufficiently small $\rho$, which finishes the proof of \eqref{eq_ax}.
\end{proof}

\section{Critical points with negative index}\label{sec_cns}
We look into the correlation among critical points of GEFs with negative index. While the analysis is similar to that of Section \ref{sec_cps}, some differences are substantial and we provide those details. We start by studying the approximate intensity given by Proposition \ref{approxKacRice}.
\begin{lemma} \label{approxm}
    Let $\sigma^{c, -}(r)$ be the approximate intensity \eqref{eq_crm} and $0 <\alpha < 1$. Then
    \begin{equation}\label{eq_mxx}
        \frac{\sigma^{c, -}(r)}{r^2}= \mathbb{E}\big[(-A^2 + r^2 B^2)\mathrm{1}_{|A| < -r B} \,\big |\, F(ir) = F(-ir) = 0\big] + O(r^{3+\alpha}), \qquad 0< r < 1,
    \end{equation}
\end{lemma}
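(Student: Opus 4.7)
The plan is to mirror the four-step scheme of the proof of Lemma \ref{approxp}, replacing the conditions $\{\jac F(\pm ir) > 0\}$ by $\{\jac F(\pm ir) < 0\}$ and the reference indicator $\mathrm{1}_{|A| < rB}$ by $\mathrm{1}_{|A| < -rB}$ (noting that the latter is supported on $\{B < 0\}$ and can be written as $\mathrm{1}_{A+rB < 0}\mathrm{1}_{-A+rB < 0}$). Using the expansion $\jac F(ir)\cdot\jac F(-ir) = r^2(-A^2 + r^2B^2 + r^2 A D_r + r^3 F_r)$ from Proposition \ref{condjac}, I would decompose
\begin{equation*}
\frac{\sigma^{c,-}(r)}{r^2} = \mathbb{E}\big[(-A^2 + r^2 B^2)\mathrm{1}_{\jac F(ir) < 0}\mathrm{1}_{\jac F(-ir) < 0}\,\big|\,F(ir)=F(-ir)=0\big] + \tilde b_r,
\end{equation*}
where $\tilde b_r$ gathers the contribution of the remainder $r^2 A D_r + r^3 F_r$. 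The goal is then to show (i) $|\tilde b_r|\lesssim r^{3+\alpha}$ and (ii) the product of negativity indicators can be replaced by $\mathrm{1}_{|A| < -rB}$ at an additional cost of $O(r^{3+\alpha})$.

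For (i), observe that the two conditions $\jac F(\pm ir) < 0$ together force $A < -rB + r^2|C_r^+|$ and $-A < -rB + r^2|C_r^-|$, so that $|A| < r|B| + r^2(|C_r^+|+|C_r^-|)$. This is exactly the smallness condition on $A$ used in Step~1 of the proof of Lemma \ref{approxp}, and the same Hölder splitting combined with the conditional stretched-exponential tails from Proposition \ref{condjac} and the anti-concentration bound $\mathbb{P}\bigl(|\mathrm{Im}(Z_2\overline{Z_1})| < r^{1-\eta}\bigr)\lesssim r^{1-\eta}(1+|\log r|)$ from Lemma \ref{lemappxdirect} delivers $|\tilde b_r|\lesssim r^{3+\alpha}$. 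For (ii) I would introduce the auxiliary quantities
\begin{equation*}
\tilde e_r^{\pm} := \mathbb{E}\big[\big((|B|+r|C_r^+|+r|C_r^-|)^2 + B^2\big)\mathrm{1}_{|A\pm rB| < r^2|C_r^{\pm}|}\,\big|\,F(ir)=F(-ir)=0\big]
\end{equation*}
and prove $\tilde e_r^{\pm}\lesssim r^{1+\alpha}$; Hölder together with the conditional tail bounds again reduces the matter to anti-concentration of $\mathrm{Im}(Z_2\overline{Z_1}) \pm r\,\mathrm{Im}\bigl(i|Z_2|^2 + \tfrac{1}{3}Z_3\overline{Z_1}\bigr)$ at scale $r^{2-\eta}$, which is precisely what Lemma \ref{lemappxdirect} furnishes for both sign choices.

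To close the argument, set $\tilde d_r := \mathrm{1}_{\jac F(ir) < 0}\mathrm{1}_{\jac F(-ir) < 0} - \mathrm{1}_{A+rB < 0}\mathrm{1}_{-A+rB < 0}$. A sign-flip argument identical to Step~3 of Lemma \ref{approxp} shows that if $\tilde d_r\neq 0$ then either $|A+rB|\leq r^2|C_r^+|$ or $|-A+rB|\leq r^2|C_r^-|$, and in either case the smallness bound $|A| < r|B| + r^2(|C_r^+|+|C_r^-|)$ derived in (i) still applies (in the non-reference branch it follows from the negativity of both Jacobians as in (i), and in the reference branch from $\mathrm{1}_{|A| < -rB}=1$). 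Hence the contribution of $\tilde d_r$ to the expectation is controlled by $r^2(\tilde e_r^+ + \tilde e_r^-) = O(r^{3+\alpha})$, and merging with (i) yields \eqref{eq_mxx}. The main obstacle relative to Lemma \ref{approxp} is bookkeeping of signs: one must confirm that the asymmetric linear perturbation $\mathrm{Im}(Z_2\overline{Z_1}) - r\,\mathrm{Im}(i|Z_2|^2 - \tfrac{1}{3}Z_3\overline{Z_1})$ appearing in the anti-concentration step for $\tilde e_r^-$ admits the same $r^{2-\eta}(1+|\log r|)$ estimate as its counterpart in the positive case, which, as indicated in the proof of Lemma \ref{approxp}, is indeed covered by Lemma \ref{lemappxdirect}.
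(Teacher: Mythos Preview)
Your proposal is correct and follows essentially the same approach as the paper: the same decomposition into a remainder term $\tilde b_r$ and an indicator-replacement error governed by the same auxiliary quantities $\tilde e_r^\pm$, with the sign-flip argument for $\tilde d_r$ and the invocation of Lemma~\ref{lemappxdirect} exactly as in the paper's Steps~1--4.
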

where the implied constant depends on $\alpha$.
\begin{proof}
With the notation of Proposition, \ref{condjac}, we have
    \begin{align*}
        \frac{\sigma^{c, -}(r)}{r^2} = \mathbb{E}\big[(-A^2 + r^2 B^2) \mathrm{1}_{\jac F(ir) < 0} \mathrm{1}_{\jac F(-ir) < 0} \,\big |\, F(ir) = F(-ir) = 0 \big] + b_r',
    \end{align*}
    where
    \begin{align*}
        b_r' = \mathbb{E}\big[(r^2 A D_r + r^3 F_r) \mathrm{1}_{\jac F(ir) < 0} \mathrm{1}_{\jac F(-ir) < 0} \,\big |\, F(ir) = F(-ir) = 0 \big].
    \end{align*}

    \medskip \noindent {\bf Step 1}. We first show that 
    \begin{align}\label{eq_z6m}
    |b_r'| \lesssim r^{3+\alpha}.
    \end{align}
    Proposition \ref{condjac} implies that $|b_r'| \lesssim 1$ for $0<r<1$, so we can focus on the small $r$ range. We fix
    $0<\alpha<1$ and let constants depend on it. Inspecting the expansion of $\jac F(\pm ir)$ given by Proposition \ref{condjac}, we see that $\jac F(ir) < 0$ and $\jac F(-ir) < 0$ 
    occur simultaneously if and only if
    \begin{align*}
    -A> rB+r^2C_r^+ \mbox{ and }
    A> rB+r^2C_r^-.
    \end{align*}
    In this case
    $r B + r^2 C_r^- < A < -r B - r^2 C_r^+$, which gives
    \begin{equation*}
        |A| < r |B| + r^2 |C_r^+| + r^2 |C_r^-|.
    \end{equation*}
    Thus,
    \begin{align*}
        |b_r'| \leq r^3 \mathbb{E}\big[(|D_r|(|B| + r |C_r^+| + r |C_r^-|) + |F_r| ) \mathrm{1}_{|A| < r(|B| + r |C_r^+| + r |C_r^-|)} \big| F(ir) = F(-ir) = 0\big]
    \end{align*}   
    and, by Step 1 of the proof of Lemma \ref{approxp},  $|b_r'| \lesssim r^{3+\alpha}$. 

\medskip\noindent {\bf Step 2}. We note that by Step 2 of the proof of Lemma \ref{approxp}:
\begin{align}\label{eq_z1m}
e_r^+:=\mathbb{E}\big[\big((|B| + r |C_r^+| + r |C_r^-|)^2 +  B^2 \big) \mathrm{1}_{|A+ r B| < r^2 |C_r^+|} \,\big|\, F(ir) = F(-ir) = 0\big] \lesssim r^{1+\alpha},
\\\label{eq_z2m}
e_r^-:=\mathbb{E}\big[\big((|B| + r |C_r^+| + r |C_r^-|)^2 +  B^2 \big) \mathrm{1}_{|-A+ r B| < r^2 |C_r^-|} \,\big|\, F(ir) = F(-ir) = 0\big] \lesssim r^{1+\alpha}.
\end{align}

\medskip
\noindent {\bf Step 3}. We consider the quantity
\begin{equation}\label{eq_x4m}
d_r':=\mathrm{1}_{\jac F(ir) < 0} \mathrm{1}_{\jac F(-ir) < 0} - \mathrm{1}_{|A| < -r B}
\end{equation}
and show that if $d_r' \not= 0$, then
\begin{align}\label{eq_x2m}
|A| \leq r (|B| + r |C_r^+| + r |C_r^-|)
\end{align}
and
\begin{align}\label{eq_x3m}
\mbox{either } |A+ r B| \leq r^2|C_r^+|
\quad\mbox{or}\quad |-A + r B| \leq r^2 |C_r^-| 
\qquad\mbox{(possibly both).}
\end{align}
Assume that $d_r' \not =0$. To see that \eqref{eq_x2m} must hold, assume first that it does not. Then, $|A|>r|B| \geq -rB$, and, since, $d_r' \not= 0$, we must have that $\mathrm{1}_{\jac F(ir) < 0} \mathrm{1}_{\jac F(-ir) < 0} = 1$. But, as seen in Step 1, this implies that \eqref{eq_x2m} holds. Hence, \eqref{eq_x2m}, must hold.

Second, note that, by Proposition \ref{condjac},
    \begin{align*}
        d_r' &= \mathrm{1}_{r A + r^2 B + r^3 C_r^+ < 0} \mathrm{1}_{-r A + r^2 B + r^3 C_r^- < 0} - \mathrm{1}_{|A| < -r B}
        \\
        &=\mathrm{1}_{A + r B + r^2 C_r^+ < 0} \mathrm{1}_{- A + r B + r^2 C_r^- < 0} -  \mathrm{1}_{A + r B < 0}\mathrm{1}_{-A + r B < 0}\neq 0.
    \end{align*}
This implies that
\begin{align*}
\mathrm{1}_{A + r B + r^2 C_r^+ < 0} \not= \mathrm{1}_{A + r B < 0}
\quad
\mbox{or}\quad \mathrm{1}_{-A + r B + r^2 C_r^- < 0} \not= \mathrm{1}_{-A + r B < 0}.
\end{align*}
In the first case, 
$r^2|C_r^+| = |(A+ r B)-(A+ r B+r^2C_r^+)|
\geq |A+ r B|$, while, in the second,
$r^2|C_r^-| = |(-A+ r B)-(-A+ r B+r^2C_r^-)|
\geq |-A+rB|$. Thus, \eqref{eq_x3m} also holds.
    
\medskip\noindent {\bf Step 4}.
    We consider now
    \begin{align*}
        c_r' &:=\mathbb{E}\big[(-A^2 + r^2 B^2)\mathrm{1}_{\jac F(ir) < 0} \mathrm{1}_{\jac F(-ir) < 0} \,\big |\, F(ir) = F(-ir) = 0 \big] \\
        &\qquad -\mathbb{E}\big[(-A^2 + r^2 B^2)\mathrm{1}_{|A| < -r B}\,\big |\, F(ir) = F(-ir) = 0 \big] \\
        &= \mathbb{E}\big[(-A^2 + r^2 B^2)\cdot d_r' \,\big|\, F(ir) = F(-ir) = 0\big],
    \end{align*}
    where $d_r'$ is defined by \eqref{eq_x4m}.
As a consequence of Step 3,
\begin{align*}
\left|(-A^2 + r^2 B^2)\cdot d_r'\right| \leq
r^2 \big((|B| + r |C_r^+| + r |C_r^-|)^2 + B^2 \big) \cdot \big( \mathrm{1}_{|A+ r B| < r^2 |C_r^+|} + \mathrm{1}_{|-A+ r B| < r^2 |C_r^-|} \big).
\end{align*}
Hence, we can use the estimates \eqref{eq_z1m}, \eqref{eq_z2m} to bound
    \begin{align*}
        |c_r'| \leq r^2 (e_r^+ + e_r^-)
        \lesssim r^{3+\alpha}.
    \end{align*}
Combining this with \eqref{eq_z6m} we conclude that
\begin{align}
 \frac{\sigma^{c, -}(r)}{r^2} = \mathbb{E}\big[(-A^2 + r^2 B^2)\mathrm{1}_{|A| < -r B} \,\big |\, F(ir) = F(-ir) = 0\big] +b_r' + c_r',
\end{align}
with $|b_r' + c_r'| \lesssim r^{3+\alpha}$, as desired.
\end{proof}
The following analogue of Lemma \ref{boundsp} helps to analyze the expression in \eqref{eq_mxx}.

\begin{lemma} \label{boundsm}
    For $z_1, z_2, z_3 \in \mathbb{C}$ and $r > 0$ let 
    \begin{equation}\label{eq_phiprime}
        \varphi_r'(z_1, z_2, z_3) = \big(-\mathrm{Im}(z_2 \overline{z_1})^2 + r^2 \mathrm{Im}(i|z_2|^2 + \tfrac{1}{3} z_3\overline{z_1})^2\big)\mathrm{1}_{|\mathrm{Im}(z_2\overline{z_1})| < -r\mathrm{Im}(i|z_2|^2 + \frac{1}{3} z_3\overline{z_1})}.
    \end{equation}
    Then there exist constants $d,D>0$ such that if $(Z_1, Z_2, Z_3) \sim \mathcal{N}_{\mathbb{C}}(I)$ then
    \begin{equation*}
        d r^3 < \mathbb{E}[\varphi_r'(Z_1, Z_2, Z_3)] < D r^3, \qquad 0<r<1. 
    \end{equation*} 
\end{lemma}
\begin{proof}
    \noindent {\bf Step 1}. We first prove the lower bound. As in the proof of Lemma \ref{boundsp}, for $z_1, z_2 \in \mathbb{C}$ we write $z_1 \overline{z_2} = |z_1| |z_2| e^{i \theta_{12}}$, with
    $\theta_{12} = \theta_{12}(z_1,z_2) \in (-\pi,\pi]$. We consider the region
    \begin{align*}
        \Omega_r' = \{(z_1,z_2,z_3) \in \mathbb{C}^3\,:\, |\sin(\theta_{12})| < r\} \cap (K_1 \times \mathbb{D} \times K_3'),
    \end{align*}
    where $K_1, K_3'$ are the rectangles 
    \begin{align*}
        &K_1 = \{z_1\in\mathbb{C}:\tfrac{1}{2} < \text{Re}(z_1) < \tfrac{1}{\sqrt{2}}, -\tfrac{1}{\sqrt{2}} < \mathrm{Im}(z_1) < -\tfrac{1}{2}\}, \\
        &K_3' =\{z_3\in\mathbb{C}: -13 < \text{Re}(z_3) < -12, -13 < \mathrm{Im}(z_3) < -12\}.
    \end{align*}
    Then for $(z_1, z_2, z_3) \in \Omega_r$ we have
    \begin{equation*}
        |\mathrm{Im}(z_2\overline{z_1})| = |z_1| |z_2| |\sin(\theta_{12})| < r 
    \end{equation*}
    and
    \begin{equation*}
        -\mathrm{Im}(i|z_2|^2 + \tfrac{1}{3} z_3\overline{z_1}) > - 1 + \tfrac{1}{3} (12 \tfrac{1}{2} + 12 \tfrac{1}{2}) = 3.
    \end{equation*}
    Hence, for $(z_1, z_2, z_3) \in \Omega_r$,
    \begin{equation*}
        \varphi'_r(z_1, z_2, z_3) > 8r^2.
    \end{equation*}
    The probability density of $(Z_1, Z_2, Z_3)$ is bounded from below on $\Omega_r'$ by a constant $c > 0$ independent of $r \in (0,1)$ and, thus,
    \begin{align*}
        &\mathbb{E}[\varphi'_r(Z_1, Z_2, Z_3)] \geq \mathbb{E}[\varphi'_r(Z_1, Z_2, Z_3)\mathrm{1}_{(Z_1, Z_2, Z_3) \in \Omega_r}] > 8r^2 \mathbb{E}[\mathrm{1}_{(Z_1, Z_2, Z_3) \in \Omega_r'}] \\
        &\qquad> 8cr^2 \int_{\Omega_r'} dA(z_1) dA(z_2) dA(z_3) = 8cr^2 \int_{K_1 \times \mathbb{D} \times K_3'} \mathrm{1}_{|\sin(\theta_{12})| < r} dA(z_1) dA(z_2) dA(z_3) \\
        &\qquad\geq 8 c r^2 |K_3'| \int_{K_1 \times \mathbb{D}} \mathrm{1}_{|\theta_{12}|< r} dA(z_1) dA(z_2) = 8 c r^2 |K_3'| |K_1| \frac{2r}{2\pi} \pi  = 8c |K_1| |K_3'| r^3,
    \end{align*}
    as, for fixed $z_1$, $\theta_{12}$ and the argument of $z_2$ differ only by a constant (modulo $2\pi$), which proves the lower bound.

\smallskip

    \noindent {\bf Step 2}. We consider now the  upper bound and note that
    \begin{align*}
        \varphi_r'(z_1, z_2, z_3) &\leq 2r^2 \mathrm{Im}(i|z_2|^2 + \tfrac{1}{3} z_3\overline{z_1})^2 \mathrm{1}_{|\mathrm{Im}(z_2\overline{z_1})| < -r\mathrm{Im}(i|z_2|^2 + \frac{1}{3} z_3\overline{z_1})} \\
        &\leq 4 r^2 (|z_1|^4 + |z_2|^4 +  |z_3|^4)\mathrm{1}_{|\mathrm{Im}(z_2\overline{z_1})| < -r\mathrm{Im}(i|z_2|^2 + \frac{1}{3} z_3\overline{z_1})} \\
        &\leq 4r^2(|z_1|^4 + |z_2|^4 +  |z_3|^4)\mathrm{1}_{|\mathrm{Im}(z_2\overline{z_1})| < 2r |z_1| |z_3|},
    \end{align*}
    and the upper bound follows by the proof of Step 2 of Lemma \ref{boundsp}.
\end{proof}
Collecting all the previous results, we can now describe the local correlations between critical points of GEF with negative indices.
\begin{prop} \label{repm}
    There exist constants $c, C > 0$ such that
    \begin{equation}\label{eq_axm}
        c \rho^7 \leq \mathbb{E}[\Ncm\cdot(\Ncm - 1)] \leq C \rho^7, \qquad
        0<\rho<1.
    \end{equation}
\end{prop}
\begin{proof}
As before, the monotonicity of $\mathbb{E}[\Ncm\cdot(\Ncm - 1)]$ on $\rho$ and the bound $\mathbb{E}[\Ncm \cdot (\Ncm - 1)] \lesssim \rho^4$, given by 
Corollary \ref{coro_rho4} allows us to reduce the analysis to small $\rho$.
By Proposition \ref{approxKacRice},
    \begin{align}\label{eq_ax3m}
        \rho^2 \int_0^{\frac{\rho}{4}} \sigma^{c, -}(r) r^{-1} dr \lesssim \mathbb{E}[\Ncm \cdot (\Ncm - 1)] \lesssim \rho^2 \int_0^{\rho} \sigma^{c, -}(r) r^{-1} dr,
    \end{align} 
     where 
    \begin{align}
        \sigma^{c,-}(r) := \mathbb{E}\big[|\jac F(ir)\cdot\jac F(-ir)| \mathrm{1}_{\jac F(z) < 0} \mathrm{1}_{\jac F(w) < 0} \,\big|\, F(ir) = F(-ir)= 0\big],
    \end{align}
    which, by Lemma \ref{approxm}, satisfies the following expansion:
    \begin{equation*}
        \sigma^{c, -}(r)  = r^2 \mathbb{E}\big[(-A^2 + r^2 B^2)\mathrm{1}_{|A| < -r B} \,\big |\, F(ir) = F(-ir) = 0\big] + O(r^{5+1/2}), \qquad 0<r<1.
    \end{equation*}
    We recall the definition of $A$ and $B$ and write
    \begin{align}
        (-A^2 + r^2 B^2) \mathrm{1}_{|A| < - rB} = \varphi_r'(F^{(1, 0)}(0), F^{(0, 2)}(0), F^{(0 ,3)}(0))
    \end{align}
     where $\varphi'_r: \mathbb{C}^3 \rightarrow [0, +\infty)$ is given by \eqref{eq_phiprime}. We note that $\varphi_r'$ is homogeneous of degree $4$ and use the approximate description of the conditional vector  
   $(F^{(1, 0)}(0), F^{(0, 2)}(0), F^{(0, 3)}(0)) \,|\, (F(ir) = F(-ir) = 0)$ given in Lemma \ref{conddens} to obtain
   \begin{align}\label{eq_ax2m}
   \sigma^{c, -}(r) \asymp r^2 \mathbb{E}[\varphi'_r(Z_1,  Z_2,  Z_3)] + O(r^{5+1/2}), \qquad 0<r<\delta,
   \end{align}
   where $\delta>0$ is an adequate constant and $(Z_1, Z_2, Z_3)$ is a standard complex normal vector.
    By Lemma \ref{boundsm} $\mathbb{E}[\varphi'_r(Z_1,  Z_2,  Z_3)] \asymp r^3$ for $0 < r< 1$, which, combined with \eqref{eq_ax2m}, gives
    \begin{equation*}
        \sigma^{c, -}(r) \asymp r^5, \qquad 0<r<\delta.
    \end{equation*} 
    Inserting this into \eqref{eq_ax3m}, shows that both sides of the estimate are $\rho^2 \int_0^{\frac{\rho}{4}} r^4 dr  \asymp \rho^2 \int_0^{\rho}r^4 dr  \asymp\rho^7$, provided that $\rho$ is sufficiently small, and completes the proof of \eqref{eq_axm}.
\end{proof}

\section{Zeros and positively signed critical points}\label{sec_zp1}

We now look into correlations between
zeros and critical points of $G$. We shall start by considering critical points with positive index and invoke the Kac-Rice formula:
\begin{align}\label{eq_kra}
\mathbb{E}[\Nz \Ncp] = \int_{B_\rho^2}  \frac{\mathbb{E}\big[|\jac G(z) \, \jac F(w)| \, \mathrm{1}_{\jac F(w) > 0} \,\big|\, G(z) = F(w) = 0\big]}{\pi^2 e^{|z|^2 + |w|^2}\big(1- e^{-|z-w|^2} |z-w|^2\big)} dA(z)dA(w),
\end{align}
see Lemma \ref{kacrice} for details. We note that (in contrast with the corresponding expressions for correlations between critical points) the denominator in \eqref{eq_kra} is bounded above and below
by positive constants that are independent of $z,w$ as soon as $\rho$ is sufficiently small. For the numerator
in \eqref{eq_kra}, we shall make use of the following elementary identities, which rely on the analyticity of $G$:
\begin{align}\label{eq_m1}
&\jac G(z) = 
|\partial G(z)|^2 - |\bar{\partial} G(z)|^2=
|\partial G(z)|^2,
\\\label{eq_m2}
&\jac F(0) = 
|\partial F(0)|^2 - |\bar{\partial} F(0)|^2=
|\partial^2 G(0)|^2- |G(0)|^2.
\end{align}
Following \cite{ladgham2023local}, we introduce terminology to describe asymptotic expansions more succinctly. For $k\geq0$, we denote by $\OrdoP(\rho^k)$ a random variable $X$ such that the absolute value of the conditioned variable 
$(X \,\big|\, F(z)=G(w)=0)$ 
is $\leq \rho^k Y$, where 
$Y \geq 0$ has stretched exponential tails with constants independent of $z$ and $w$ in some given neighborhood $B_\rho(0)$ of the origin --- that is,
\begin{align*}
\sup_{|z|,|w|<\rho} \mathbb{P}\big[ \rho^{-k}|X| > t\,|\,F(z)=G(w)=0\big] \leq K e^{-k t^\gamma}, \qquad \mbox{for all }t>0,
\end{align*}
and some constants $k,K,\gamma>0$. (We will
only use this notation in contexts where the conditioning in question is well-defined). Repeating the argument of Proposition \ref{condjac}, the Taylor expansions of $G$ and $F$ have $\OrdoP(\rho^k)$ error terms for adequate $k$, a fact that will be used without further mention. In particular, for each $k \geq 0$, $\partial^k G(z)$ and $\partial^k F(z)$ are $\OrdoP(1)$ if $|z|<\rho$. We will also use repeatedly that
\[
\OrdoP(\rho^k)\cdot \OrdoP(\rho^l)= \OrdoP(\rho^{k+l}). 
\]
Second, we say that two random variables $X, Y$ are \emph{equal under a certain event $B$} if $X \cdot \mathrm{1}_B = T \cdot \mathrm{1}_B$. Note that if $X=Y$ under $B_1$ and also under $B_2$, then $X=Y$ under $B_1 \cup B_2$.

After this preparation, we can prove the following.

\begin{prop}\label{prop_zcp}
There exist constants $c, C > 0$ such that
\begin{align}\label{eq_zcpa}
c \rho^6 \leq 
\mathbb{E}[\Nz \cdot \Ncp] \leq C \rho^6, \quad 0 < \rho < 1.
\end{align}
\end{prop}
\begin{proof}
The denominator in \eqref{eq_kra} is bounded below, while, for $|z|,|w| \leq 1$,
$\jac G(z)$ and $\jac F(w)$ are $\OrdoP(1)$ conditionally on $F(z)=F(w)=0$. Thus the numerator in \eqref{eq_kra} is bounded, and we conclude that $\mathbb{E}[\Nz \cdot \Ncp]$ is bounded and depends monotonically on $\rho$. Hence, it is enough to prove \eqref{eq_zcpa} for small $\rho$.

\noindent {\bf Step 1}. 
As discussed, for sufficiently small $\rho$, the Kac-Rice formula \eqref{eq_kra} reduces asymptotically to 
\begin{align}\label{eq_kram}
\mathbb{E}[\Nz \Ncp] \asymp 
\int_{B_\rho^2} {\mathbb{E}\big[|\partial G(z)|^2 \cdot \jac F(w)| \cdot \mathrm{1}_{\jac F(w) > 0}\, \big| \,G(z) = F(w) = 0\big]}dA(z)dA(w).
\end{align}
We start by writing some Taylor expansions, always for $z, w \in B_\rho$ and
under the condition $G(z)=F(w)=0$: 
\[
\begin{split}
&\partial G(z)= \partial G(w)+ (z-w) \partial^2 G(w) + (z-w)^2E_1^{z,w}
\\
&= (\bar w G(w) - F(w)) + (z-w) \partial^2 G(w) + (z-w)^2E_1^{z,w} 
\\
&= \bar w G(w) + (z-w) \partial^2 G(w) + (z-w)^2E_1^{z,w} 
\\
&= \bar w \big[G(z) + (z-w)E_2^{z,w} \big] + (z-w) \partial^2 G(w)+ (z-w)^2E_1^{z,w} \\
&= (z-w) \partial^2 G(w) + (z-w)^2 E_1^{z,w}+ \bar w (z-w) E_2^{z,w}, 
\end{split}
\]
where the error terms are $\OrdoP(1)$. Consequently, 
\[
|\partial G(z)|^2= |z-w|^2 |\partial^2 G(w)|^2 + \Ordo_{\mathbb{P}}(\rho^3). 
\]
We also have (using $G(z)=F(w)=0$) that
\begin{align} \label{eq:jacF_exp}
&\jac F(w) = |\partial F(w)|^2- |\bar \partial F(w)|^2= |\partial^2 G(w) -\bar w \partial G(w)|^2- |G(w)|^2 \\
&\quad= |\partial^2 G(w)- \bar w^2 G(w)|^2-|G(w)|^2 \\
&\quad=
|\partial^2 G(w)|^2- 2 \mathrm{Re}(\partial^2G(w) w^2 \overline{G(w)})+ (|w|^4-1)|G(w)|^2 \\
&\quad=
|\partial^2 G(w)|^2- 2 \mathrm{Re}(\partial^2G(w) w^2 \overline{G(w)})+ (|w|^4-1)|(z-w) E_3^{z,w}|^2 \\
&\quad=|\partial^2 G(w)|^2 + \OrdoP(\rho),
\end{align}
where the term $E^{z,w}_3$ is defined by $0=G(z)=G(w)+(z-w)E^{z,w}_3$. Therefore 
\begin{align}\label{eq_very}
\begin{aligned}
|\partial G(z)|^2 \cdot |\jac F(w)| 
&= \big( |z-w|^2 \cdot |\partial^2 G(w)|^2+ \OrdoP(\rho^3) \big)\cdot \big( |\partial^2 G(w)|^2 + \OrdoP(\rho)\big) \\
&= 
|z-w|^2 \cdot |\partial^2 G(w)|^4 + \OrdoP(\rho^3).
\end{aligned}
\end{align}
\smallskip

\noindent {\bf Step 2}.
We consider the factor 
$\mathrm{1}_{\jac F(w) > 0}$ and use \eqref{eq:jacF_exp} to write 
\begin{align}\label{eq_very2}
\jac F(w)= |\partial^2 G(w)|^2 + E_4^{z,w}, 
\end{align}
with $E_4^{z,w}= \OrdoP(\rho)$. 
Since
\[
\big| \mathrm{1}_{\jac F(w)>0} - 1 \big| 
= \mathrm{1}_{\jac F(w) \leq 0} \leq 
\mathrm{1}_{|\partial^2 G(w)|^2 \leq |E^{z,w}_4|},
\]
we can estimate
\begin{align*}
&\mathbb{E}\big[|\partial G(z)|^2 \jac F(w) \mathrm{1}_{\jac F(w) > 0} \,\big| \,  G(z) = F(w) = 0 \big] \\
&
=\mathbb{E}\big[ \big(|z-w|^2|\partial^2 G(w)|^4 + \OrdoP(\rho^3) \big)(\mathrm{1}_{\jac F(w)>0}-1) \,\big| \, G(z)=F(w)=0 \big] \\
&\qquad+ \mathbb{E}\big[ \big(|z-w|^2|\partial^2 G(w)|^4 + \OrdoP(\rho^3) \big) \,\big| \, G(z)=F(w)=0 \big]
\\
&=|z-w|^2 \cdot \mathbb{E}\big[ |\partial^2 G(w)|^4(\mathrm{1}_{\jac F(w)>0}-1) \,\big| \, G(z)=F(w)=0 \big] \\
&\qquad+ |z-w|^2 \cdot \mathbb{E}\big[ |\partial^2 G(w)|^4  \,\big| \, G(z)=F(w)=0 \big]+\Ordo(\rho^3),
\end{align*}
where we used that $
\E \big[ \OrdoP(\rho^k) \,\big|\,G(z)=F(w)=0 \big]=\Ordo(\rho^k)$.
Hence,
\begin{align*}
\Big| &\mathbb{E}\big[|\partial G(z)|^2 \jac F(w) \mathrm{1}_{\jac F(w) > 0} \,\big| \,  G(z) = F(w) = 0\big] \\
 &\qquad-|z-w|^2 \cdot \mathbb{E}\big[|\partial^2 G(w)|^4  \, \big| \, G(z)=F(w)=0 \big]  \Big| \\
 &\leq 
 |z-w|^2 \cdot \mathbb{E}\big[ |\partial^2 G(w)|^4\,|\mathrm{1}_{\jac F(w)>0}-1| \,\big| \, G(z)=F(w)=0 \big] + \Ordo(\rho^3)
 \\
  &\leq 
 |z-w|^2 \cdot \mathbb{E}\big[ |\partial^2 G(w)|^4 \mathrm{1}_{|\partial^2 G(w)|^2 \leq |E^{z,w}_4|} \,\big| \, G(z)=F(w)=0 \big] + \Ordo(\rho^3)
 \\
&\leq 
|z-w|^2 \cdot \mathbb{E}\big[|E_4^{z,w}|^2  \,\big| \, G(z)=F(w)=0 \big] +  \Ordo(\rho^3) \\
&= 
\Ordo(\rho^3).
\end{align*}
That is,
\begin{align}\label{eq_very3}
&\mathbb{E}\big[|\partial G(z)|^2 \cdot \jac F(w) \cdot \mathrm{1}_{\jac F(w) > 0} \,\big| \,  G(z) = F(w) = 0\big] 
\\
&\quad= |z-w|^2 \cdot \mathbb{E}\big[|\partial^2 G(w)|^4  \, \big| \, G(z)=F(w)=0 \big] + \Ordo(\rho^3).
\end{align}

\noindent {\bf Step 3}. We look into 
\[
\mathbb{E}\big[|\partial^2 G(w)|^4  \, \big| \, G(z)=F(w)=0 \big].
\]
Recall  that $\big(G(0), \partial G(0), \tfrac12 \partial^2 G(0) \big)$ is a standard complex Gaussian vector, so the covariance matrix of 
$\big( G(z), F(w), \partial^2 G(w) \big)$ is of the form 
\[
\begin{pmatrix} 1 & 0&0 \\
0 &1 & 0 \\
0 & 0 & 2 \end{pmatrix} + \Ordo(\rho).
\]
Using the Gaussian regression formula for sufficiently small $\rho$ (see, e. g., \cite[Eq. 1.5]{level}), we have that 
\begin{align*}
\mathrm{Var} \big[ \partial^2 G(w)\,\big|\, G(z)=F(w)=0\big] = 2+ \Ordo(\rho).
\end{align*}
Since $\partial^2 G(w)$ is a zero mean complex random variable, we then have,
\begin{align}\label{eq_very4}
\mathbb{E}\big[|\partial^2 G(w)|^4  \, \big| \, G(z)=F(w)=0 \big] = 8+ \Ordo(\rho).
\end{align}
Combining \eqref{eq_very3} and \eqref{eq_very4}, we have obtained
\begin{align*}
&\mathbb{E}[|\partial G(z)|^2 | \cdot \jac F(w)| \cdot \mathrm{1}_{\jac F(w) > 0} \,\big| \,  G(z)= F(w) = 0]
\\&\quad= |z-w|^2( 8+ \Ordo(\rho))+ 
\Ordo( \rho^3) = 8 |z-w|^2 + \Ordo(\rho^3).
\end{align*}
Inserting this into the approximate Kac-Rice formula \eqref{eq_kram} gives
\[
\mathbb{E}[\Nz \Ncp] \asymp
4\int_{B_\rho^2} |z-w|^2 dz dw + \Ordo(\rho^7) \asymp \rho^6. 
\]
\end{proof}

\section{Zeros and negatively signed critical points}\label{sec_zp2}
Finally, we look into correlations between between zeros and critical points with negative index. This time, the Kac-Rice formula reads: 
\begin{align}\label{eq_krb}
 \mathbb{E}[\Nz \cdot \Ncm]= \int_{B_\rho \times B_\rho} \frac{1}{\pi^2}
  \frac{\mathbb{E}[|\partial G(z)|^2 \cdot |\jac F(w)| \cdot \mathrm{1}_{\jac F(w) < 0} \,\big| \,  G(z) = F(w) = 0]}{e^{|z|^2 + |w|^2}\big(1- |z-w|^2e^{-|z-w|^2}\big)}dA(z)dA(w),
\end{align}
see Lemma \ref{kacrice} for details. The integrand in \eqref{eq_krb} depends only on $z-w$ --- see Lemma \ref{lemmashift} --- and we will exploit this fact to reduce analysis to the case $w=0$. As before, we can asymptotically neglect the denominator. The analysis of the numerator is a bit more involved than in the case of critical points with positive index, and will require a series of lemmas.

\begin{lemma} \label{lem:minmax1}
For all sufficiently small $\rho>0$ and all $|z|<\rho$, we have, conditionally on $G(z)=F(0)=0$, that the following equalities hold under either the event $\{\jac F(0)<0\}$ or the event $\{|\partial^2 G(0)|^2 \leq \tfrac{1}{36} |z|^6 |\partial^3 G(0)|^2\}$:
\begin{equation} \label{eq:d2G_bound}
\partial^2 G(0) = \OrdoP( \rho^3),
\end{equation}
\begin{equation} \label{eq:jacF_bound}
\jac F(0) = |\partial^2 G(0)|^2 - \tfrac{1}{36} |z|^6 |\partial^3 G(0)|^2 + \OrdoP(\rho^7),
\end{equation}
and
\begin{equation} \label{eq:dG_bound}
|\partial G(z)|^2= \tfrac14 |z|^4 |\partial^3 G(0)|^2 + \OrdoP(\rho^5). 
\end{equation}
\end{lemma}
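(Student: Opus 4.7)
My plan is to work throughout conditionally on $G(z) = F(0) = 0$, note that the relation $F(w) = \bar w G(w) - \partial G(w)$ turns $F(0) = 0$ into $\partial G(0) = 0$, and recall from \eqref{eq_m2} that $\jac F(0) = |\partial^2 G(0)|^2 - |G(0)|^2$. Taylor-expanding $G$ about the origin to order three with $\OrdoP(\rho^4)$ remainder and plugging in both $G(z) = 0$ and $\partial G(0) = 0$, I obtain the key identity
\begin{align*}
G(0) = -\tfrac{z^2}{2}\partial^2 G(0) - \tfrac{z^3}{6}\partial^3 G(0) + \OrdoP(\rho^4), \qquad (\ast)
\end{align*}
which will be the workhorse for all three conclusions.

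I would first prove \eqref{eq:d2G_bound} separately on each event. On $\{|\partial^2 G(0)|^2 \leq \tfrac{1}{36}|z|^6|\partial^3 G(0)|^2\}$ the bound is immediate since $|\partial^3 G(0)| = \OrdoP(1)$. On $\{\jac F(0) < 0\}$, i.e., $|\partial^2 G(0)| < |G(0)|$, I combine this with $(\ast)$ and the triangle inequality to get
\begin{align*}
|\partial^2 G(0)| \leq \tfrac{|z|^2}{2}|\partial^2 G(0)| + \tfrac{|z|^3}{6}|\partial^3 G(0)| + \OrdoP(\rho^4);
\end{align*}
for $\rho$ small enough that $|z|^2/2 \leq 1/2$, the first term on the right is absorbed into the left side, yielding $|\partial^2 G(0)| \leq \tfrac{|z|^3}{3}|\partial^3 G(0)| + \OrdoP(\rho^4) = \OrdoP(\rho^3)$.

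With \eqref{eq:d2G_bound} available on the union of the two events, I derive \eqref{eq:jacF_bound} by squaring $(\ast)$ and bookkeeping orders: $|\tfrac{z^2}{2}\partial^2 G(0)|^2 = \OrdoP(\rho^{10})$, the cross term $\mathrm{Re}(\overline{z^2\partial^2 G(0)}\,z^3\partial^3 G(0))$ is of order $\rho^5 \cdot \rho^3 = \OrdoP(\rho^8)$, cross terms with the $\OrdoP(\rho^4)$ remainder contribute at most $\rho^3 \cdot \rho^4 = \OrdoP(\rho^7)$, and the leading term is $|\tfrac{z^3}{6}\partial^3 G(0)|^2 = \tfrac{|z|^6}{36}|\partial^3 G(0)|^2$. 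Thus $|G(0)|^2 = \tfrac{|z|^6}{36}|\partial^3 G(0)|^2 + \OrdoP(\rho^7)$, and substituting into $\jac F(0) = |\partial^2 G(0)|^2 - |G(0)|^2$ yields \eqref{eq:jacF_bound}. For \eqref{eq:dG_bound}, I Taylor-expand $\partial G(z) = z\partial^2 G(0) + \tfrac{z^2}{2}\partial^3 G(0) + \OrdoP(\rho^3)$ (using $\partial G(0) = 0$), observe that the first summand is $\OrdoP(\rho^4)$ by \eqref{eq:d2G_bound} and hence negligible, and take the modulus squared using $|\partial^3 G(0)| = \OrdoP(1)$.

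The main obstacle is the bootstrap inside the $\{\jac F(0) < 0\}$ half of the argument for \eqref{eq:d2G_bound}: the hypothesis $|\partial^2 G(0)|^2 < |G(0)|^2$ is implicit in $\partial^2 G(0)$ because $(\ast)$ also features $\partial^2 G(0)$ on its right-hand side, so one must pick $\rho$ small enough for the self-referential $\tfrac{|z|^2}{2}|\partial^2 G(0)|$ contribution to be absorbed. Once this $\OrdoP(\rho^3)$ bound is in hand, the subsequent estimates are direct Taylor bookkeeping, and the three conclusions combine on the union of the two events via the $X \cdot \mathrm{1}_B = Y \cdot \mathrm{1}_B$ formalism recalled in Section \ref{sec_zp1}.
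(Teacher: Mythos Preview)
Your proof is correct and follows essentially the same route as the paper: the key identity $(\ast)$, the bootstrap for $\partial^2 G(0)=\OrdoP(\rho^3)$ on each event, and then straightforward Taylor bookkeeping for \eqref{eq:jacF_bound} and \eqref{eq:dG_bound}. Your treatment of the bootstrap under $\{\jac F(0)<0\}$ is in fact slightly cleaner than the paper's: you apply the triangle inequality directly to $|\partial^2 G(0)|<|G(0)|$ and absorb the linear term $\tfrac{|z|^2}{2}|\partial^2 G(0)|$, whereas the paper squares first, obtains a quadratic inequality $|\partial^2 G(0)|^2 \le |\partial^2 G(0)|\,\OrdoP(\rho^5)+\OrdoP(\rho^6)$, and then resolves it via a case split.
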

\begin{rem}
Recall that, according to our terminology, the meaning of \eqref{eq:d2G_bound} is that there exist constants $\alpha, \beta, C>0$ such that 
\[
\mathbb{P}\big[ |\partial^2 G(0)|\cdot 1_B > t\,\big|\, G(z)=F(0)=0\big] \leq C e^{-\alpha t^{\beta}}, \qquad t >0, |z|<\rho,
\]
where $B=\{\jac F(0)<0\}$ or $B=\{|\partial^2 G(0)|^2 \leq \tfrac{1}{36} |z|^6 |\partial^3 G(0)|^2\}$. Similar remarks apply to the other parts of Lemma \ref{lem:minmax1}. It is also possible to formulate Lemma \ref{lem:minmax1} in terms of conditioning on $B \cup \{G(z)=F(0)=0\}$.
\end{rem}
\begin{proof}[Proof of Lemma \ref{lem:minmax1}]
\noindent {\bf Step 1}. We assume throughout that $G(z)=F(0)=0$. Using that $\partial G(0)=-F(0)=0$, we have
\[
0 = G(0)+ \tfrac12 z^2 \partial^2 G(0) + \tfrac16 z^3 \partial^3 G(0) + \OrdoP(\rho^4), 
\]
so
\begin{equation} \label{eq:G_expansion}
G(0) =- \tfrac12 z^2 \partial^2 G(0) - \tfrac16 z^3 \partial^3 G(0) + \OrdoP(\rho^4),
\end{equation}
and 
\begin{align} \label{eq:G_modsquare}
|G(0)|^2 &= \tfrac14 |z|^4 |\partial^2 G(0)|^2 + \frac{1}{36}
|z|^6 \big| \partial^3 G(0)|^2 + \frac{1}{6}\mathrm{Re}\big(z^2 \partial^2G(0) \bar z^3 \overline{\partial^3G(0)}\big)+\OrdoP(\rho^6).
\end{align}
\smallskip

\noindent {\bf Step 2}. We assume now that $\jac F(0)<0$, that is, we assume that $|\partial^2 G(0)|^2 < |G(0)|^2$. In terms of \eqref{eq:G_modsquare} this means that
\[ \label{eq:sec_der_bound1}
|\partial^2 G(0)|^2 < \tfrac14 |z|^4 |\partial^2 G(0)|^2 + \frac{1}{36}
|z|^6 \big| \partial^3 G(0)|^2 + \frac{1}{6}|z|^5 |\partial^2G(0)| |\partial^3G(0)| +\OrdoP(\rho^6). 
\]
We let the LHS absorb the term $\tfrac14 |z|^4 |\partial^2 G(0)|^2$ --- which can be done for small enough $\rho$ --- with the result  
\[
|\partial^2 G(0)|^2 \leq |\partial^2 G(0)| \OrdoP( \rho^5) + \OrdoP( \rho^6),
\]
or, in other words,
\[ \label{eq:sec_der_bound2}
|\partial^2 G(0)|^2 \leq ( |\partial^2 G(0)| \rho^5 + \rho^6) E^z,
\]
where $E^z$ is a suitable function with stretched exponential tails when conditioned to $G(z)=F(0)=0$ (and the corresponding constants are independent of $z \in B_\rho(0)$). For almost every realization of $G$ we must have
$|\partial^2 G(0)|^2 \leq 2 |\partial^2 G(0)| \rho^5 E^z$ or $|\partial^2 G(0)|^2 \leq 2  \rho^6 E^z$, which implies that 
\begin{align*}
|\partial G(0)| \leq 2\rho^5 E^z+ \sqrt{2} \rho^3 (E^z)^{1/2}
= \rho^3 \big( 2\rho^2 E^z + \sqrt{2} (E^z)^{1/2}\big).
\end{align*}
This proves 
\eqref{eq:d2G_bound} under $\{\jac F(0)<0\}$. On the other hand, under $\{|\partial^2 G(0)|^2 \leq \tfrac{1}{36} |z|^6 |\partial^3 G(0)|^2\}$ we have
$|\partial^2 G(0)|^2 = \OrdoP(\rho^6)$ and \eqref{eq:d2G_bound} also holds.

\smallskip

\noindent {\bf Step 3}. Under either of the two events in question,  we reinspect \eqref{eq:G_expansion} using that $\partial^2 G(0) = \OrdoP(\rho^3)$ and find out that \eqref{eq:G_modsquare} can be improved to
\begin{align} \label{eq:G_modsquare2}
|G(0)|^2 &= \tfrac14 |z|^4 |\partial^2 G(0)|^2 + \frac{1}{36}
|z|^6 \big| \partial^3 G(0)|^2 + \frac{1}{6}\mathrm{Re}\big(z^2 \partial^2G(0) \bar z^3 \overline{\partial^3G(0)}\big)+\OrdoP(\rho^7)
\\
&=\frac{1}{36}
|z|^6 \big| \partial^3 G(0)|^2 + \OrdoP(\rho^7).
\end{align}
A direct computation now gives \eqref{eq:jacF_bound}:
\begin{equation} \label{eq:JacF_exp2}
\jac F(0) = |\partial^2 G(0)|^2- |G(0)|^2 
= |\partial^2 G(0)|^2- \frac{1}{36}|z|^6 |\partial^3 G(0)|^2 + \OrdoP(\rho^7).
\end{equation}

\smallskip

\noindent {\bf Step 4}. Under either of the two events in question, we Taylor expand $\partial G(z)$ around $0$. Taking into account \eqref{eq:d2G_bound}, we get
\begin{equation} \label{eq:partialG_exp}
\partial G(z) =  z \partial^2 G(0) + 
\frac12 z^2 \partial^3 G(0)+ \OrdoP(\rho^3)= \frac12 z^2 \partial^3 G(0) + \OrdoP(\rho^3). 
\end{equation}
This implies
\[
|\partial G(z)|^2= \frac14 |z|^4 |\partial^3 G(0)|^2 + \OrdoP(\rho^5),
\]
which is \eqref{eq:dG_bound}.
\end{proof}

\begin{lemma} \label{lem_minmax2}
For sufficiently small $\rho>0$, we have 
\begin{align}
&\mathbb{E} \big[ |\partial G(z)|^2 \cdot |\jac F(0)| \cdot \mathrm{1}_{\jac F(0) <0 } \,\big|\, G(z)=F(0)=0 \big]  \\
&= \E \Big[ \big| \tfrac12 z^2 \partial^3 G(0) \big|^2 \cdot \big| |\partial^2 G(0)|^2 - \tfrac{1}{36} |z|^6 |\partial^3 G(0)|^2 \big| \cdot
\mathrm{1}_{|\partial^2 G(0)|^2 < \tfrac{1}{36} |z|^6 |\partial^3 G(0)|^2} \,\big|\, G(z)=F(0)=0 \Big] \\
&\qquad+ \Ordo(\rho^{16.5}),
\end{align}
where implied constant is independent of $z \in B_\rho(0)$.
\end{lemma}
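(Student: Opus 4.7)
The plan is to replace $|\partial G(z)|^2$, $\jac F(0)$, and the indicator $\mathrm{1}_{\jac F(0)<0}$ by the leading-order surrogates supplied by Lemma~\ref{lem:minmax1}, and then to control the resulting error by a small-ball probability bound combined with H\"older's inequality. To fix notation, set
\[
X := |\partial G(z)|^2, \quad Y := \jac F(0), \quad X' := \bigl|\tfrac{1}{2} z^2 \partial^3 G(0)\bigr|^2, \quad Y' := |\partial^2 G(0)|^2 - \tfrac{1}{36} |z|^6 |\partial^3 G(0)|^2,
\]
and let $B_1 := \{Y<0\}$ and $B_2 := \{Y'<0\}$. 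All expectations and probabilities below are conditional on $\{G(z)=F(0)=0\}$; the claim reduces to $\mathbb{E}[X|Y|\mathrm{1}_{B_1}] = \mathbb{E}[X'|Y'|\mathrm{1}_{B_2}] + \Ordo(\rho^{16.5})$.

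For the pointwise comparison, Lemma~\ref{lem:minmax1} yields, on $B_1 \cup B_2$, the relations $X = X' + \OrdoP(\rho^5)$, $Y = Y' + \OrdoP(\rho^7)$, and $\partial^2 G(0) = \OrdoP(\rho^3)$; in particular $X' = \OrdoP(\rho^4)$ and $|Y'| \leq |\partial^2 G(0)|^2 + \tfrac{1}{36}|z|^6|\partial^3 G(0)|^2 = \OrdoP(\rho^6)$ on this set. On $B_1 \cap B_2$ both $|Y|=-Y$ and $|Y'|=-Y'$, and multiplying out gives $X|Y| = X'|Y'| + \OrdoP(\rho^{11})$. On $B_1 \setminus B_2$, $Y<0\leq Y'$ forces $|Y'| = Y' \leq Y'-Y = \OrdoP(\rho^7)$, hence $|Y| \leq |Y'| + |Y-Y'| = \OrdoP(\rho^7)$ and $X|Y| = \OrdoP(\rho^{11})$; symmetrically, on $B_2\setminus B_1$ one has $X'|Y'| = \OrdoP(\rho^{11})$. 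Combining these cases,
\[
X|Y|\mathrm{1}_{B_1} - X'|Y'|\mathrm{1}_{B_2} = \OrdoP(\rho^{11}) \cdot \mathrm{1}_{B_1 \cup B_2}.
\]

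The key quantitative input is then the small-ball estimate $\mathbb{P}(B_1 \cup B_2) \lesssim \rho^6$. For $B_2$, the Gaussian regression argument of Lemma~\ref{conddens} shows that the conditional law of $(\partial^2 G(0), \partial^3 G(0))$ given $G(z)=F(0)=0$ is Gaussian with uniformly non-degenerate covariance for small $\rho$; conditioning further on $\partial^3 G(0)=w$ keeps the density of $\partial^2 G(0)$ uniformly bounded, so $\mathbb{P}(|\partial^2 G(0)|^2 < \tfrac{1}{36}|z|^6|w|^2 \mid \cdots) \lesssim \rho^6 |w|^2$, and integrating in $w$ yields $\mathbb{P}(B_2)\lesssim \rho^6$. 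For $B_1$, Lemma~\ref{lem:minmax1} gives $B_1 \subseteq \{|\partial^2 G(0)|^2 < \tfrac{1}{36}|z|^6|\partial^3 G(0)|^2 + \rho^7 Z\}$ with $Z$ having stretched-exponential tails uniformly in $z$; splitting according to $\{Z\leq \rho^{-\varepsilon}\}$ for small $\varepsilon$ and applying the same small-ball argument to the slightly enlarged event delivers $\mathbb{P}(B_1)\lesssim \rho^6$.

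To conclude, writing the pointwise error as $\rho^{11} Z'$ with $Z'\geq 0$ of stretched-exponential tails (uniform in $z$), H\"older's inequality with exponents $p=12$ and $q=12/11$ gives
\[
\mathbb{E}\bigl[\rho^{11} Z' \mathrm{1}_{B_1\cup B_2}\bigr] \leq \rho^{11} \cdot \mathbb{E}[(Z')^{12}]^{1/12} \cdot \mathbb{P}(B_1\cup B_2)^{11/12} \lesssim \rho^{11} \cdot \rho^{6 \cdot 11/12} = \rho^{16.5}.
\]
The main obstacle is the small-ball estimate $\mathbb{P}(B_1\cup B_2)\lesssim \rho^6$, which demands uniform control of the conditional density of $\partial^2 G(0)$; this is precisely the non-degeneracy content of Lemma~\ref{conddens} applied to the enlarged Gaussian vector $(G(z), F(0), \partial^2 G(0), \partial^3 G(0))$. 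Once this is granted, everything else is bookkeeping of Taylor errors from Lemma~\ref{lem:minmax1} together with H\"older.
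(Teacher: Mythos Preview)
Your proposal is correct and follows essentially the same approach as the paper: use Lemma~\ref{lem:minmax1} for the Taylor replacements, establish the small-ball bound $\mathbb{P}(B_1\cup B_2)\lesssim\rho^6$ via the non-degenerate conditional covariance of $(\partial^2 G(0),\partial^3 G(0))$, and finish with H\"older. The only organizational difference is that you handle the indicator swap $\mathrm{1}_{B_1}\to\mathrm{1}_{B_2}$ simultaneously with the main-term replacement via the case split $B_1\cap B_2$, $B_1\setminus B_2$, $B_2\setminus B_1$, whereas the paper treats it as a separate step; your packaging is slightly cleaner but the content is the same.
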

\begin{proof}
\noindent \textbf{Step 1}. We show that
\begin{align}\label{wes1}
&\mathbb{E} \big[ |\partial G(z)|^2 \cdot |\jac F(0)| \cdot \mathrm{1}_{\jac F(0) <0 } \,\big|\, G(z)=F(0)=0 \big]  \\
&
\quad= \mathbb{E} \big[
\tfrac14 |z|^4 |\partial^3 G(0)|^2 \cdot \big| |\partial^2 G(0)|^2 - \tfrac{|z|^6}{36} |\partial^3 G(0)|^2 \big| \cdot \mathrm{1}_{\jac F(0)<0}\,\big|\, G(z)=F(0)=0
\big]
\\
&\qquad\qquad+
\mathbb{E} \big[\mathrm{1}_{\jac F(0)<0} \cdot \OrdoP(\rho^{11})
\,\big|\, G(z)=F(0)=0\big].
\end{align}
Indeed, by Lemma \ref{lem:minmax1}, we have that, conditionally on $G(z)=F(0)=0$,
\begin{align}\label{eq:exp} 
&|\partial G(z)|^2 \cdot |\jac F(0)| \cdot \mathrm{1}_{\jac F(0)<0} 
\\
&\qquad= 
\big(
\tfrac14 |z|^4 |\partial^3 G(0)|^2 + \OrdoP(\rho^5)\big)
\cdot
\big| |\partial^2 G(0)|^2 - \tfrac{|z|^6}{36} |\partial^3 G(0)|^2 + \OrdoP(\rho^7) \big| \cdot \mathrm{1}_{\jac F(0)<0} 
\\
&\qquad=\tfrac14 |z|^4 |\partial^3 G(0)|^2 \cdot \big| |\partial^2 G(0)|^2 - \tfrac{|z|^6}{36} |\partial^3 G(0)|^2 \big| \cdot \mathrm{1}_{\jac F(0)<0} + \mathrm{1}_{\jac F(0)<0} \cdot \OrdoP(\rho^{11}),
\end{align}
which readily gives \eqref{wes1}.

\smallskip

\noindent {\bf Step 2}. We show that, conditionally on $G(z)=F(0)=0$,
\begin{align}\label{wes2}
\mathrm{1}_{\jac F(0)<0} \leq \mathrm{1}_{|\partial^2 G(0)|^2 \leq \frac{1}{18} |z|^6 |\partial^3 G(0)|^2}+ \mathrm{1}_{|\partial^2 G(0)|^2 \leq 2\rho^6}+
\mathrm{1}_{ |E^z| \geq \rho^{-1}}.
\end{align}

By Lemma \ref{lem:minmax1}, conditionally on $G(z)=F(0)=0$,
\begin{align}
\jac F(0) = |\partial^2 G(0)|^2 - \tfrac{1}{36} |z|^6 |\partial^3 G(0)|^2 + \rho^7 E^z,
\end{align}
where $E^z = \OrdoP(1)$. If $\jac F(0)<0$, then
\[
|\partial^2 G(0)|^2 \leq \frac{1}{18} |z|^6 |\partial^3 G(0)|^2
\mbox{ or } |\partial^2 G(0)|^2 \leq 2\rho^7 |E^z|.
\]
In addition, if the last condition holds, then at least one of next two conditions will hold: 
\[
|\partial^2 G(0)|^2 \leq 2\rho^6
\mbox{ or } \rho| E^z| \geq 1. 
\]
This proves \eqref{wes2}.

\smallskip

\noindent {\bf Step 3}. We show that 
\begin{align}\label{wes3}
\mathbb{E} \big[  \mathrm{1}_{|\partial^2 G(0)|^2 \leq \frac{1}{18} |z|^6 |\partial^3 G(0)|^2}
\,\big|\, G(z)=F(0)=0\big] = \Ordo(\rho^6).
\end{align}
Recalling that $\big(G(0), \partial G(0), \tfrac{1}{\sqrt{2!}} \partial^2 G(0), \tfrac{1}{\sqrt{3!}} \partial^3 G(0) \big)$ is a standard complex vector, we see that the covariance of
\begin{equation} \label{eq:der_vector}
\big(G(z), F(0), \partial^2G(0), \partial^3G(0) \big).
\end{equation}
is
\[
\begin{pmatrix}
1 & 0 & 0 &0 \\
0 & 1 & 0 & 0 \\
0 & 0 & 2 & 0 \\
0 & 0& 0 & 3! 
\end{pmatrix}+ \Ordo(\rho).
\]
Hence, for sufficiently small $\rho$, the covariance of $\big( \partial^2 G(0), \partial^3 G(0)\big)$ conditioned on $G(z)=F(0)=0$ is 
\begin{align}\label{eq_af}
\begin{pmatrix} 2 & 0 \\ 0 & 6 \end{pmatrix} + \Ordo(\rho)
\end{align}
and is bounded above and below by positive multiples of the identity matrix (in the Loewner order). Therefore, if we let $(Z_1,Z_2)$ be a standard complex vector and set $r=\sqrt{\tfrac{1}{18}} |z|^3$, we have
\begin{align}
&\mathbb{E} \big[  \mathrm{1}_{|\partial^2 G(0)|^2 \leq \frac{1}{18} |z|^6 |\partial^3 G(0)|^2}
\,\big|\, G(z)=F(0)=0\big]
\asymp
\mathbb{E}[\mathrm{1}_{|Z_1| < r |Z_2|}] 
\\
&\quad= \int_{\mathbb{C}^2} \mathrm{1}_{|z_1| \leq r |z_2|} \frac{1}{\pi^2} e^{-|z_1|^2} e^{-|z_2|^2} dA(z_1) dA(z_2)
\\
&\quad= 4\int_0^\infty \int_0^{r k_2} k_1 k_2 \mathrm{1}_{k_1 < r k_2} e^{-k_1^2 -k_2^2} dk_1 dk_2 = \frac{r^2}{1+r^2} \asymp r^2 \asymp |z|^6 \leq \rho^6,
\end{align}
as long as $\rho$ is sufficiently small. (See Lemma \ref{lem_explicit} or \cite{hor25work} for a computation of the integral above.)

We note that the constant $\frac{1}{18}$ played no special role in the proof of \eqref{wes3}; hence, we also have
\begin{align}\label{wes3a}
\mathbb{E} \big[  \mathrm{1}_{|\partial^2 G(0)|^2 \leq \frac{1}{36} |z|^6 |\partial^3 G(0)|^2}
\,\big|\, G(z)=F(0)=0\big] = \Ordo(\rho^6).
\end{align}
\smallskip
\noindent {\bf Step 4}. Considering again the asymptotic form \eqref{eq_af} of the covariance of $\big( \partial^2 G(0), \partial^3 G(0)\big)$ conditioned on $G(z)=F(0)=0$, we can let $Z$ be a standard complex vector, set $r=\sqrt{2}\rho^3$, and estimate
$\mathbb{E} \big[  \mathrm{1}_{|\partial^2 G(0)|^2 \leq 2\rho^6}
\,\big|\, G(z)=F(0)=0\big] \asymp \mathbb{E} \big[ |Z| \leq r\big]$ to conclude that
\begin{align}\label{wes4}
\mathbb{E} \big[  \mathrm{1}_{|\partial^2 G(0)|^2 \leq 2\rho^6}
\,\big|\, G(z)=F(0)=0\big] = \Ordo(\rho^6).
\end{align}
In addition, since $E^z = \OrdoP(1)$ we have that
\begin{align}\label{wes5}
\mathbb{E} \big[  \mathrm{1}_{ |E^z| \geq \rho^{-1}}
\,\big|\, G(z)=F(0)=0\big] = \Ordo(\rho^6).
\end{align}
(Here, the estimate also holds with any exponent in lieu of $6$.)

\smallskip
\noindent {\bf Step 5}. As a consequence of \eqref{wes2}, \eqref{wes3}, \eqref{wes4} and \eqref{wes5} we have that
\begin{align}\label{eq_lplp}
\mathbb{E} \big[  \mathrm{1}_{\jac F(0)<0}
\,\big|\, G(z)=F(0)=0\big] = \Ordo(\rho^6).
\end{align}
We let $p>1$, set $1/p+1/q=1$ and use \eqref{eq_lplp} to bound the error term in \eqref{wes1}:
\begin{align}
&\mathbb{E} \big[\mathrm{1}_{\jac F(0)<0} \cdot \OrdoP(\rho^{11})
\,\big|\, G(z)=F(0)=0\big]
\\
&\quad
\leq
\mathbb{E} \Big( \OrdoP(\rho^{11})^q
\,\big|\, G(z)=F(0)=0\big] \Big)^{1/q}
\cdot
\mathbb{E} \Big(  \mathrm{1}_{\jac F(0)<0}
\,\big|\, G(z)=F(0)=0\big] \Big)^{1/p}
\\
&\quad \leq C_p \cdot \rho^{11} \cdot \rho^{6/p},
\end{align}
where $C_p$ is a constant that depends on $p$. Choosing $p$ close to $1$ we conclude that
\begin{align}\label{wes6}
\mathbb{E} \big[\mathrm{1}_{\jac F(0)<0} \cdot \OrdoP(\rho^{11})
\,\big|\, G(z)=F(0)=0\big] = \Ordo(\rho^{16.5}).
\end{align}
\smallskip
\noindent {\bf Step 6}. Considering \eqref{wes1} and \eqref{wes6}, it remains to 
estimate the effect of replacing $\jac F(0)$ by the proxy variable
\begin{align}
    X =  |\partial^2 G(0)|^2 - \tfrac{|z|^6}{36} |\partial^3 G(0)|^2.
\end{align}
More precisely, we would like to show that
\begin{align}\label{wes7}
&\mathbb{E} \big[
\tfrac14 |z|^4 \cdot |\partial^3 G(0)|^2 \cdot | X | \cdot 
\big|
\mathrm{1}_{\jac F(0)<0}
-\mathrm{1}_{X<0}
\big|
\,\big|\, G(z)=F(0)=0
\big] = \Ordo(\rho^{16.5}).
\end{align}
Consider the event $B=\{\jac F(0)<0\} \cup \{X<0\}$. By Lemma \ref{lem:minmax1}, 
\begin{align}
\jac F(0) \cdot \mathrm{1}_B = X \cdot \mathrm{1}_B + \rho^7 \cdot E^z,
\end{align}
where $E^z=\OrdoP(1)$. If $\mathrm{1}_{\jac F(0)<0} \not= \mathrm{1}_{X<0}$, then $\mathrm{1}_B=1$ and $|X| \leq \rho^7 |E^z|$, so that $\jac F(0)$ and $X$ can have different signs. Hence, we can select $p>1$ and use \eqref{wes3a} and \eqref{eq_lplp} to estimate
\begin{align}
&\mathbb{E} \big[
\tfrac14 |z|^4 \cdot |\partial^3 G(0)|^2 \cdot | X | \cdot 
\big|
\mathrm{1}_{\jac F(0)<0}
-\mathrm{1}_{X<0}
\big|
\,\big|\, G(z)=F(0)=0
\big] 
\\
&\qquad
\leq 
\rho^7 \cdot \mathbb{E} \big[
\tfrac14 |z|^4 \cdot |\partial^3 G(0)|^2 \cdot | E^z | \cdot 
\big|
\mathrm{1}_{\jac F(0)<0}
-\mathrm{1}_{X<0}
\big|
\,\big|\, G(z)=F(0)=0
\big] 
\\
&\qquad
\lesssim 
\rho^{11} \cdot \mathbb{E} \big[ |\partial^3 G(0)|^2 \cdot | E^z | \cdot 
\big|
\mathrm{1}_{\jac F(0)<0}
-\mathrm{1}_{X<0}
\big|
\,\big|\, G(z)=F(0)=0
\big] 
\\
&\qquad
\lesssim \rho^{11} \cdot\Big(
\big(\mathbb{E} \big[
\mathrm{1}_{\jac F(0)<0}
\big|
\,\big|\, G(z)=F(0)=0
\big]\big)^{1/p}+
\big(\mathbb{E} \big[
\mathrm{1}_{X<0}
\big|
\,\big|\, G(z)=F(0)=0
\big]\big)^{1/p}\Big)
\\
&\qquad \lesssim \rho^{11+6/p},
\end{align}
where the implied constant depends on $p$. Selecting $p$ close to 1 yields \eqref{wes7}.
\end{proof}
We collect all the previous work and obtain the following result, showing remarkably strong repulsion between zeros of GEF and their critical points with negative index.
\begin{prop}\label{prop_zcm}
There exist constants $c, C > 0$ such that
\begin{align}\label{eq_zcmx}
c \rho^{20} \leq 
 \mathbb{E}[\Nz \cdot \Ncm] \leq C \rho^{20}, \quad 0 < \rho < 1.
\end{align}
\end{prop}
\begin{proof}
As in the proof of Proposition \ref{prop_zcp}, it is enough to prove the claim for small $\rho$. We start by analyzing the expression in Lemma \ref{lem_minmax2}. We observe (as was done in the proof of that lemma) that the conditional covariance of $\big( \partial^2G(0), \partial^3G(0)\big)$ given that $G(z)=F(0)=0$ is 
\[
\begin{pmatrix}
2 &0 \\ 
0 & 6 
\end{pmatrix}+ \Ordo(\rho). 
\]
Hence, if $\rho$ is sufficiently small and $(Z_1,Z_2)$ is a standard complex vector, we have
\begin{align}
&\E \big[ \big| \tfrac12 z^2 \partial^3 G(0) \big|^2 \cdot \big| |\partial^2 G(0)|^2 - \tfrac{1}{36} |z|^6  \cdot |\partial^3 G(0)|^2\big| \cdot \big|
\mathrm{1}_{|\partial^2 G(0)|^2 < \tfrac{1}{36} \cdot |z|^6 \cdot |\partial^3 G(0)|^2} \,\big|\, G(z)=F(0)=0 \big]
\\
&\quad \asymp \mathbb{E}\big[ \tfrac{|z|^4} {4} \cdot |Z_2|^2 \cdot \big||Z_1|^2 - \tfrac{|z|^6}{36} \cdot |Z_2|^2 \big| \cdot \mathrm{1}_{|Z_1| < \frac{|z|^3}{6} |Z_2| }\big]
\\
          &\quad = \frac{|z|^4}{4} \cdot \mathbb{E}\big[ |Z_2|^2 \cdot \big(\tfrac{|z|^6}{36} |Z_2|^2 - |Z_1|^2\big) \cdot \mathrm{1}_{|Z_1| < \frac{|z|^3}{6} |Z_2|} \big] 
          \\
            &\quad= \frac{|z|^4}{4} \cdot \int_{\mathbb{C}^2} |z_2|^2 \big(\tfrac{|z|^6}{36} |z_2|^2 -|z_1|^2\big) \cdot \mathrm{1}_{|z_1| < \frac{|z|^3}{6} |z_2|} \frac{1}{\pi^2}e^{-|z_1|^2 - |z_2|^2} dA(z_1) dA(z_2) \\
            &\quad={|z|^4} \cdot \int_0^\infty \int_0^{\frac{|z|^3}{6} k_2}  k_2^2 \big(\tfrac{|z|^6}{36} k_2^2- k_1^2\big) k_1 k_2 e^{-k_1^2 -k_2^2} dk_1 dk_2 \\
            &\quad= \frac{|z|^4 |z|^{12}(54+|z|^6)}{72(36 + |z|^6)^2} \asymp |z|^{16}, 
\end{align}
as follows from an explicit computation that we provide in Lemma \ref{lem_explicit_2}, or in \cite{hor25work}. (In the first $\asymp$ we used the homogeneity of the function in question; see the proof of Lemma \ref{conddens}.)

Combining this with Lemma \ref{lem_minmax2}, we conclude that, for sufficiently small $\rho$, 
\begin{align}
&\mathbb{E} \big[ |\partial G(z)|^2 \cdot |\jac F(0)| \cdot \mathrm{1}_{\jac F(0) <0 } \,\big|\, G(z)=F(0)=0 \big] \asymp |z|^{16} + \Ordo(\rho^{16.5}).
\end{align}
Since the integrand in \eqref{eq_krb} depends only on $z-w$ --- cf. Lemma \ref{lemmashift} --- we conclude that
\begin{align}
 \mathbb{E}[\Nz \cdot \Ncm] \asymp \int_{B_\rho \times B_\rho}
  \Big(|z-w|^{16} +\Ordo(\rho^{16.5}) \Big)dA(z)dA(w) \asymp \rho^{20},
\end{align}
as claimed
\end{proof}

\section{Proof of Theorem \ref{th_main}}\label{sec_proof}
We collect all partial results to prove Theorem \ref{th_main}.

$\eqref{eq_repz}$ follows from
\cite[Theorem 1.1]{MR2885614}, 
Prop. \ref{repc} gives \eqref{eq_repc}, Prop. \ref{repp} gives \eqref{eq_repp}, Prop. \ref{repm} gives \eqref{eq_repm}, Prop. \ref{prop_zcp} gives \eqref{eq_zcp} and Prop. \ref{prop_zcm} gives \eqref{eq_zcm}.

The first order statistics of the zeros of $F$ were studied in \cite{hkr22}.
Proposition 3.2 in \cite{hkr22} --- which is an application of \cite[Proposition 6.5]{level} --- shows that the zeros of $F$ are almost surely non-degenerate (invertible Jacobi matrix). Hence,
\begin{align}
\Nc = \Ncp + \Ncm.
\end{align}
We write
\begin{align}
    \Ncm \cdot \Ncp = \tfrac{1}{2} \big(\Nc \cdot  (\Nc -1) - \Ncp \cdot (\Ncp -1 ) - \Ncm \cdot (\Ncm -1)\big),
\end{align}
and use \eqref{eq_repp} and \eqref{eq_repm} to conclude that
\begin{align}
    &\mathbb{E}[\Ncp \cdot \Ncm] = \frac{1}{2} \mathbb{E}[\Nc\cdot(\Nc - 1)] + \Ordo(\rho^7).
\end{align}
In addition, as shown in \cite[Example 1.3, Theorem 1.8]{hkr22},
\begin{align}
\tfrac{4}{3}  \rho^2 = \mathbb{E}[\Ncp] \asymp \mathbb{E}[\Ncm] =
\tfrac{1}{3}  \rho^2 
\end{align} 
and \eqref{eq_cmcp} follows from \eqref{eq_repc} as $\mathbb{E}[\Nc] = \frac{5}{3} \rho^2$.

Finally $\eqref{eq_zc}$ follows from $\eqref{eq_zcp}$ and $\eqref{eq_zcm}$ as
\begin{equation*}
    \Nz \cdot \Nc = \Nz \cdot \Ncp + \Nz \cdot \Ncm.
\end{equation*}

\qed

\begin{rem} In the situation of Theorem \ref{th_main} we also have
\begin{align}
    \mathbb{E}[\Ncm \cdot \Nc] \asymp \rho^2 \qquad \mbox{and} \qquad
    \mathbb{E}[\Ncp \cdot \Nc] \asymp \rho^2.
\end{align}
Indeed,
this follows from \eqref{eq_repp}, \eqref{eq_repm} and $\eqref{eq_cmcp}$ as
\begin{equation*}
    \Ncm \cdot \Nc = \Ncm \cdot (\Ncm + \Ncp) = \Ncm \cdot (\Ncm - 1) + \Ncm + \Ncm \cdot \Ncp,
\end{equation*}
and
\begin{equation*}
    \Ncp \cdot \Nc = \Ncp \cdot (\Ncp - 1) + \Ncp + \Ncm \cdot \Ncp.
\end{equation*}
\end{rem}

\section{Appendix}\label{sec_app}
\subsection{Kac-Rice formulas}
The following lemma uses the Kac-Rice formulas to provide intensity functions for the statistics \eqref{eq_stats}. Since the involved vector fields may become singular (degenerate covariance), a regularization argument is needed, and we provide the details here.
\begin{lemma}\label{kacrice}
For $0 < \rho \leq 1$, \eqref{kr1}, \eqref{kr2}, \eqref{kr3}, \eqref{eq_kra} and \eqref{eq_krb} hold.   
\begin{proof}
    \textbf{Step 1:} We treat first \eqref{kr1}, \eqref{kr2} and \eqref{kr3}. Let
    \begin{equation*}
        Z(z, w) = (\text{Re}(F(z)), \mathrm{Im}(F(z)), \text{Re}(F(w)), \mathrm{Im}(F(w))).
    \end{equation*}
    Then $Z$ is a Gaussian vector on ${\overline{\mathbb{D}}}^2$, which is almost surely $C^\infty$ and, under the identification $\mathbb{C} \simeq \mathbb{R}^2$, has covariance kernel given by \eqref{eq_cF}. It follows that $Z$ non-degenerate except on the diagonal $\{(z, w) \in \overline{\mathbb{D}}^2 : z = w\}$, since the covariance matrix of  $(F(z), F(w))$ has determinant
    \begin{equation}\label{eq_det1}
        e^{|z|^2 +|w|^2} (1-e^{-|z-w|^2}(1-|z-w|^2)^2),
    \end{equation}
    which is non-zero for $z\neq w$,
    cf. Section \ref{sec_prelim}. As the probability density of $F(z)$ is bounded near $0$ uniformly on $z$, we also have
    \begin{align*}
         &\mathbb{P}(\exists t : Z(t) = 0 \text{ and } \jac Z(t) = 0) =0,
    \end{align*}
    see \cite[Proposition 6.5]{level} or \cite[Proposition 3.2]{hkr22}. 
    We can then invoke \cite[Theorem 6.4]{level} and learn that for every bounded continuous function $g: \mathbb{R}^4 \times \mathbb{R}^{4\times 4} \rightarrow \mathbb{R}$ and every compact set $E \subset \overline{\mathbb{D}}^2$ such that $E \cap \{(z, w) : z = w\} = \emptyset$ we have
    \begin{equation*}
        \mathbb{E} \Big[\sum_{t \in E : Z(t) = 0} g(Z(t), DZ(t)) \Big] = \int_{E} p_{Z(t)}(0) \,\mathbb{E}\big[g(Z(t), DZ(t)) |\jac Z(t)| \,\big|\, Z(t) = 0\big] dt,
    \end{equation*}
where $\pi^2 \cdot p_{Z(t)}(0)$ is the pointwise inverse of the determinant \eqref{eq_det1}. We note that $\jac Z(z,w)= \jac F(z) \jac F(w)$ and that
    \begin{equation*}
        \mathbb{E}[\Nc \cdot (\Nc -1)] = \mathbb{E}[\sum_{\substack{(z, w) \in B_\rho^2 \\F(z) = F(w) = 0}} 1 - \sum_{\substack{z \in B_\rho \\ F(z) = 0}} 1] = \mathbb{E}[ \sum_{\substack{(z, w) \in B_\rho^2 \backslash \{z= w\} \\ F(z) = F(w) = 0}} 1],
    \end{equation*}
    \begin{align*}
        \mathbb{E}[\Ncp \cdot (\Ncp -1 )] &= \mathbb{E}[\sum_{\substack{(z, w) \in B_\rho^2 \\F(z) = F(w) = 0}} \mathrm{1}_{\jac F(z) > 0} \mathrm{1}_{\jac F(w) > 0} -\sum_{\substack{z \in B_\rho \\ F(z) = 0}} \mathrm{1}_{\jac F(z) > 0}] \\
        &= \mathbb{E}[\sum_{\substack{(z, w) \in B_\rho^2 \backslash \{z= w\} \\ F(z) = F(w) = 0}} \mathrm{1}_{\jac F(z) > 0} \mathrm{1}_{\jac F(w) > 0}]
    \end{align*}
    and similarly
    \begin{equation*}
        \mathbb{E}[\Ncm \cdot (\Ncm -1)] = \mathbb{E}[\sum_{\substack{(z, w) \in B_\rho^2 \backslash \{z= w\} \\ F(z) = F(w) = 0}} \mathrm{1}_{\jac F(z) < 0} \mathrm{1}_{\jac F(w) < 0}].
    \end{equation*}
    Thus, with $\mathcal{N}_\rho$ one of $\Nc, \Ncp, \Ncm$, we have that
    \begin{equation*}
        \mathbb{E}[\mathcal{N}_\rho \cdot (\mathcal{N}_\rho-1)] = \mathbb{E}[\sum_{\substack{(z, w) \in B_\rho^2 \backslash \{z= w\} \\ F(z) = F(w) = 0}} g(Z(t), DZ(t))],
    \end{equation*}
    where $g$ is an adequate non-negative and bounded (but not necessarily continuous) real-valued function. We proceed as in the proof of \cite[Lemma 3.3]{hkr22}, approximating $B_\rho^2 \backslash \{z= w\}$ by compact sets and $g$ by continuous functions.
    Fix $0 < \delta < \rho \leq 1$, let $E_\delta = \overline{B_{\rho - \delta}^2} \backslash \{|z-w| < \delta\}$ and let $\varphi_n$ be a sequence of non negative, bounded and continuous functions such that $ \varphi_n \uparrow g$, defined noting that
    \begin{equation*}
        \begin{cases} 1 \text{ if } x > \frac{1}{n} \\
                        nx \text{ if } 0 \leq x \leq \frac{1}{n}\\
                        0 \text{ if } x < 0 \end{cases} \uparrow \mathrm{1}_{x > 0}
    \quad\mbox{and}\quad
        \begin{cases} 1 \text{ if } x < -\frac{1}{n} \\
                        -nx \text{ if } -\frac{1}{n} \leq x \leq 0\\
                        0 \text{ if } x > 0 \end{cases} \uparrow \mathrm{1}_{x < 0}.
    \end{equation*}
    Then for every $n$, as $E_\delta \subset \overline{\mathbb{D}}^2$ is a compact set such that $E_\delta \cap \{(z, w) : z = w\} = \emptyset$, 
    \begin{align*}
        &\mathbb{E}\Big[\sum_{\substack{(z, w) \in E_\delta, Z(z, w) = 0}} \varphi_n(Z(z,w), DZ(z,w))\Big] \\
        &\quad= \int_{E_\delta}  \mathbb{E}[\varphi_n(Z(z, w), DZ(z, w)) |\jac F(z)| |\jac F(w)| \,\big| \,F(z) = F(w) = 0] \\
        &\qquad\qquad\qquad\cdot p_{F(z), F(w)}(0, 0)\,dA(z) dA(w).
    \end{align*}
    By monotone convergence, we can let $n \rightarrow +\infty$ and conclude that the same formula holds with $g$ in lieu of $\varphi_n$. We subsequently let $\delta \downarrow 0$ and apply again monotone convergence. Noting that $E_\delta \uparrow B_\rho^2 \backslash \{z= w\}$ and that $\{(z, w) : z= w\}$ has measure zero, we obtain \eqref{kr1}, \eqref{kr2}, \eqref{kr3}.

\smallskip
    
    \noindent \textbf{Step 2}. We consider \eqref{eq_kra} and \eqref{eq_krb}. Let
    \begin{equation*}
        Z(z, w) = (\text{Re}(G(z)), \mathrm{Im}(G(z)), \text{Re}(F(w)), \mathrm{Im}(F(w))).
    \end{equation*}
    Then $Z(t): \overline{\mathbb{D}}^2 \rightarrow \mathbb{R}^4$ is Gaussian, $Z$ is almost surely $C^1$ and $Z(t)$ is non degenerate for all $t$, because the covariance matrix of $(G(z), F(w))$, given by \eqref{eq_cF_2}, has determinant
    \begin{equation}\label{eq_det2}
        e^{|z|^2 +|w|^2} (1- e^{-|z-w|^2} |z-w|^2),
    \end{equation}
    which is non-zero. In addition,
    \begin{align*}
         &\mathbb{P}(\exists t : Z(t) = 0 \text{ and } \jac Z(t) = 0) \\         
         &\leq \mathbb{P}(\exists z \in \overline{\mathbb{D}} : G(z) = 0 \text{ and } \jac G(z) = 0) + \mathbb{P}(\exists z \in \overline{\mathbb{D}} : F(z) = 0 \text{ and } \jac F(z) = 0) = 0,
    \end{align*}
    as the zeros of $F$ and $G$ are almost surely non-degenerate. We invoke \cite[Theorem 6.4]{level} to conclude: for every compact set $E \subset \overline{\mathbb{D}}^2$ and every bounded continuous function $g: \mathbb{R}^4 \times \mathbb{R}^{4 \times 4} \rightarrow \mathbb{R}$ we have
    \begin{equation*}
        \mathbb{E}\Big[\sum_{t \in E, Z(t) = 0} g(Z(t), DZ(t))\Big] = \int_{E} p_{Z(t)}(0) \mathbb{E}\big[g(Z(t), DZ(t)) | \jac Z(t)| \,\big|\, Z(t) =0 \big]\, dt,
    \end{equation*}
    where $\jac Z(t)=\jac G(z)\jac F(w)=|\partial G(z)|^2 \jac F(w)$ and $\pi^2 \cdot p_{Z(t)}(0)$ is the pointwise inverse of the determinant \eqref{eq_det2}. We note that
    \begin{equation*}
        \mathbb{E}[\Nz \cdot \Ncp] = \mathbb{E}\Big[\sum_{\substack{(z, w) \in B_\rho^2 \\ G(z) = F(w) = 0}} \mathrm{1}_{\jac F(w) > 0}\Big] \quad\mbox{and}\quad
        \mathbb{E}[\Nz \cdot \Ncm] = \mathbb{E}\Big[\sum_{\substack{(z, w) \in B_\rho^2 \\ G(z) = F(w) = 0}} \mathrm{1}_{\jac F(w) < 0}\Big],
    \end{equation*}
    let $E_\delta = \overline{B_{\rho-\delta}}^2$, $0 < \delta \leq \rho$, and let $\varphi_n^\pm$ be non-negative bounded continuous functions such that
    \begin{equation*}
        \varphi_n^+(Z(z, w), DZ(z, w)) \uparrow \mathrm{1}_{\jac F(w) > 0} = g^+(Z(z,w), DZ(z,w)),
    \end{equation*}
    \begin{equation*}
        \varphi_n^-(Z(z, w), DZ(z, w)) \uparrow \mathrm{1}_{\jac F(w) < 0} = g^-(Z(z,w), DZ(z, w)).
    \end{equation*}
    Just as in Step 1, we apply monotone convergence, first as $n \rightarrow +\infty$ and then as $\delta \downarrow 0$, to obtain \eqref{eq_kra} and \eqref{eq_krb}.    
\end{proof}
\end{lemma}
\begin{lemma}[Translation invariance of certain statistics]\label{lemmashift} 
Consider the following intensity functions, defined for $z \neq w \in \mathbb{C}$:
\begin{align}
&q^c(z,w)=\frac{\mathbb{E}\big[|\jac F(z) \cdot \jac F(w)|\big| F(z) = F(w)= 0\big]}{e^{|z|^2 + |w|^2} \big(1- e^{-|z-w|^2}(1 - |z-w|^2)^2\big)  }\,,\\
&q^{c,+}(z,w)=\frac{\mathbb{E}\big[|\jac F(z) \cdot \jac F(w)|\mathrm{1}_{\jac F(z) > 0} \mathrm{1}_{\jac F(w) > 0} \big| F(z) = F(w)= 0\big]}{e^{|z|^2 + |w|^2} \big(1- e^{-|z-w|^2}(1 - |z-w|^2)^2\big)  }\,,\\
        &q^{c,-}(z,w)=\frac{\mathbb{E}\big[|\jac F(z) \cdot \jac F(w)|\mathrm{1}_{\jac F(z) < 0} \mathrm{1}_{\jac F(w) < 0} \big| F(z) = F(w)= 0\big]}{e^{|z|^2 + |w|^2} \big(1- e^{-|z-w|^2}(1 - |z-w|^2)^2\big)  }\,,\\
 &q^{z,c^+}(z,w)=\frac{\mathbb{E}\big[|\jac G(z) \cdot \jac F(w)| \mathrm{1}_{\jac F(w) > 0} \big| G(z) = F(w) = 0\big]}{e^{|z|^2 + |w|^2}\big(1- e^{-|z-w|^2} |z-w|^2\big)}\,, \\
        &q^{z,c^-}(z,w)=\frac{\mathbb{E}\big[|\jac G(z) \cdot \jac F(w)| \mathrm{1}_{\jac F(w) < 0} \big| G(z) = F(w) = 0\big]}{e^{|z|^2 + |w|^2}\big(1- e^{-|z-w|^2} |z-w|^2\big)}\,.
    \end{align}
Then $q^c(z,w)$, $q^{c,+}(z,w)$ and $q^{c,-}(z,w)$ depend on $|z-w|$ while $q^{z,c^+}(z,w)$ and $q^{z,c^-}(z,w)$ depend on $z-w$.
\end{lemma}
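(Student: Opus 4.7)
The plan is to exploit two stochastic symmetries of the pair $(G,F)$: the Bargmann--Fock shifts \eqref{eq_zeta}, which will give dependence on $z-w$ for all five kernels, and the rotational invariance $G(e^{i\theta}\,\cdot\,)\stackrel{(d)}{=} G$, which upgrades this to dependence on $|z-w|$ for the three kernels $q^c,q^{c,+},q^{c,-}$.

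For translation invariance, I would first record how the shift acts on Jacobians. Writing $F_\zeta(z)=h_\zeta(z)F(z-\zeta)$ with $h_\zeta(z):=e^{-|\zeta|^2/2+z\bar\zeta}$ a non-vanishing entire function, at any zero $z_0$ of $F_\zeta$ (equivalently $F(z_0-\zeta)=0$) the Leibniz rule collapses to $DF_\zeta(z_0)=h_\zeta(z_0)\cdot DF(z_0-\zeta)$; since multiplication by a nonzero complex scalar corresponds to a $2\times 2$ real matrix of determinant equal to the squared modulus, this yields
$$
\jac F_\zeta(z_0)=|h_\zeta(z_0)|^2\cdot\jac F(z_0-\zeta),
$$
with signs matching. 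The identical identity holds for $G$. Invoking $(G,F)\stackrel{(d)}{=}(G_\zeta,F_\zeta)$ and evaluating at $(z+\zeta,w+\zeta)$, each conditional-expectation numerator in the five kernels picks up the multiplicative factor $|h_\zeta(z+\zeta)|^2|h_\zeta(w+\zeta)|^2$. The short algebraic identity $|h_\zeta(u)|^2e^{-|u|^2}=e^{-|u-\zeta|^2}$ shows that this factor precisely cancels the change in the $e^{|z|^2+|w|^2}$ denominator term, while the remaining $|z-w|$-dependent denominator factor is trivially $\zeta$-invariant. The positivity of $|h_\zeta|^2$ preserves all sign indicators in $q^{c,\pm}$ and $q^{z,c^\pm}$. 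Setting $\zeta=-w$ then gives $q^{\bullet}(z,w)=q^{\bullet}(z-w,0)$, establishing the $(z-w)$-dependence of every kernel.

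For $q^c,q^{c,\pm}$, I would next use the rotational symmetry $G\circ R_\theta\stackrel{(d)}{=} G$ with $R_\theta(z)=e^{i\theta}z$ (immediate from the covariance kernel $e^{z\bar w}$). Setting $\tilde G=G\circ R_\theta$ and $\tilde F=\bar\partial^*\tilde G$, a chain-rule calculation yields
$$
F(e^{i\theta}z)=e^{-i\theta}\tilde F(z),\qquad \jac F(e^{i\theta}z)=\jac\tilde F(z),
$$
the second identity because $D(F\circ R_\theta)(z)=DF(e^{i\theta}z)\cdot R_\theta$ with $\det R_\theta=1$, and because multiplying the complex-valued field by the unit-modulus scalar $e^{-i\theta}$ leaves its (real) Jacobian unchanged. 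Since $\tilde F\stackrel{(d)}{=} F$, the joint law of $(F,\jac F)$ at $(e^{i\theta}z,e^{i\theta}w)$ equals that at $(z,w)$ up to a unitary prefactor on the $F$-coordinates, which leaves both the conditioning event $\{F=0\}$ and the signed Jacobians intact. The denominators are manifestly rotation invariant, so the three kernels $q^c,q^{c,\pm}$ are invariant under $(z,w)\mapsto(e^{i\theta}z,e^{i\theta}w)$. Combining with translation invariance, this promotes the $(z-w)$-dependence to $|z-w|$-dependence.

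The main obstacle is purely the bookkeeping that the Jacobian scale factor $|h_\zeta(z+\zeta)|^2|h_\zeta(w+\zeta)|^2$ in the numerators cancels exactly the modified quadratic exponents $e^{|z+\zeta|^2+|w+\zeta|^2}$ in the denominators; once the identity $|h_\zeta(u)|^2e^{-|u|^2}=e^{-|u-\zeta|^2}$ is in hand, everything else follows from standard invariance manipulations.
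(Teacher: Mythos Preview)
Your proposal is correct and follows essentially the same approach as the paper: both use the Bargmann--Fock shift symmetry \eqref{eq_zeta} together with the Jacobian scaling $\jac F_\zeta(z)=|h_\zeta(z)|^2\jac F(z-\zeta)$ at zeros to obtain translation invariance, and then rotational invariance of the law of $F$ to upgrade to $|z-w|$-dependence for $q^c,q^{c,\pm}$. Your packaging of the cancellation via the identity $|h_\zeta(u)|^2e^{-|u|^2}=e^{-|u-\zeta|^2}$ and your derivation of the rotational symmetry through $\tilde G=G\circ R_\theta$ (rather than directly citing the covariance kernel \eqref{eq_cF} of $F$ as the paper does) are minor presentational differences, not substantive ones.
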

\begin{proof}
We use the fundamental symmetry \eqref{eq_zeta}, and note that, for any $\zeta \in \mathbb{C}$, conditionally on $F_\zeta(z) = 0$,
    \begin{align}
         F_\zeta^{(1, 0)}(z)  = e^{-\frac{|\zeta|^2}{2} + z \overline{\zeta}} F^{(1, 0)}(z-\zeta),
         \\
         F_\zeta^{(0, 1)}(z)  = e^{-\frac{|\zeta|^2}{2} + z \overline{\zeta}} F^{(0, 1)}(z-\zeta),
    \end{align}
    while, conditionally on $F_\zeta(z) = F_\zeta(w) = 0$,
    \begin{align}
        \jac F_\zeta(z) = e^{-|\zeta|^2 + 2\text{Re}(z \overline{\zeta})} \jac F(z-\zeta),
        \\
        \jac F_\zeta(w) = e^{-|\zeta|^2 + 2\text{Re}(w \overline{\zeta})} \jac F(w-\zeta),
    \end{align}
    and $F_\zeta(z) = F_\zeta(w) = 0$ iff $F(z-\zeta) = F(w-\zeta) = 0$. Thus,
    \begin{align*}
        &q^c(z,w)= \frac{\mathbb{E}\big[|\jac F_\zeta(z) \cdot \jac F_\zeta(w)|\big| F_\zeta(z) = F_\zeta(w)= 0\big]}{e^{|z|^2 + |w|^2} \big(1- e^{-|z-w|^2}(1 - |z-w|^2)^2\big)  } \\
        &\quad= \frac{e^{-2|\zeta|^2 +2 \text{Re}(z\overline{\zeta}) + 2 \text{Re}(w\overline{\zeta})}}{e^{|z|^2 + |w|^2}} \frac{\mathbb{E}\big[|\jac F(z-\zeta) \jac F(w-\zeta)| \big| F(z-\zeta) =F(w-\zeta) = 0\big]}{1-e^{-|z-w|^2} (1-|z-w|^2)^2}
        \\
        &\quad= \frac{1}{e^{|z-\zeta|^2 + |w-\zeta|^2}} \frac{\mathbb{E}\big[|\jac F(z-\zeta) \jac F(w-\zeta)| \big| F(z-\zeta) =F(w-\zeta) = 0\big]}{1-e^{-|z-w|^2} (1-|z-w|^2)^2}
        \\
        &\quad=q^c(z-\zeta,w-\zeta).
    \end{align*}
Hence $q^{c}(z,w)$ depends only on $z-w$. Similar conclusions follow for $q^{c,+}$ and $q^{c,-}$, after noting that, conditionally on $F(z-\zeta) = F(w-\zeta) = 0$,
\[\mathrm{sgn}(\jac F_\zeta(z))=\mathrm{sgn}(\jac F(z-\zeta)).\]
The argument for $q^{z,c^+}(z,w)$ and $q^{z,c^-}(z,w)$ is completely analogous. In addition, inspection of \eqref{eq_cF} shows that the stochastics of $F$ are also invariant under rotations: $F(\cdot) \stackrel{(d)}{=} F(e^{i\theta} \cdot)$. Hence, $q^{c}(z,w)$, $q^{c,+}$ and $q^{c,-}$ depend only on $|z-w|$.
\end{proof}

\begin{lemma} \label{lemmared}
Let $\sigma: \mathbb{R}^+ \rightarrow \mathbb{R}^+$ be measurable. Then  
\begin{equation*}
    \rho^2 \int_{0}^{\rho/2} \sigma(r) r dr
\lesssim 
\int_{B^2_\rho} \sigma(|z-w|) dA(z) dA(w) \lesssim \rho^2 \int_{0}^{2\rho} \sigma(r) r dr.
\end{equation*}
\end{lemma}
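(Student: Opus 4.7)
The plan is to fix $w\in B_\rho$, perform the change of variables $u=z-w$ in the inner integral, and then pass to polar coordinates, after which both bounds follow from elementary inclusions between disks.

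More concretely, Fubini and the substitution $z\mapsto u=z-w$ give
\begin{align}
\int_{B_\rho^2} \sigma(|z-w|)\, dA(z)\, dA(w) = \int_{B_\rho}\!\!\left(\int_{B_\rho-w} \sigma(|u|)\, dA(u)\right) dA(w),
\end{align}
where $B_\rho-w=\{u\in\mathbb{C}:|u+w|<\rho\}$. For the upper bound I would use that $|u+w|<\rho$ and $|w|<\rho$ imply $|u|<2\rho$, so $B_\rho-w\subseteq B_{2\rho}$; the inner integral is then dominated by $\int_{B_{2\rho}}\sigma(|u|)\,dA(u)=2\pi\int_0^{2\rho}\sigma(r)\,r\,dr$, and multiplying by $|B_\rho|=\pi\rho^2$ yields the right-hand inequality.

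For the lower bound I would restrict the outer integration to $w\in B_{\rho/2}$. For such $w$ and $|u|<\rho/2$ we have $|u+w|\le |u|+|w|<\rho$, so $B_{\rho/2}\subseteq B_\rho-w$. Hence
\begin{align}
\int_{B_\rho^2}\sigma(|z-w|)\,dA(z)\,dA(w) \ge \int_{B_{\rho/2}}\!\!\left(\int_{B_{\rho/2}}\sigma(|u|)\,dA(u)\right) dA(w) = \tfrac{\pi^2\rho^2}{2}\int_0^{\rho/2}\sigma(r)\,r\,dr,
\end{align}
giving the left-hand inequality with an absolute constant.

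There is no real obstacle here: the only thing to watch is that the disk $B_\rho-w$ is contained in $B_{2\rho}$ and contains $B_{\rho/2}$ whenever $|w|<\rho/2$, and these are immediate from the triangle inequality. The whole argument is essentially a quantitative version of the standard fact that convolution with the indicator of $B_\rho$ against itself is supported in $B_{2\rho}$ and bounded below on $B_{\rho/2}$.
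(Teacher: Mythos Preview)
Your argument is correct and essentially the same as the paper's: both proofs perform the substitution $u=z-w$ and reduce to polar coordinates via elementary disk inclusions. The only cosmetic difference is that the paper swaps the order of integration and bounds the intersection measure $|B_\rho(0)\cap B_\rho(u)|$ above by $\pi\rho^2$ and below (for $|u|\le\rho/2$) by $\rho^2|B_1(0)\cap B_1(1/2)|$, whereas you bypass this by restricting $w$ to $B_{\rho/2}$ directly; the resulting constants differ but the idea is identical.
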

\begin{proof}
    Note first that
    \begin{align*}
        &\int_{B_\rho^2} \sigma(|z-w|) dA(w) dA(w) = \int_{B_\rho} \int_{B_\rho(z)} \sigma(|u|)dA(u) dA(z) \\
        &\quad =\int_{\mathbb{C}^2}  \sigma(|u|) \mathrm{1}_{z \in B_\rho} \mathrm{1}_{u \in B_\rho(z)} dA(z) dA(u)
        = \int_{B_{2\rho}} \sigma(|u|) \int_{\mathbb{C}} \mathrm{1}_{z \in B_\rho} \mathrm{1}_{z \in B_\rho(u)} dA(z) dA(u)
    \end{align*}
    and that
    \begin{equation*}
        \int_{\mathbb{C}} \mathrm{1}_{z \in B_\rho} \mathrm{1}_{z \in B_\rho(u)} dA(z) = |B_\rho(0) \cap B_\rho(u)|
    \end{equation*}
    depends only on $|u|$, and it decreases as $|u|$ increases. Hence, for all $u \in \mathbb{C}$,
    \begin{equation*}
        \int_{\mathbb{C}} \mathrm{1}_{z \in B_\rho} \mathrm{1}_{u \in B_\rho(z)} dA(z) \leq \pi \rho^2,
    \end{equation*}
    while, for $|u| \leq \frac{\rho}{2}$,
    \begin{equation*}
        \int_{\mathbb{C}} \mathrm{1}_{z \in B_\rho} \mathrm{1}_{u \in B_\rho(z)} dA(z) \geq |B_\rho(0) \cap B_\rho(\tfrac{\rho}{2})| = \rho^2 |B_1(0) \cap B_1(\tfrac{1}{2})|.
    \end{equation*}
    Since $\sigma(r) \geq 0$, we conclude that, for all $r$,
    \begin{equation*}
        \rho^2 |B_1(0) \cap B_1(\tfrac{1}{2})| \int_{B_{\frac{\rho}{2}}} \sigma(|u|) dA(u)
        \leq 
        \int_{B^2_\rho} \sigma(|z-w|) dA(z) dA(w) \leq \pi \rho^2 \int_{B_{2\rho}} \sigma(|u|) dA(u).
    \end{equation*}
    As 
    \begin{equation*}
        \int_{B_{\frac{\rho}{2}}} \sigma(|u|) dA(u) = 2\pi \int_0^{\frac{\rho}{2}} \sigma(r)r dr \qquad \mbox{and} \qquad \int_{B_{2\rho}} \sigma(|u|) dA(u) = 2\pi   \int_0^{2\rho} \sigma(r) r dr, 
    \end{equation*}    
    the result follows.
\end{proof}

\subsection{An estimate for Gaussian vectors}
The following lemma elaborates on \cite[Lemma 6]{ladgham2023local}.
\begin{lemma}\label{lemappxdirect}
    Let $Z_1, Z_2, Z_3$ be centered independent complex random vectors with common variance $\sigma>0$ and let $0 \leq \eta < 2$. Then there exists a constant $C_{\sigma,\eta}>0$ such that for $r > 0$: 
    \begin{align}\label{eq_pl1}
        \mathbb{P}(|\mathrm{Im}(Z_2 \overline{Z_1})|< r) \leq C_{\sigma,\eta}
        r (1+|\log(r)|),
    \end{align}
    \begin{align}\label{eq_pl2}        \mathbb{P}\big(|\mathrm{Im}\big(Z_2 \overline{ Z_1}\big) +r \mathrm{Im}\big(i |Z_2|^2 + \tfrac{1}{3} Z_3 \overline{ Z_1}\big)| < r^{2-\eta}\big) \leq C_{\sigma,\eta} r^{2-\eta} (1+|\log(r^{2-\eta})|),
    \end{align}
    \begin{align}\label{eq_pl3}
        \mathbb{P}\big(|\mathrm{Im}\big(Z_2 \overline{ Z_1}\big) -r \mathrm{Im}\big(i |Z_2|^2 - \tfrac{1}{3} Z_3 \overline{ Z_1}\big)| < r^{2-\eta}\big) \leq C_{\sigma,\eta} r^{2-\eta} (1+|\log(r^{2-\eta})|).
    \end{align}
\end{lemma}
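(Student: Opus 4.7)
All three inequalities reduce to a single density estimate. For $V \in \mathbb{C}\setminus\{0\}$ and a centered complex Gaussian $Z$ of variance $\sigma$, rotational invariance gives that $\mathrm{Im}(V\overline{Z})$ is a centered real Gaussian of variance $\tfrac{\sigma}{2}|V|^2$, so its density is uniformly bounded by $C_\sigma/|V|$. Therefore, for every real $c$ and every $t>0$,
\[
\mathbb{P}\bigl(\,|\mathrm{Im}(V\overline{Z})+c|<t\,\bigr) \;\leq\; \min\!\bigl(1,\; 2C_\sigma t/|V|\bigr),
\]
and crucially the bound is independent of the shift $c$. This is the only analytic input we need.

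\textbf{Proof of \eqref{eq_pl1}.} First I would condition on $Z_1$. The conditional probability that $|\mathrm{Im}(Z_2\overline{Z_1})|<r$ given $Z_1=z_1$ is $\leq \min(1, 2C_\sigma r/|z_1|)$. Integrating against the density of $Z_1$ and splitting the region into $\{|z_1|<r\}$ and $\{|z_1|\geq r\}$ gives contributions $O(r^2)$ and $O(r)$ respectively, which together are absorbed by $C_{\sigma,\eta}\,r(1+|\log r|)$.

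\textbf{Proof of \eqref{eq_pl2} and \eqref{eq_pl3}.} The key identity is $\mathrm{Im}(i|Z_2|^2)=|Z_2|^2$, which lets us rewrite both expressions under the absolute value as
\[
\mathrm{Im}\!\bigl(W\overline{Z_1}\bigr) \pm r|Z_2|^2, \qquad W := Z_2 + \tfrac{r}{3}Z_3,
\]
the sign depending on the case. I would then condition on $(Z_2,Z_3)$, so that the shift $\pm r|Z_2|^2$ is deterministic and the density estimate (applied with $Z=Z_1$, $V=W$, $t=r^{2-\eta}$) yields the conditional bound $\min(1, 2C_\sigma r^{2-\eta}/|W|)$, \emph{independent of the shift}. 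Taking expectation, one uses that $W$ is itself a centered complex Gaussian with variance $\sigma(1+r^2/9)$, which stays between two positive constants for $r\in(0,1)$; consequently, the density of $W$ is bounded uniformly in $r$, and splitting at $|W|=r^{2-\eta}$ as above yields $O(r^{4-2\eta})+O(r^{2-\eta})$. For $r$ bounded away from zero (say $r^{2-\eta}\geq 1$) the claimed bound is trivial; in the small-$r$ regime the previous estimate is absorbed by $r^{2-\eta}(1+|\log r^{2-\eta}|)$.

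\textbf{Main obstacle.} There is no serious obstacle. The only delicate points are (i) recognizing that the purely imaginary contribution $i|Z_2|^2$ can be absorbed as a deterministic shift once $Z_2$ is in the conditioning, so that the density-based bound is unaffected by it, and (ii) verifying that the variance of $W=Z_2+(r/3)Z_3$ is $\asymp \sigma$ uniformly in $r\in(0,1)$, which is what keeps the implicit constants independent of $r$ throughout the integration step.
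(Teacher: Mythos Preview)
Your proof is correct and in fact yields a sharper bound than the stated lemma (no logarithmic factor is needed). It also takes a genuinely different route from the paper's own argument, so a brief comparison is in order.

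The paper imports from \cite[Lemma~6]{ladgham2023local} the estimate
\[
\mathbb{P}\bigl(|(W_1-a)(W_2-b)+c|<r\bigr)\leq Cr(1+|\log r|)
\]
for i.i.d.\ real Gaussians $W_1,W_2$, and then reduces each of the three probabilities to this form. For \eqref{eq_pl1} this is immediate after writing $\mathrm{Im}(Z_2\overline{Z_1})=X_1Y_2-X_2Y_1$ and conditioning on $(X_2,Y_1)$. For \eqref{eq_pl2} and \eqref{eq_pl3} the paper performs an algebraic factorization in real coordinates, e.g.\ rewriting the expression as $(Y_2+\tfrac{r}{3}Y_3)(X_1+rY_2-\tfrac{r^2}{3}Y_3)+(\text{rest})$, and then compares the joint density of $(X_1+rY_2,Y_2)$ to that of an i.i.d.\ pair via a uniform eigenvalue bound on the covariance.

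Your approach sidesteps all of this. Conditioning on the \emph{complex} variable that plays the role of the ``coefficient'' (namely $Z_1$ in \eqref{eq_pl1}, and $(Z_2,Z_3)$ in \eqref{eq_pl2}--\eqref{eq_pl3}) turns $\mathrm{Im}(V\overline{Z})$ into a one-dimensional real Gaussian with bounded density $\lesssim |V|^{-1}$, and the shift $\pm r|Z_2|^2$ becomes irrelevant. The remaining integral $\mathbb{E}[\min(1,Ct/|V|)]$ is $O(t)$ because the radial density of $V$ vanishes linearly at the origin, killing the logarithmic divergence that appears in the real-product lemma. The rewriting $\mathrm{Im}(Z_2\overline{Z_1})+\tfrac{r}{3}\mathrm{Im}(Z_3\overline{Z_1})=\mathrm{Im}(W\overline{Z_1})$ with $W=Z_2+\tfrac{r}{3}Z_3$ is a nice simplification over the paper's real-variable factorization, and the observation that $\mathrm{Var}(W)=\sigma(1+r^2/9)\asymp\sigma$ is exactly what is needed for uniformity in $r$.

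In short: the paper's argument is modular (it leans on a black-box lemma about real products) at the cost of a spurious $\log$ and a slightly heavier reduction; yours is self-contained, shorter, and gives the clean bound $O(t)$.
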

\begin{proof}
Since the right-hand sides of \eqref{eq_pl1}, \eqref{eq_pl2} and \eqref{eq_pl3} are $\gtrsim 1$ as soon as $r$ is bounded away from $0$, we can focus on small $r$. We will use the following fact, which is part of the proof of \cite[Lemma 6]{ladgham2023local}: If $W_1, W_2$ are i.i.d. non-constant real Gaussian random variables, then there exists a constant $C>0$, depending on $\var(W_1)$, such that, for all $a, b, c \in \mathbb{R}$ and all $r > 0$,
    \begin{equation}\label{eq_pl4}
        \mathbb{P}(|(W_1 - a)(W_2 - b) + c| < r) \leq Cr(1+|\log(r)|).
    \end{equation} 
    
    Write $Z_j = X_j + i Y_j$ and consider first
    \begin{align}
        \mathrm{Im}(Z_2\overline{Z_1}) = X_1 Y_2 - X_2 Y_1.
    \end{align}
    Since $(X_1, Y_2)$ are i.i.d. non-constant Gaussian random variables independent of $(X_2, Y_1)$, \eqref{eq_pl1} follows from \eqref{eq_pl4} 
    with $W_1=X_1, W_2=Y_2$, $a=b=0$ and $c$= a realization of $X_2Y_1$. More precisely:
      \begin{align*}
         \mathbb{P}(|\mathrm{Im}(Z_2 \overline{Z_1})|< r) &= \mathbb{E}\big[\mathbb{P}(|X_1 Y_2 - X_2 Y_1|< r\, \big|\,X_2, Y_1)\big] \\
         &\lesssim \mathbb{E}\big[r(1+|\log(r)|)\big] = r(1+|\log(r)|).
    \end{align*}
    Second, we consider
    \begin{align}
        &\mathrm{Im}\big(Z_2 \overline{ Z_1}\big) +r \mathrm{Im}\big(i |Z_2|^2 + \tfrac{1}{3} Z_3 \overline{ Z_1}\big) \\
        &= X_1 Y_2 +  r Y_2^2 - X_2 Y_1 + r X_2^2 + \tfrac{r}{3} X_1 Y_3 - \tfrac{r}{3} X_3 Y_1 \\
        &= (Y_2 + \tfrac{r}{3} Y_3 ) (X_1+ r Y_2-\tfrac{r^2}{3} Y_3) + (\tfrac{r^3}{9} Y_3^2 - X_2 Y_1 + r X_2^2  - \tfrac{r}{3} X_3 Y_1).
    \end{align}
We note that $(X_1 + r Y_2, Y_2)$ is centered (zero expectation) and has covariance matrix 
\begin{equation}\label{eq_pms}
        \sigma \begin{pmatrix}
            \frac{1}{2} + \frac{r^2}{2} & \frac{r}{2} \\
            \frac{r}{2} & \frac{1}{2}
        \end{pmatrix}.
    \end{equation}
    Hence, for sufficiently small $r>0$, the eigenvalues $\lambda$ of \eqref{eq_pms} satisfy $\sigma/4 < \lambda < \sigma$ and the probability density of 
$(X_1 + r Y_2, Y_2)$, denoted $f_r$, satisfies $f_r \lesssim g$, where $g$ is the density of $(W_1, W_2) \sim \mathcal{N}_{\mathbb{C}}(0,\sigma^2)$. Thus,
   \begin{align*}
       & \mathbb{P}\big(|\mathrm{Im}\big(Z_2 \overline{ Z_1}\big) +r \mathrm{Im}\big(i |Z_2|^2 + \tfrac{1}{3} Z_3 \overline{ Z_1}\big)| < r^{2-\eta}\big) \lesssim \mathbb{P}\big(|(W_2 - \tfrac{r^2}{3} Y_3)(W_1 + \tfrac{r}{3} Y_3) + Z| < r^{2-\eta}\big),
    \end{align*}
    where $Z=\tfrac{r^3}{9} Y_3^2 - X_2 Y_1 + r X_2^2  - \tfrac{r}{3} X_3 Y_1$ and $Y_3$ are independent of $(W_1,W_2)$. Hence, as before, we can apply \eqref{eq_pl4}, after conditioning to $Y_3, Z$ to get \eqref{eq_pl2}.

    Lastly, we consider
    \begin{align*}
        &\mathrm{Im}(Z_2 \overline{Z_1} - r \mathrm{Im}(i |Z_2|^2 - \tfrac{1}{3} Z_3 \overline{Z_1}) \\
        &= X_1 Y_2 - r Y_2^2 - X_2 Y_1  - r X_2^2 + \tfrac{r}{3} X_1 Y_3 - \tfrac{r}{3} X_3 Y_1 \\
        &= (Y_2 + \tfrac{r}{3} Y_3)(X_1 - r Y_2 + \tfrac{r^2}{3} Y_3) + (-\tfrac{r^3}{9} Y_3^2 - X_2 Y_1 - r X_2^2 -\tfrac{r}{3} X_3 Y_1).
    \end{align*}
    The vector $(X_1 - r Y_2, Y_2)$ is independent of $(X_2, X_3, Y_1, Y_3)$, it is  centered and has covariance matrix given by
    \begin{equation*}
        \sigma \begin{pmatrix}
            \frac{1+r^2}{2} & -\frac{r}{2} \\
            -\frac{r}{2} & \frac{1}{2}
        \end{pmatrix},
    \end{equation*}
    which has, for sufficiently small $r$, eigenvalues $\lambda$ satisfying
    $\sigma/4 < \lambda < \sigma$. Hence, \eqref{eq_pl3} follows with the same argument used for \eqref{eq_pl2}.
\end{proof}    

\subsection{Computation of covariances}
The following computations can also be followed with the symbolic notebook \cite{hor25work}.

\begin{lemma}\label{matrixcomp}
The covariance kernel of $F$ is given by \eqref{eq_cF}, the covariance of $F(z)$ and $G(w)$ is given by \eqref{eq_cF_2},
and, for $r \geq 0$, the covariance matrix of the vector \[(F(ir), F(-ir), F^{(1, 0)}(0), F^{(0, 2)}(0), F^{(0, 3)}(0))\] is given by \eqref{eq_mtotr}.
\end{lemma}
\begin{proof}
Since $G$ is almost surely smooth and has a smooth covariance, we can compute correlations between derivatives of $G$ by exchanging differentiation and expectation in \eqref{eq_kergef}, see, e.g., \cite[Chapter 1]{level}. Recalling that $F(z) = \overline{z} G(z) - \partial G(z)$ we get
\begin{align}
        \mathbb{E}[F(z)\overline{G(w)}] &= \overline{z} \mathbb{E}[G(z)\overline{G(w)}] - \partial_z \mathbb{E}[G(z)\overline{G(w)}]
        = \overline{z} e^{z\overline{w}} - \overline{w} e^{z \overline{w}}.
\end{align}
Hence \eqref{eq_cF_2} holds. Similarly,
    \begin{align}
        \mathbb{E}[F(z)\overline{F(w)}] &= \overline{z}w \mathbb{E}[G(z)\overline{G(w)}] - \overline{z} \mathbb{E}[G(z)\overline{\partial G(w)}] - w \mathbb{E}[\partial G(z)\overline{G(w)}] + \mathbb{E}[\partial G(z)\overline{\partial G(w)}] \\
        &= (\overline{z} w - \overline{z} \overline{\partial}_w - w \partial_z + \partial_z \overline{\partial}_w) e^{z\overline{w}} = (\overline{z}w - z\overline{z} -w \overline{w} + 1 + z\overline{w}) e^{z\overline{w}} \\
        &= (1-|z-w|^2) e^{z\overline{w}}.
    \end{align}
    Hence \eqref{eq_cF} holds and we can compute
    \begin{align}
        \mathbb{E}[F(ir) \overline{F(ir)}]= \mathbb{E}[F(-ir)\overline{F(-ir)}] = e^{r^2},
    \end{align}
    and
    \begin{align}
        \mathbb{E}[F(ir)\overline{F(-ir)}] = (1-4r^2) e^{-r^2}
    \end{align}
    from which \eqref{eq_m1r} follows.
    We next write
    \begin{align}
        F(z) = (x- i y) G(x+ i y)  - \partial G(x+ i y), \qquad z = x + iy
    \end{align}
    and use the analyticity of $G$ to compute
    \begin{align}
        F^{(1, 0)}(z) = G(x + i y) + \overline{z} \partial G(z) - \partial^2 G(x+ i y),
    \end{align}
    \begin{align}
        F^{(0 ,2)}(z) &= \partial_y^2 ((x- i y) G(x+ i y) - \partial G(x+ i y)) \\    
        &=\partial_y( - i G(x + i y) + (x - i y) i \partial G(x + i y) - i\partial^2 G(x + i y)) \\
        &= 2 \partial G(x + i y)  - (x - i y) \partial^2  G(x + i y) + \partial^3 G(x + i y),
    \end{align}
    and
    \begin{align}
        &F^{(0, 3)}(z) = \partial_y F^{(0, 2)}(z) 
        \\
        &\quad= 2 i \partial^2 G(x + i y) + i \partial^2 G(x + i y)  - (x- i y)i \partial^3 G(x + i y)  + i \partial^4 G(x + i y).
    \end{align}
    In particular,
    \begin{align}
        F^{(1, 0)}(0) = G(0) - \partial^2 G(0),
    \end{align}
    \begin{align}
        F^{(0, 2)}(0) = 2 \partial G(0) + \partial^3 G(0),
    \end{align}
    and
    \begin{align}
        F^{(0, 3)}(0) = 3i \partial^2 G(0) + i \partial^4 G(0).
    \end{align}
    Replacing $F(ir) = -i rG(ir) - \partial G(ir)$, $r \in \mathbb{R}$, in the equations above gives
    \begin{align}
        F(ir)\overline{F^{(1, 0)}(0)} &= (- ir G(ir) - \partial G(ir)) \overline{(G(0) - \partial^2 G(0))} \\
        &= - ir G(ir) \overline{G(0)} - \partial G(ir) \overline{G(0)} + ir G(ir) \overline{\partial^2 G(0)} + \partial G(ir)\overline{\partial^2 G(0)},
    \end{align}
    \begin{align}
        F(ir)\overline{F^{(0, 2)}(0)} &= (-ir G(ir) - \partial G(ir))\overline{(2\partial G(0)+\partial^3 G(0))} \\
        &= - 2 ir G(ir) \overline{\partial G(0)} - 2\partial G(ir)\overline{\partial G(0)} - ir G(ir) \overline{\partial^3 G(0)} - \partial G(ir) \overline{\partial^3 G(0)}, 
    \end{align}
    and
    \begin{align}
        F(ir) \overline{F^{(0, 3)}(0)} &= (-ir G(ir) - \partial G(ir)) \overline{(3i \partial^2 G(0) +i \partial^4 G(0))} \\
        &= -3r G(ir) \overline{\partial^2 G(0)}  + 3i \partial G(ir) \overline{\partial^2 G(0)} - r G(ir) \overline{\partial^4 G(0)} + i \partial G(ir) \overline{\partial^4 G(0)}.
    \end{align}
    We note that
    \begin{align}
        \mathbb{E}[G(z) \overline{\partial^k G(0)}] = \overline{\partial}_w^k e^{z \overline{w}}|_{w = 0} =  z^k
    \end{align}
    and
    \begin{align}
        \mathbb{E}[\partial G(z) \overline{\partial^k G(0)}] =  \partial_z \overline{\partial}_w^k e^{z\overline{w}} \big|_{w = 0}= (k z^{k-1} e^{z\overline{w}} + z^k \overline{w} e^{z\overline{w}})\big|_{w = 0} = k z^{k-1},
    \end{align}
    where, for $k=0$, we interpret $k z^{k-1}=0$ for all $z\in \mathbb{C}$. We use these expressions to compute
    \begin{align}
        \mathbb{E}[F(ir) \overline{F^{(1, 0)}(0)}] = - ir  -0 + (ir)^3 + 2(ir) = ir - i r^3,
    \end{align}
    \begin{align}
        \mathbb{E}[F(ir) \overline{F^{0, 2}(0)}]  = - 2ir (ir) - 2 - ir (ir)^3 - 3 (ir)^2 = -2 + 5 r^2 - r^4
    \end{align}
    and
    \begin{align}
        \mathbb{E}[F(ir) \overline{F^{(0, 3)}(0)}] = - 3r (ir)^2 + 3i 2(ir) - r (ir)^4 +i 4 (ir)^3 = -6r + 7 r^3 - r^5,
    \end{align}
    from which \eqref{eq_m2r} follows.

Finally, we recall that $\big(G(0), \partial G(0), \tfrac{1}{\sqrt{2!}} \partial^2 G(0), \tfrac{1}{\sqrt{3!}} \partial^3 G(0) \big)$ is a standard complex vector and compute
    \begin{align}
        &\mathbb{E}[F^{(1, 0)}(0) \overline{F^{(1, 0)}(0)}] = 1 + 2! = 3, 
        \qquad
        &\mathbb{E}[F^{(1, 0)}(0) \overline{F^{(0, 2)}(0)}] = 0,
        \\
        &\mathbb{E}[F^{(1, 0)}(0) \overline{F^{(0, 3)}(0)}] = 3i\cdot 2! =  6i,
        \qquad
        &\mathbb{E}[F^{(0, 2)}(0) \overline{F^{(0, 2)}(0)}]=  4 + 3! = 10,
        \\
        &\mathbb{E}[F^{(0, 2)}(0)\overline{F^{(0 ,3)}(0)}] = 0,
    \qquad
        &\mathbb{E}[F^{(0 ,3)}(0) \overline{F^{(0 ,3)}(0)}] = 9 \cdot 2! + 4! = 42,
    \end{align}
    from which \eqref{eq_m3r} follows.
\end{proof}
\subsection{Some integrals}
The following elementary computations can also be followed with the symbolic notebook \cite{hor25work}.

\begin{lemma}\label{lem_explicit}
    \begin{align}
            \frac{1}{\pi^2} \int_{\mathbb{C}^2} \mathrm{1}_{|z_1| \leq r |z_2|}  e^{-|z_1|^2} e^{-|z_2|^2} dA(z_1) dA(z_2) = \frac{r^2}{1+r^2}.
        \end{align}
\end{lemma}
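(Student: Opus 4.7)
The plan is to reduce the two complex integrals to one-dimensional radial integrals and then evaluate them by elementary means. Since the integrand depends only on $|z_1|$ and $|z_2|$, I would first switch to polar coordinates $z_j = t_j e^{i\theta_j}$ with $t_j \geq 0$, $\theta_j \in [0,2\pi)$. The two angular integrals each contribute a factor of $2\pi$, cancelling the $\pi^{-2}$ prefactor up to a factor of $4$, leaving
\begin{align}
\frac{1}{\pi^2} \int_{\mathbb{C}^2} \mathrm{1}_{|z_1| \leq r |z_2|}  e^{-|z_1|^2 - |z_2|^2} dA(z_1) dA(z_2)
= 4 \int_0^\infty \int_0^{r t_2} t_1 t_2 \, e^{-t_1^2 - t_2^2} \, dt_1 \, dt_2.
\end{align}

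Next, I would compute the inner integral in $t_1$ via the antiderivative $-\tfrac{1}{2} e^{-t_1^2}$, obtaining
\begin{align}
\int_0^{rt_2} t_1 e^{-t_1^2}\, dt_1 = \tfrac{1}{2}\bigl(1 - e^{-r^2 t_2^2}\bigr),
\end{align}
so the expression reduces to $2\int_0^\infty t_2 e^{-t_2^2}\,dt_2 - 2\int_0^\infty t_2 e^{-(1+r^2)t_2^2}\,dt_2$.

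Finally, both remaining integrals are standard: the first equals $1$ and the second equals $1/(1+r^2)$. Subtracting gives $1 - \frac{1}{1+r^2} = \frac{r^2}{1+r^2}$, which is the claim. There is no real obstacle here; the computation is entirely elementary, and the only care needed is to track the Jacobian factors $t_1 t_2$ and the normalization $\pi^{-2}$ when passing to polar coordinates.
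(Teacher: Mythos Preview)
Your proof is correct and follows essentially the same approach as the paper's own proof: convert to polar coordinates, evaluate the inner radial integral via the antiderivative $-\tfrac{1}{2}e^{-t_1^2}$, and then compute the remaining one-dimensional integral. The only cosmetic difference is that the paper writes down a single antiderivative for the final $t_2$-integral, whereas you split it into two standard Gaussian-type integrals; the computations are otherwise identical.
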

\begin{proof}
    We compute directly
    \begin{align*}
        &\int_{\mathbb{C}^2} \mathrm{1}_{|z_1| \leq r |z_2|} \frac{1}{\pi^2} e^{-|z_1|^2} e^{-|z_2|^2} dA(z_1) dA(z_2)= 4\pi^2 \int_{0}^\infty \int_0^{r k_2} \frac{1}{\pi^2} e^{-k_1^2} e^{-k_2^2} k_1 k_2 dk_1 dk_2  \\
        &\qquad= 4 \int_{0}^\infty k_2 e^{-k_2^2} \int_0^{r k_2} k_1 e^{-k_1^2} dk_1 dk_2
        = 4 \int_0^\infty k_2 e^{-k_2^2} \big[-\frac{1}{2} e^{-k_1^2}\big]_0^{r k_2} dk_2 \\
        &\qquad= 2\int_0^\infty k_2 e^{-k_2^2} (1- e^{- r^2 k_2^2}) dk_2
        = 2\Big[\frac{1}{2} \frac{e^{-k_2^2}(-1+e^{-k_2^2 r^2} - r^2)}{1+r^2}\Big]_0^\infty = \frac{r^2}{1+r^2}.\qedhere
         \end{align*}
\end{proof}
    \begin{lemma}\label{lem_explicit_2}
        \begin{align*}
            \int_{\mathbb{C}^2} |z_2|^2 \big(\tfrac{|z|^6}{36} |z_2|^2 -|z_1|^2\big) \mathrm{1}_{|z_1| < \tfrac{|z|^3}{6} |z_2|} \tfrac{1}{\pi^2}e^{-|z_1|^2 - |z_2|^2} dA(z_1) dA(z_2) = \frac{|z|^{12}(54+|z|^6)}{18(36 + |z|^6)^2}.
        \end{align*}
    \end{lemma}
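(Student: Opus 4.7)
The plan is to carry out a direct computation in polar coordinates, reducing the two-dimensional complex integral to a one-dimensional integral in $k_2 = |z_2|$ which can be evaluated using elementary Gaussian moments. To streamline notation I would set $a = |z|^6/36$ and $b = |z|^3/6$, so that $a = b^2$, and observe that the integrand depends only on $|z_1|$ and $|z_2|$. Writing $z_j = k_j e^{i\theta_j}$ with $dA(z_j) = k_j\,dk_j\,d\theta_j$, the angular integrations each produce a factor $2\pi$, and combining with the $1/\pi^2$ out front yields
\begin{equation*}
I := 4\int_0^\infty \int_0^{b k_2} k_2^{3}\,(a k_2^2 - k_1^2)\,k_1\, e^{-k_1^2-k_2^2}\,dk_1\,dk_2.
\end{equation*}

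Next I would evaluate the inner $k_1$ integral by the substitution $u = k_1^2$, which turns it into $\tfrac12\int_0^{b^2 k_2^2}(ak_2^2-u)e^{-u}\,du$. Using the elementary antiderivatives of $e^{-u}$ and $u e^{-u}$ and exploiting the identity $a=b^2$, the inner integral simplifies to $\tfrac12\bigl[b^2 k_2^2 - 1 + e^{-b^2 k_2^2}\bigr]$. Substituting back yields
\begin{equation*}
I = 2\int_0^\infty k_2^{3}\,e^{-k_2^2}\bigl[b^2 k_2^2 - 1 + e^{-b^2 k_2^2}\bigr]\,dk_2.
\end{equation*}

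I would then split this into three Gaussian integrals and use the standard formula $\int_0^\infty k^{2n+1} e^{-\alpha k^2}\,dk = n!/(2\alpha^{n+1})$. Concretely,
\begin{equation*}
\int_0^\infty k_2^{5} e^{-k_2^2}dk_2 = 1,\qquad \int_0^\infty k_2^{3} e^{-k_2^2}dk_2 = \tfrac12,\qquad \int_0^\infty k_2^{3} e^{-(1+b^2)k_2^2}dk_2 = \tfrac{1}{2(1+b^2)^2}.
\end{equation*}
Combining these gives $I = 2b^2 - 1 + (1+b^2)^{-2}$, and putting everything over $(1+b^2)^2$ and expanding yields $I = b^4(3+2b^2)/(1+b^2)^2$.

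The final step is pure algebra: inserting $b^2 = |z|^6/36$ and clearing denominators by multiplying numerator and denominator by $1296 = 36^2$ gives the claimed $|z|^{12}(54+|z|^6)/\bigl[18(36+|z|^6)^2\bigr]$. There is no real obstacle here; the only thing requiring care is keeping track of the constants $18$, $36$, and $1296$ through the polynomial simplification $b^4(3+2b^2) = |z|^{12}(54+|z|^6)/(18\cdot 1296)$ against $(1+b^2)^2 = (36+|z|^6)^2/1296$.
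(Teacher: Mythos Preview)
Your proof is correct and follows essentially the same route as the paper: pass to polar coordinates, evaluate the inner $k_1$-integral (you use the substitution $u=k_1^2$, the paper writes down an antiderivative directly), and then reduce the remaining $k_2$-integral to the three standard Gaussian moments $\int_0^\infty k^3 e^{-k^2}dk$, $\int_0^\infty k^5 e^{-k^2}dk$, $\int_0^\infty k^3 e^{-(1+b^2)k^2}dk$. The abbreviations $a=b^2=|z|^6/36$ and the clean intermediate form $I=b^4(3+2b^2)/(1+b^2)^2$ make the final bookkeeping slightly tidier than in the paper, but the argument is the same.
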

    \begin{proof}
        We note that
        \begin{align*}
            &\int_{\mathbb{C}^2} |z_2|^2 (\frac{|z|^6}{36} |z_2|^2 -|z_1|^2) \mathrm{1}_{|z_1| < \frac{|z|^3}{6} |z_2|} \frac{1}{\pi^2}e^{-|z_1|^2 - |z_2|^2} dA(z_1) dA(z_2) \\
            &= 4\pi^2 \int_0^\infty \int_0^{\frac{|z|^3}{6} k_2} k_2^2 (\frac{|z|^6}{36} k_2^2 - k_1^2) \frac{1}{\pi^2} e^{-k_1^2 - k_2^2} k_1 k_2 dk_1 dk_2 \\
            &=4\int_0^\infty k_2^3 e^{-k_2^2} \int_0^{\frac{|z|^3}{6} k_2} (\frac{|z|^6}{36} k_2^2 - k_1^2) e^{-k_1^2} k_1dk_1 dk_2, 
        \end{align*}
        and compute
        \begin{equation*}
            \int_0^{\frac{|z|^3}{6} k_2} (\frac{|z|^6}{36} k_2^2 - k_1^2) e^{-k_1^2} k_1  dk_1 = [\frac{1}{2} k_2 e^{-k_1^2}(1+k_1^2 - \frac{|z|^6}{36} k_2^2) ]_0^{\frac{|z|^3}{6} k_2} = \frac{1}{2} e^{-\frac{|z|^6}{36} k_2^2} - \frac{1}{2} + \frac{|z|^6}{72} k_2^2.
        \end{equation*}
        It remains to compute
        \begin{equation*}
            \int_0^\infty k_2^3 e^{-k_2^2} (\frac{1}{2} e^{-\frac{|z|^6}{36} k_2^2} - \frac{1}{2} + \frac{|z|^6}{72} k_2^2) dk_2.
        \end{equation*}
        Noting that
        \begin{equation*}
            \int_0^{\infty} k_2^3 e^{-(1+ \frac{|z|^6}{36}) k_2^2} dk_2 = [-e^{-(1+ \frac{|z|^6}{36}) y^2}\frac{1+(1+ \frac{|z|^6}{36}) k_2^2}{2 (1+ \frac{|z|^6}{36})^2}]_0^\infty = \frac{1}{2(1+ \frac{|z|^6}{36})^2},
        \end{equation*}
        \begin{equation*}
            \int_0^{\infty} k_2^3 e^{-k_2^2} dx = [-\frac{1}{2} e^{-k_2^2} (1+k_2^2)]_0^\infty = \frac{1}{2},
        \end{equation*}
        \begin{equation*}
            \int_0^\infty k_2^5 e^{-k_2^2} dk_2 = [-\frac{1}{2}e^{-k_2^2}(2+2k_2^2+k_2^4)]_0^\infty = 1,
        \end{equation*}
        we get
        \begin{align*}
            \int_0^\infty k_2^3 e^{-k_2^2} \int_0^{\frac{|z|^3}{6} k_2} (\frac{|z|^6}{36} k_2^2 - k_1^2) e^{-k_1^2} k_1dk_1 dk_2  &= \int_0^\infty k_2^3 e^{-k_2^2} (\frac{1}{2} e^{-\frac{|z|^6}{36} k_2^2} - \frac{1}{2} + \frac{|z|^6}{72} k_2^2) dk_2 \\
            &= \frac{1}{4(1+\frac{|z|^6}{36})^2} - \frac{1}{4} + \frac{|z|^6}{72} = \frac{|z|^{12}(54 +|z|^6)}{72(36 + |z|^6)^2},
        \end{align*}
        and the result follows.
    \end{proof}

\end{document}